\newtheorem{notation}{Notation}[section]
\newtheorem{example}{Example}[section]
\setlist[enumerate]{leftmargin=.5in}
\setlist[itemize]{leftmargin=.5in}
\crefname{hypothesis}{Hypothesis}{Hypotheses}
\title{Harmonic persistent homology}
\author{Saugata Basu\thanks{Department of Mathematics, Purdue University, 150 N. University St., West Lafayette IN, USA 
  (\email{sbasu@purdue.edu}, \url{https://www.math.purdue.edu/\~sbasu/}).}
\and Nathanael Cox\thanks{Department of Mathematics, Purdue University, 150 N. University St., West Lafayette IN, USA 
  (\email{cox175@purdue.edu}).}}
\newcommand{\im}{\mathrm{Im}}
\newcommand{\ds}{\displaystyle}
\DeclareMathAlphabet{\mathpzc}{OT1}{pzc}{m}{it}
\newcommand{\sqbracket}[1]{[#1]}
\newcommand {\junk}[1]{}
\newcommand {\R} {\mathbb{R}}
\newcommand {\Z}  {\mathbb{Z}}
\newcommand {\la}   {{\langle}}
\newcommand {\ra}   {{\rangle}}
\newcommand {\dist} {{\rm dist}}
\newcommand {\Ker}      {\mathrm{Ker}}
\newcommand {\PP}     {\mathbb{P}} 
\newcommand {\spanof} {\mathrm{span}}
\newcommand{\card}{\mathrm{card}}
\newcommand{\rank}{\mathrm{rank}}
\def\addots{\mathinner{\mkern1mu
\raise1pt\vbox{\kern7pt\hbox{.}}
\mkern2mu\raise4pt\hbox{.}\mkern2mu
\raise7pt\hbox{.}\mkern1mu}}
\newcommand{\HH}  {\mbox{\rm H}}
\newcommand{\image}{\mathrm{Im}}
\newcommand{\hide}[1]{}
\newcommand{\supp}{\mathrm{supp}}
\newcommand{\nc}{\newcommand}
\newcommand{\rc}{\renewcommand}
\nc{\mc}{\mathcal}
\rc{\t}{\text}
\nc{\op}[1]{\operatorname{#1}}
\nc{\opcat}[1]{\mathbf{#1}}
\nc{\id}{\op{id}}
\nc{\umutnote}[1]{{\marginpar{\small \textcolor{blue}{#1}}}}
\nc{\cA}{\mc{A}}\nc{\cB}{\mc{B}}\nc{\cC}{\mc{C}}\nc{\cD}{\mc{D}}\nc{\cE}{\mc{E}}\nc{\cF}{\mc{F}}\nc{\cG}{\mc{G}}\nc{\cH}{\mc{H}}\nc{\cI}{\mc{I}}\nc{\cJ}{\mc{J}}\nc{\cK}{\mc{K}}\nc{\cL}{\mc{L}}\nc{\cM}{\mc{M}}\nc{\cN}{\mc{N}}\nc{\cO}{\mc{O}}\nc{\cP}{\mc{P}}\nc{\cQ}{\mc{Q}}\nc{\cR}{\mc{R}}\nc{\cS}{\mc{S}}\nc{\cT}{\mc{T}}\nc{\cU}{\mc{U}}\nc{\cV}{\mc{V}}\nc{\cW}{\mc{W}}\nc{\cX}{\mc{X}}\nc{\cY}{\mc{Y}}\nc{\cZ}{\mc{Z}}
\rc{\PP}{\mathbb{P}}
\rc{\AA}{\mathbb{A}}
\nc{\bbC}{\mathbb{C}}
\nc{\CC}{\mathbb{C}}
\nc{\code}[1]{{\texttt{#1}}}
\nc{\mcode}[1]{{\text{\texttt{#1}}}}
\nc{\xto}[1]{\raisebox{-0.03cm}{\scalebox{0.85}{$\,\xrightarrow{#1}\,$}}}
\nc{\xtonormal}[1]{\xrightarrow{#1}}
\nc{\xfrom}[1]{\xleftarrow{#1}}
\nc{\sidenote}[1]{\marginpar{\small #1}}
\nc{\Aff}{\opcat{Aff}}
\nc{\AffVar}{\opcat{AffVar}}
\nc{\ProjVar}{\opcat{ProjVar}}
\nc{\GAP}{\opcat{GrAlgPairs}}
\nc{\GA}{\opcat{GrAlg}}
\nc{\acc}{\mathrm{a.c.c}}
\nc{\GL}{\mathrm{GL}}
\nc{\Mod}{\t{-}\opcat{Mod}}
\nc{\Sub}{\opcat{Sub}}
\nc{\iso}{\cong}
\nc{\compose}{\circ}
\newcommand{\FF}{\mathcal{F}}
\newcommand{\h}{\mathcal{H}}
\newcommand{\spn}{\mathrm{span}}
\newcommand{\proj}{\mathrm{proj}}
\newcommand{\bp}{\begin{proof}}
\newcommand{\ep}{\end{proof}}
\newcommand{\Gr}{\mathrm{Gr}}
\newcommand{\persistent}{\mathrm{persistent}}
\newcommand{\Rep}{\mathrm{Rep}}
\newcommand{\econt}{\mathrm{content}}
\newcommand\restr[2]{{
  \left.\kern-\nulldelimiterspace 
  #1 
  \vphantom{\big|} 
  \right|_{#2} 
  }}
\begin{document}

\setlength{\abovedisplayskip}{1.5pt}
\setlength{\belowdisplayskip}{1.5pt}

\maketitle




\begin{abstract}
We introduce harmonic persistent homology spaces for filtrations of finite simplicial complexes.
As a result we can associate concrete subspaces of cycles to each bar of the barcode of the filtration.
We prove stability of the harmonic persistent homology subspaces, as well as the subspaces associated
to the bars of the barcodes,
under small perturbations of functions
defining them. We relate
the notion of ``essential simplices'' introduced in an earlier work to identify simplices which play 
a significant role in the birth of a bar, with that of harmonic persistent homology.
We prove that the harmonic representatives of simple bars maximizes the ``relative essential content''
amongst all representatives of the bar, where the relative essential content is the weight a particular 
cycle puts on the set of essential simplices. 
\footnote{An extended abstract of the paper appeared in the Proceedings of the IEEE Symposium on the Foundations of Computer Science, 2021.}
\end{abstract}
\begin{keywords}
Harmonic homology, persistent homology, barcodes, essential simplices, stability
\end{keywords}

\begin{AMS}
  55N31
\end{AMS}
\section{Introduction}
\label{sec:intro}
The main topic of this paper concerns the theory of \emph{persistent homology} which is a central object of interest in the burgeoning field of \emph{topological data analysis}.

\subsection{Background on persistent homology}
We begin with some motivation behind the introduction of persistent homology.
One way that simplicial complexes arise in topological data analysis is via the  Čech (or its closely related cousin, the Vietoris-Rips) complex \cite[pp. 60-61]{Edelsbrunner-Harer2010} 
associated to a point set. Let $X$ be a (finite) subset of some metric space which for concreteness let us assume to be  $\R^d$ (with its Euclidean metric).  In practice, $X$ may consist of a finite set of points  (often  called ``point-cloud data'')
which approximates some subspace or sub-manifold $M$ of $\R^d$. The topology (in particular, the homology groups) of the manifold $M$ is not reflected in the set
of points $X$ (which is a discrete topological space under the subspace topology induced from that of $\R^d$). 
Now for $r \geq 0$, let $X_r$ denote the union of closed Euclidean balls,
$B(x,r)$, of radius $r$ centered at the points $x \in X$. In particular, $X_0 = X$.
Also, for $0 \leq r \leq r'$, we have that $X_r \subset X_{r'}$. Thus,
$(X_r)_{r \in \R_{\geq 0}}$ is an increasing family of topological spaces
indexed by $r \geq 0$. This is an example of a (continuous) filtration of topological spaces. 
For each $r \geq 0$, we can associate a finite simplicial complex $K_r$ --
the  \emph{nerve complex} of the tuple of balls $(B(x,r))_{x \in X}$.
Informally, the simplicial complex $K_r$ has vertices indexed by the set
$X$, and for each subset $X'$ of $X$ of cardinality $p+1$, we include the 
$p$-dimensional simplex spanned by the vertex set corresponding to $X'$ if and only if 
\[
\bigcap_{x \in X'} B(x,r) \neq \emptyset.
\]
It is a basic result in 
algebraic topology (the ``nerve lemma'') that 
the simplicial homology groups, $\HH_*(K_r)$, are isomorphic to 
the  (say singular) homology groups, $\HH_*(X_r)$, of $X$. 
More precisely, the nerve lemma states that the geometric realization
$|K_r|$ is homotopy equivalent to (in fact, is a deformation retract of)  $X_r$ (homotopy equivalent spaces have isomorphic homology groups).
Observe that for $r \leq r'$, 
$K_r$ is a sub-simplicial complex of $K_{r'}$, and since there are only 
finitely many simplicial complexes on $\card(X)$-many vertices, there
are finitely many distinct simplicial complexes in the tuple $(K_r)_{r \geq 0}$. Thus, we obtain 
a finite  nested sequence $\mathcal{F}$ of simplicial complexes, 
$K_0 \subset K_{r_1} \subset K_{r_2} \subset \cdots \subset K_{r_n}$
in which each complex is a subcomplex of the next.
We will refer to $\mathcal{F}$ as a finite \emph{filtration} of 
simplicial complexes.

Let us return to the picture of the point-cloud $X$ approximating an underlying manifold $M$. The homology of the manifold is captured (by virtue of the nerve lemma) by the simplicial homology groups of the various simplicial complexes occurring in the finite filtration $\mathcal{F}$.
However, this correspondence is not bijective. As one can easily visualize, as $r$ starts growing from $0$, there are many spurious homology classes that are born and quickly die off (i.e. the corresponding holes are filled in) and these have nothing to do with the topology of $M$. Persistent homology is a tool that can be used to separate this ``noise'' from the  bona fide homology classes of $M$. The persistent homology of the filtration $\mathcal{F}$ is encoded as a set of intervals (called bars) in the barcode of the filtration $\mathcal{F}$ (see Definition~\ref{def:barcode2})).
Intervals (bars) of short length corresponds to noise, while the ones which
are long (persistent) reflect the homology of the underlying manifold $M$.
The barcode of the filtration associated to $X$ can be used as a feature of $X$ for learning or comparison purposes. In particular, the barcodes of two 
finite sets $X, X'$,  which are ``close'' as finite metric spaces, are themselves close under an appropriately defined notion of distance between barcodes. Such results (called stability theorems)  form the theoretical basis of practical applications of persistent homology, and we will state
new stability theorems later on in the paper.

\subsection{Associating cycles to bars}
As mentioned previously, the output of a persistent homology computation is often displayed as a ``barcode'' 
(see Example~\ref{eg:essential} and 
Figure~\ref{fig:essential} for an illustration). 
The barcode is considered an important invariant 
of the given filtration (or of the underlying metric space giving rise to the filtration).
We will define
precisely barcodes of filtrations later in the paper (see Definition~\ref{def:barcode2}).
For the moment it will suffice to note that the individual ``bars''  in the barcode of a filtration have some intuitive topological meaning (as explained above in the context of point-cloud data). They correspond
loosely speaking to the lifetime of homology classes appearing in the homology of the 
simplicial complexes that appear in the filtration (here we are thinking of the ordered index set of the filtration
as time). However, a new homology class that is ``born'' at a certain time  is defined only modulo a certain subspace in the homology of the complex at that time  -- thus identifying
a bar with a particular homology class is problematic.

Often in practice there is a demand to associate not just a homology class, but a specific 
\emph{cycle from the chain group representing this class} or at least a \emph{set of simplices} to each bar. 
This is because in applications
the simplices of the simplicial complexes of a filtration themselves  often have special significance. For instance, the vertices of a given simplicial complex could be labelled by
genes and a $p$-simplex $\sigma = (g_0,\ldots,g_p)$ may signify positive correlation between the genes 
$g_0,\ldots,g_p$ (say in causing a certain disease).
As an example, in \cite{lockwood2014}, the authors associate bars with representative cycles to determine how topological features correlate with genes that are associated with cancer biogenesis.

There have been several approaches to the problem of
associating specific cycle representatives to persistent homology classes.
Most of these approaches involve minimization of some weight on the space of cycles representing a homology class.
For instance,
volume-optimal cycles were proposed in the non-persistent setting in \cite{Dey-2010} and in the setting of persistent homology in \cite{volume}. Volume-optimal cycles are cycles of a homology class with the fewest number of simplices, and they can be found as solutions to a linear programming optimization problem \cite{Dey-2010}. 
In Dey et al.\ \cite{onecycles}
the authors give a polynomial time algorithm for computing an optimal representative for a given finite bar (interval) in the $p$-th persistence diagram of a filtration of a simplicial complex which is a weak $(p+1)$-pseudo-manifold.
Algorithms for computing such optimal cycles have been implemented -- see for instance \cite{Li-et-al}.
A different approach for selecting a representative cycle can be found in \cite{tracking}. The authors obtain a representative cycle by tracking when the addition or removal of a simplex causes a class to be born or die. 

In this paper we describe a new
approach based on the theory of \emph{harmonic chains}. We consider homology groups with coefficients in $\R$ and impose an inner product on the chain
group to make the chain groups an Euclidean space.
As a result we are able to identify the various persistent 
homology groups, as well as the bars in the barcode of the given filtration, as subspaces of the  
simplicial chain groups themselves. 
Note that in contrast with ordinary persistent homology theory, we are able to associate canonically (only depending on the chosen inner product) a certain subspace
of the chain space to each bar. When the bar is of multiplicity one
(see Definition~\ref{def:barcode2}), 
this subspace is spanned by a single vector, and we have a uniquely defined (up to scalar multiplication)
cycle representing the bar -- we call such a cycle a \emph{harmonic representative} of the bar. 
Intuitively, instead of selecting a representative cycle of the smallest
possible weight (as in \cite{Dey-2010,volume}), which might in fact omit
some simplices altogether, the harmonic representative
of a bar will tend to produce an ``average'' representative.

There are several reasons to consider harmonic representatives.
Instead of trying to optimize the length of the cycle the harmonic representatives 
put more relative weight on certain important simplices.
Since as remarked earlier, the simplices themselves in the simplicial complex underlying the filtration often have
domain dependent meaning --  if a particular simplex shows up with non-zero coefficient in \emph{every} cycle representing the homology class, then this fact may be considered
significant from an application point of view
(the lengths of representative cycles are not so significant
in these applications).
This idea was formalized in \cite{essential} where the notion
of \emph{essential} simplices corresponding to the bars of a barcode was introduced. 
Informally, a simplex is essential relative to a bar, if it occurs with a non-zero coefficient in \emph{every}  cycle
representing the bar. 
We generalize this notion and associate a set of essential simplices to any simple bar
(bars having multiplicity one -- see Definition~\ref{def:essential}). The harmonic representative of a 
bar will maximize amongst all representative cycles the relative weight of the essential simplices
(see Section~\ref{subsec:essential}). 

A second and perhaps more important justification of considering 
harmonic persistent homology is that the harmonic persistent homology  more accurately 
reflects the ``geometry'' of the filtrations on labelled simplicial complexes. 
We prove stability results  (see Section~\ref{subsec:stability}) -- 
harmonic persistent homology subspaces of simplicial filtrations which
are close will be close  (in a technical sense to be defined later) as elements of certain Grassmannians. 
In addition,
filtrations whose harmonic persistent homology are close as subspaces  will also have the harmonic representatives 
of their bars which are close (angle between corresponding subspaces will be small). 
Thus, bar diagrams augmented with the harmonic representatives of the bars is potentially 
a stronger signature of the data in some applications (see Section~\ref{subsec:application}).

\subsection{Harmonic and essential}
\label{subsec:essential}
We establish an important connection between the harmonicity of a representative
cycle and the set of essential simplices of a bar.
If a bar in the barcode of a filtration
is of multiplicity one (this happens generically), then it is represented by a unique harmonic 
representative (unique up to multiplication by non-zero scalar). 
We define for each cycle,  
\[
z = \sum_{\sigma} c_\sigma \cdot \sigma,
\]
(not necessarily harmonic) representing any given simple bar the relative
essential content,
\[
\econt(z) = \left(\frac{\sum_{\sigma \mbox{ is essential}}  c_\sigma^2}{\sum_{\sigma} c_\sigma^2}\right)^{1/2},
\]
of the cycle (see Definition~\ref{def:content} for the precise definition)
which measures the relative weight in the cycle of the essential as opposed to the 
non-essential simplices.

\subsubsection{Our Result}
\label{subsubsec:essential}
We prove  (Theorem~\ref{thm:essential}) that the harmonic 
representatives of bars maximize (amongst all  representative cycles)
the relative essential content of the bar (see Definition~\ref{def:content}), 
i.e.
if $z_0$ is a harmonic representative of a simple bar $b$,
then for any cycle $z$  representing $b$,
\[
\econt(z) \leq \econt(z_0).
\]

This indicates that in applications where one would like to emphasize the role of essential simplices,  harmonic representatives of bars are preferable over (say) volume optimal ones mentioned above.

\subsection{Example}
\label{subsec:Example}
Before proceeding further we discuss an example which illustrates the notion
of harmonic represesentatives and essential simplices. 
A bar $b$ in the barcode of a filtration is described by a triple $(s,t,\mu)$,
where $s$ denotes the birth time, $t$ the death time and $\mu$ the multiplicity 
(see Definition~\ref{def:barcode2}).

We denote by
$\mathcal{P}^{s,t}_p(\FF)$ the $p$-dimensional persistent harmonic homology subspace of the chain space
$C_p(K)$
corresponding to the bar born at time $s$ and which dies at time $t$ (or never dies if $t = \infty$).

We denote by $\Sigma(b)$ the set of essential simplices associated to a simple bar $b$.

\begin{example}
\label{eg:essential}

 \begin{figure}
    \centering
    \includegraphics[scale=0.65]{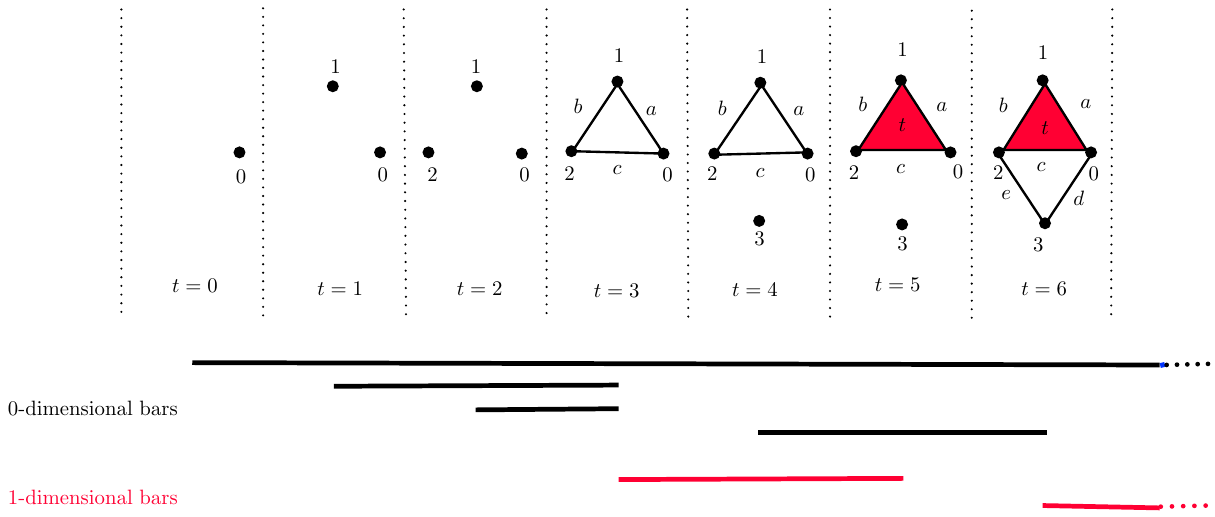}
    \caption{Barcode of a filtration}
    \label{fig:essential}
\end{figure}
Let $K$ be the simplicial complex defined by:
\begin{eqnarray*}
K^{\sqbracket{0}} &=& \{[0],[1],[2],[3] \}, \\
K^{\sqbracket{1}} &=& \{a =[0,1], b= [1,2], c= [0,2], d = [0,3], e= [2,3] \}, \\
K^{\sqbracket{2}} &=& \{ t = [0,1,2] \}.
\end{eqnarray*}
For $p=0,1,2$, we choose the standard inner product on $C_p(K)$ (see \eqref{eqn:standard}).

Let $\FF$ be the following filtration on the $K$:
\[
\emptyset\subset \{0\}\subset\{0,1\}\subset\{0,1,2\}\subset \{0,1,2,a,b,c\}\subset \{0,1,2,3,a,b,c\}
\]
\[
\subset \{0,1,2,3,a,b,c,t\}\subset \{0,1,2,3,a,b,c,d,e,t\}.
\]

For simplicity, we assume that vertex $\{0\}$ is added at time 0, and each complex in the filtration occurs at time 1 greater than the complex preceding it (see Figure~\ref{fig:essential}).
It is clear that all of the bars of the barcode of this filtration are simple.
The corresponding harmonic persistent homology 
subspaces  are listed in the following table. Note that since all the bars are simple, all these
subspaces have dimension $1$.
\begin{center}
\begin{tabular}{|c|c|}
\hline
    \multicolumn{2}{|c|}{$p=0$} \\
    \hline
     $\mathcal{P}_0^{0,\infty}(\mathcal{F})$& $\spn\{0\}$ \\
     \hline
     $\mathcal{P}_0^{1,3}(\FF)$ & $\spn\{1\}$ \\
     \hline
     $\mathcal{P}_0^{2,3}(\FF)$ & $\spn\{2\}$ \\
     \hline
     $\mathcal{P}_0^{4,6}(\FF)$ & $\spn\{3\}$ \\
     \hline
\end{tabular}
\end{center}
\begin{center}
\begin{tabular}{|c|c|}
\hline
    \multicolumn{2}{|c|}{$p=1$} \\
    \hline
     $\mathcal{P}_1^{3,5}(\mathcal{F})$& $\spn\{a+b-c\}$ \\
     \hline
     $\mathcal{P}_1^{6,\infty}(\FF)$ & $\spn\left\{a+b+2c-3d+3e\right\}$ \\
     \hline
\end{tabular}
\end{center}

For $p = 1$, the set of essential  simplices for each bar is listed below.
\begin{eqnarray*}
\Sigma((3,5,1)) &=& \{a,b,c\}, \\
\Sigma((6,\infty,1)) &=& \{d,e\}.
\end{eqnarray*}

The relative essential content of the harmonic representatives of these two bars are given by
\begin{eqnarray*}
\econt((3,5,1)) &=& 1,\\
\econt((6,\infty,1)) &=& \left(\frac{3}{4}\right)^{1/2}.
\end{eqnarray*}

Consider the simple bar born at time $6$. The homology class
corresponding to this bar can be represented by many (infinitely)
different cycles. The shortest or volume optimal cycle  representing it is $c + d - e$. The harmonic representative is given by
$a + b + 2c - 3d + 3e$.

The relative essential content of the volume optimal cycle is 
$\left(\frac{2}{3}\right)^{1/2}$ which is strictly smaller than that of the harmonic representative which is equal to $\left(\frac{3}{4}\right)^{1/2}$.
\end{example}

\begin{remark}
\label{rem:kurlin}
Note that
in \cite{kurlin2015one,Kurlin2019} the authors define 
the notion of critical simplices
that is related but is not equivalent to the notion of being essential
as defined in this paper. In fact it is easy to come up with examples
of filtrations where the set of ``critical simplices'' in the sense of \cite{kurlin2015one} is properly contained in the set of essential simplices.
\hide{
For example, if one considers the filtration of $1$-dimensional complexes
$
K_0 \subset K_1 \subset \cdots \subset K_n,
$
where for $0 \leq i < n$, 
$
K_i = \{[0,1],\ldots, [0,i+1]\},
$
and 
$
K_n = K_{n-1} \cup \{[n,0]\},
$
the set of critical edges contain just the edge $[n,0]$ which completes the cycle in the last step of the filtration, but every edge is essential for the bar in dimension one represented by the triple
$(n,\infty,1)$.
}
\end{remark}

\subsection{Stability of harmonic persistent homology and harmonic barcodes}
Stability theorems say that the persistent homology or its associated barcode
is stable under perturbations of the input data. This makes persistent homology useful in applications, where the data often comes from physical measurements with their attendant sources of error.
A basic body of results that makes persistent homology theory relevant in topological data analysis
is about the stability of the persistence module \cite{Cohen-Steiner-et-al-2007, Cohen-Steiner-et-al-2009, Cohen-Steiner-et-al-2010}.
There are many variations of stability results in the literature. 
We refer the reader to \cite[Chapter VIII]{Edelsbrunner-Harer2010} and 
\cite[\S 5.6]{Chazal-et-al} for a survey of these results.

The Euclidean space structure on the space of chains gives us the 
ability to talk about distances between harmonic homology subspaces 
corresponding to
different sub-complexes 
of some fixed ambient simplicial complex. This is important because it allows us to prove stability 
theorems -- \emph{sub-complexes which are close should have harmonic homology spaces which are close 
under a natural metric}  (see Theorem~\ref{thm:stability-homology} and Example~\ref{eg:ladder}). 

In order to discuss distance between harmonic homology subspaces, we need
a notion of distance between subspaces of a finite dimensional real vector space $V$. The set of all $d$-dimensional subspaces of a vector space $V$ is a well-studied topological space (in fact, has the structure of a projective algebraic variety) called the Grassmannian denoted $\Gr(d,V)$. Since, we will have subspaces of possibly different dimensions, we need a metric not just on $\Gr(d,V)$ but    
on the disjoint union 
\[
\Gr(V) = \coprod_{0\leq d \leq \dim V} \Gr(d,V)
\]
where $V$ is an Euclidean space $V$ ($V$ will be a
chain space equipped with an inner product).

Metrics on
Grassmannians were studied in detail by Lim and Ye  \cite{Lim-Ye2016} where the authors introduce several notions of distance between subspaces of varying dimensions. 
We use in this paper the distance called ``Grassmann distance'' in \cite{Lim-Ye2016}. This distance is defined in terms of the 
``principal angles'' between two subspaces and is defined precisely later (see Definition~\ref{def:grassmetric}). 
We prove a new class of stability theorems -- bounding the distance between two 
harmonic filtration functions (see Definition~\ref{def:persfunc}) in terms of certain norms of the difference of the functions inducing the filtrations
(see Theorems~\ref{thm:stable-persistent} and \ref{thm:stable}).
These theorems should be compared with corresponding results for classical persistent homology
groups (see \cite{Edelsbrunner-Harer2010,Chazal-et-al}). 

\subsubsection{Our results}
We prove several different stability theorems. 

Firstly, we  leverage the fact that 
harmonic homology spaces 
(we denote the harmonic homology subspace of dimension $p$ of a simplicial complex $K$ by $\mathcal{H}_p(K)$) are elements
of the Grassmannian $\Gr(C_p(K))$, where $C_p(K)$ is the $p$-th chain group of 
the simplicial complex $K$.
The Grassmannian $\Gr(C_p(K))$ carries a natural metric
induced by the Euclidean inner product on $C_p(K)$). Thus, it is meaningful to ask for a stability
result for the subspaces of $\mathcal{H}_p(K)$ themselves. We prove 
(see Theorem~\ref{thm:stability-homology} in Section~\ref{sec:harmonic-homology}) that
for $K$ a finite simplicial complex and $K_1,K_2$ sub-complexes of $K$,
for each $p \geq 0$,
$
d_{K,p}(K_1,K_2) \leq   \frac{\pi}{2}\cdot \Delta_p(K_1,K_2)^{1/2}    
$
where $d_{K,p}(K_1,K_2)$ denotes the Grassmannian 
distance between $\mathcal{H}_p(K_1)$ and
$\mathcal{H}_p(K_2)$ in the Grassmannian $\Gr(C_p(K))$
(Definition~\ref{def:grassmetric}), and 
$\Delta_p(K_1,K_2)$ is a natural measure of difference between $K_1$ and $K_2$ 
which is defined precisely in Theorem~\ref{thm:stability-homology}. \\

We next consider the harmonic persistent homology of filtrations of a finite simplicial complex $K$ induced by certain admissible functions $f:K \rightarrow \R$ (see Definition~\ref{def:simplex-wise}). 
Such a function induces for each $p \geq 0$, two functions,
$
 t \mapsto \mathcal{H}_p(K_{f \leq t}), 
$
$
(s,t) \mapsto \mathcal{H}_p^{s,t}(\mathcal{F}_f)
$.
Here $K_{f \leq t}$ denotes the sub-level complex of $f$ (see Notation~\ref{not:induced-filtration}), and
$\mathcal{H}^{s,t}_p(\mathcal{F}_f)$ is the $(p,s,t)$-th harmonic persistent 
homology subspace of the filtration $\mathcal{F}_f$ induced by $f$
(see Definition~\ref{def:harmonic-persistent}).

A stability result on each of the above functions should state that
for any pair of admissible functions $f,g:K \rightarrow \R$ which are close (under some metric)
the corresponding pairs of functions
$
t \mapsto \mathcal{H}_p(K_{f \leq t}), t \mapsto \mathcal{H}_p(K_{g \leq t})
$
as well as 
$
(s,t) \mapsto \mathcal{H}_p^{s,t}(\mathcal{F}_f), (s,t) \mapsto \mathcal{H}_p^{s,t}(\mathcal{F}_g)
$,
should be 
close to each other.
We prove that (Theorems~\ref{thm:stable} and \ref{thm:stable-persistent}), given two 
functions $f,g:K \rightarrow \R$ (satisfying a certain technical condition),
the distance between the corresponding function pair
$
t \mapsto \mathcal{H}_p(K_{f \leq t}), t \mapsto \mathcal{H}_p(K_{g \leq t})
$
as well as the pair
$(s,t) \mapsto \mathcal{H}_p^{s,t}(\mathcal{F}_f), (s,t) \mapsto \mathcal{H}_p^{s,t}(\mathcal{F}_g),
$
(defined via integrating over the appropriate domains the respective Grassmannian distances),
is bounded from above by a constant times certain semi-norms of the functions of the function $f-g$
(see Definition~\ref{def:norm-on-functions}). \\

We also study the stability of the harmonic barcodes. 
Let $K$ be a finite simplicial complex and  
let $\mathcal{F}$ denote a finite filtration
$K_0 \subset \cdots \subset K_N= K$.
The harmonic barcode
subspaces $\mathcal{P}^{s,t}_p(\mathcal{F})$  (see Definition~\ref{def:harmonic-barcode2}) 
are subspaces of $\mathcal{H}_p(K_s)$ (corresponding to the birth of 
the homology classes corresponding to the harmonic bar
$(s,t,\mathcal{P}^{s,t}_p(\mathcal{F}))$ (assuming
$\mathcal{P}^{s,t}_p(\mathcal{F}) \neq 0$). 
It also makes sense to consider the subspace of $\mathcal{H}_p(K_{t-1})$
representing the same bar  just before its  death -- which we denote by
$\widehat{\mathcal{P}}^{s,t}_p(\mathcal{F})$ (see Definition~\ref{def:terminal}).
We call $\widehat{\mathcal{P}}^{s,t}_p(\mathcal{F})$ the terminal harmonic subspace representing the 
corresponding bar.
For technical reasons we prove stability of the terminal 
harmonic subspaces representing ``generic'' bars (precise definition of ``generic'' appears later
in Definition~\ref{def:generic}).
We first define an appropriate notion of distance between harmonic barcodes of two
different filtrations
satisfying a genericity assumption.
The distance measured introduced for proving stability of the harmonic
homology subspaces and also the harmonic persistent homology subspaces 
(Theorems~\ref{thm:stable} and \ref{thm:stable-persistent})  
are in the form of an integral
(see Definitions~\ref{def:metric} and \ref{def:persistent-harmonicdist}).
Since for a filtration
$\mathcal{F}$ of a finite simplicial complex $K$, the subspaces
$\widehat{\mathcal{P}}^{s,t}_p(\mathcal{F})$ will be non-zero only for a finitely
many pairs $(s,t)$, the integral form of the distance function is not suitable.

For two finite filtrations $\mathcal{F}$ and $\mathcal{G}$ indexed by the same ordered indexing set of cardinality $N+1$, 
we will use the sum
(see Theorem~\ref{thm:stable-persistent-terminal} below)
$
\frac{1}{\binom{N+1}{2}}\cdot \sum_{s < t }  
d_{C_p(K)}(\widehat{\mathcal{P}}_p^{s,t}(\mathcal{F}), 
\widehat{\mathcal{P}}_p^{s,t}(\mathcal{G}))^2
$
as a measure of distance between the harmonic barcodes of $\mathcal{F},\mathcal{G}$,
where $d_{C_p(K)}(\cdot,\cdot)$
denotes the Grassmann distance between subspaces in $C_p(K)$ mentioned earlier
(see Definition~\ref{def:grassmetric} for the precise definition of $\dist_{C_p(K)}(\cdot,\cdot)$).

We prove that (Theorem~\ref{thm:stable-persistent-terminal}) 
\[
\frac{1}{\binom{N+1}{2}}\cdot  \sum_{0 \leq s < t \leq 1}  
d_{C_p(K)}(\widehat{\mathcal{P}}_p^{s,t}(\mathcal{F}), 
\widehat{\mathcal{P}}_p^{s,t}(\mathcal{G}))^2
\]
is bounded from above
by 
\[
\frac{\pi^3}{2} \cdot \left( ||f-g||_1^{(p)} + ||f-g||_1^{(p+1)}\right)^2
\]
where $f,g:K \rightarrow [0,1]$ 
are admissible functions inducing the filtrations $\mathcal{F},\mathcal{G}$, and the $1$-norms
$||f-g ||_1^{(p)}, || f-g ||_1^{(p+1)}$ measuring the difference between the two filtering functions
$f$ and $g$ are defined in Definition~\ref{def:norm-on-functions}.

The stability results described above 
give theoretical validity to the use of harmonicity in persistent homology theory.

\subsection{Prior and related work}
\label{sec:history}
The definition of harmonic subspaces of chain spaces of a simplicial complex goes back to the
work of Eckmann \cite{Eckmann}. It has been discussed in the context of statistical inference and developing
and studying graph Laplacians and Hodge theory on graphs by Lim in \cite{Lim2020}. 
The theory of harmonic homology is closely related to generalized Hodge theory and $L^2$-cohomology. 
The study of Hodge theory on general metric spaces with motivation coming from data analysis and computer vision
was initiated in Bartholdi et al.\  in \cite{Bartholdi-et-al-2012}. However, the emphasis in their work  
is different from that of the current paper and is concerned more about the connections and interplay between 
the generalized and the classical 
Hodge theory on Riemannian manifolds. The theory of 
$L^2$-cohomology of finite simplicial complexes was studied from an algorithmic
perspective by Friedman in \cite{Friedman96} 
who gave an efficient algorithm for computing them. 
Persistent harmonic cohomology has also being mentioned before \cite{talk:Lieutier} (see also \cite{Chen-et-al-2021}),
but the 
emphasis is more on manifold-learning rather than on general simplicial filtrations considered 
in this paper. Memoli et al.\  \cite{Memoli-et-al} also studies persistent homology groups,
defining them in terms of the Laplace operator (see Remark~\ref{rem:harmonic}) 
and gives efficient
algorithms for computing them. They also establish interesting connections with spectral graph theory and
prove certain stability results on the eigenvalues of the Laplace operator when applied to a simplicial
filtration.
These results and those in the current paper are somewhat orthogonal and
it would be interesting to investigate if they are related. 
The definition of harmonic barcodes of filtrations defined in the current paper 
is  new to our knowledge. \\

\subsection{Applications}
\label{subsec:application}
Persistent homology barcodes have found wide applications in many different areas.
Persistent harmonic barcodes as defined in this paper carry more information  but
has not yet been applied in practice. In this section we discuss possible applications
where the extra information in the persistent harmonic barcodes could potentially prove useful.

The data to which persistent homology methods are applied can be broadly categorized 
into two classes -- labelled and unlabelled. The typical example of unlabelled data 
are point cloud data approximating some underlying manifold. Persistent homology is
a tool to understand the global topology of the manifold. The individual points 
in the point cloud are not so significant by themselves -- it is only their interaction with neighbors that is important. 
If the point cloud is contained in some Euclidean space,
then the persistent homology barcode of the Vietoris-Rips filtrations is invariant under
any isometry applied to the data.

The situation is quite different if the input data is labelled. In this case each labelled  (weighted) simplex
in the corresponding simplicial complex carry information about some relationship
between its vertices. The situation is now more rigid and one needs to be able to distinguish between two isomorphic simplicial filtrations related (say) by a permutation 
of its vertices. The ordinary persistent homology barcodes cannot distinguish between two such
filtrations -- however, the persistent harmonic barcodes introduced in this paper
with the extra information will be able to distinguish between them.
This is very important in applications (such as in genomics) where
persistent homology methods are applied to ``relationship'' data as opposed to point-cloud data 
from some $\mathbb{R}^d$ (see 
\cite{talk:Parida} and also  \cite{Mandal-et-al-2020, Guzman-et-al-2019, Lesnick-et-al-2020, Rabadan-et-al-2019} for recent representative examples of such applications)

In practical applications, 
often the barcodes obtained from the input data are fed into some standard machine learning algorithm
such as a convolution neural net (CNN). 
Since the harmonic barcodes carry more refined information and have desirable properties
(stability and maximizing relative essential content) it is reasonable to postulate
that in a two-step pipeline feeding extra information regarding the harmonic persistent barcodes to the 
machine learning programs could improve the quality of the output.

We now describe a specific scenario where harmonic persistent barcodes could prove useful. This is an example of an application of topological data analysis in genomics where the labels of vertices are significant.

\subsubsection{Phenotype prediction}
In \cite{Mandal-et-al-2020} the authors investigate 
if gene expression measured from RNA sequencing contains enough signal to separate healthy from individuals
afflicted with Parkinson's disease. 
Topological data analysis (persistence barcode) was used in conjunction with
a certain standard machine learning tool (CNN) and the approach
 yielded improved results on Parkinson’s disease phenotype prediction when measured against standard machine learning methods.
The results in the paper thus  confirm that gene expression can be a useful indicator of the presence or absence of a condition, and the subtle signal contained in this high dimensional data reveals itself when considering the intricate topological connections between expressed genes.

The input is  sequencing-based gene expression values from blood samples. More precisely,
the input is a matrix $X$ of size whose rows correspond to subjects and whose columns  correspond to genes. Each entry $X_{i,j}$  
is the expression value of the $j$-th gene expression of the $i$-th subject.

Using a distance correlation matrix obtained from $X$ (see Section 2.3 in \cite{Mandal-et-al-2020}) for each individual
and every pair of genes a value is computed which measures the pair-wise interaction of these two genes --
and from these values one creates a Vietoris-Rips filtration of a simplicial comlpex (whose
zero dimensional simplices are labelled by the genes).
Using persistent homology computation one obtains for each individual their persistent homology barcodes.
Actually, what is computed 
are persistence landscapes (this is a variant of barcode introduced in \cite{Bubenik} and is particularly suitable to give at input to machine learning algorithm used in the second step of the pipeline).
These landscapes are then used to train a convolution neural network to separate the healthy from the 
afflicted
(see Figure 2 in \cite{Mandal-et-al-2020} reproduced as Figure~\ref{fig:Mandal-et-al}).
\begin{figure}
    \centering
    \includegraphics[scale=0.50]{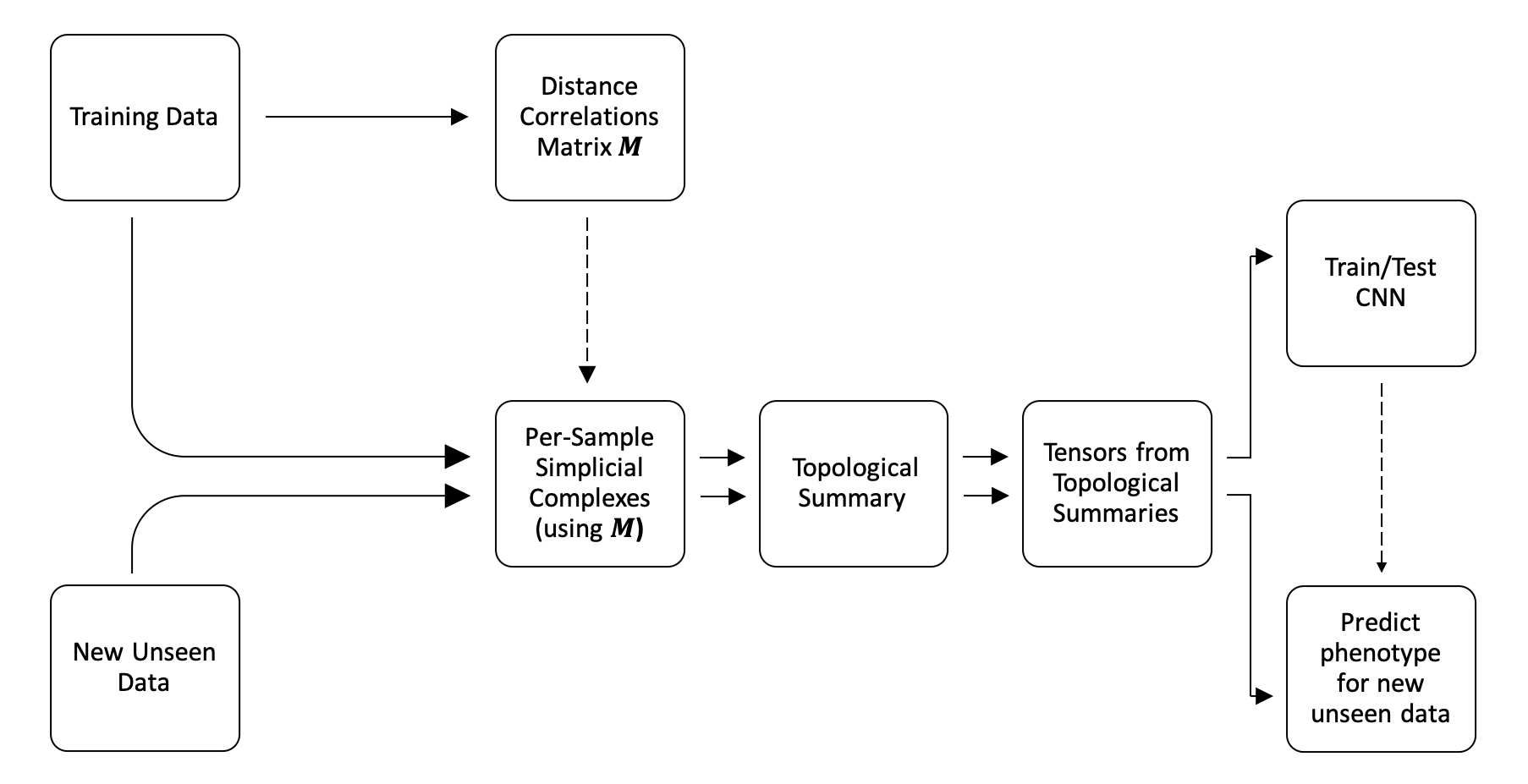}
    \caption{Pipeline reproduced from  \cite{Mandal-et-al-2020}}
    \label{fig:Mandal-et-al}
\end{figure}
This method of extracting the topological information first (i.e. computing the topological landscapes) and then using machine learning techniques was found to be more effective than
just using machine learning methods on the raw data (i.e. on the matrix $X$). 

The vertices of the Vietoris-Rips complex derived from the input data 
corresponds to genes
and thus the filtration considered  is a labelled filtration. 
We plan to augment the output of the persistent homology computation by adding to each bar
$b = (s,t,1)$
(assumed to be simple) unit vectors whose spans are the corresponding initial and 
terminal harmonic homology subspaces 
$\mathcal{P}^{s,t}_p(\mathcal{F}), \widehat{\mathcal{P}}^{s,t}_p(\mathcal{F})$ (Definition~\ref{def:terminal})
associated to $b$.
(Of course, we need to update the persistent homology software to compute these vectors and
develop algorithms for computing them efficiently.)
This should
increase the amount of discerning information in  the input to the ML algorithms 
in the second step of the  pipeline. We postulate that doing so would improve the power of the method.
The experimental results on whether such improvements are actually observed
in practice is under investigation and will be reported in a subsequent paper. \\

The rest of the paper is organized as follows. In Section~\ref{sec:harmonic-homology}, we give the necessary mathematical background and define harmonic homology spaces and prove their basic properties, including a key theorem (Theorem~\ref{thm:stability-homology}) bounding the distance between harmonic homology subspaces of two simplicial complexes in terms of the difference between the two simplicial  complexes.
In Section~\ref{sec:harmonic-persistent-homology}, we define harmonic persistent homology, harmonic bar codes and
prove three stability theorems (Theorems~\ref{thm:stable}, \ref{thm:stable-persistent}, \ref{thm:stable-persistent-terminal}).
Finally, in Section~\ref{sec:essential}, we 
prove that harmonic representatives maximize relative essential content (Theorem~\ref{thm:essential}).

\section{Harmonic homology}
\label{sec:harmonic-homology}
In this section we define harmonic homology spaces of simplicial complexes and prove their basic properties.

\subsection{Simplicial complex}
We  recall here some basic definitions and notation from simplicial homology theory.
\begin{definition}
\label{def:simplicial-complex}
A \emph{finite simplicial complex} $K$ is a set of ordered subsets of $[N] = \{0,\ldots N\}$ for some $N \geq 0$, such that if $\sigma \in K$ and $\tau$
is a subset of $\sigma$, then $\tau \in K$. 
\end{definition}

\begin{notation}
\label{def:p-simplices}
If $\sigma = \{i_0,\ldots,i_p\} \in K$, 
with $K$ a finite simplicial complex,
and $i_0 < \cdots < i_p $, we will denote $\sigma = [i_0,\ldots,i_p]$ and call $\sigma$ a \emph{$p$-dimensional simplex of $K$}. 
We will denote by $K^{(p)}$ the subcomplex of $K$ consisting of simplices of $K$ of dimension $\leq p$.
We will denote by $K^{\sqbracket{p}} = K^{(p)} \setminus K^{(p-1)}$ the subset of $p$-dimensional simplices of $K$.
Note that $K^{\sqbracket{p}}$ is not a subcomplex.
\end{notation}

\begin{definition}[Chain groups]
\label{def:chain-groups}
Suppose $K$ is a finite simplicial complex.
For $p \geq 0$,  we will denote by 
$C_p(K) = C_p(K,\R)$ (the $p$-th chain group),   
the $\R$-vector space generated by the elements of $K^{\sqbracket{p}}$,
i.e.
\[
C_p(K) = \bigoplus_{\sigma \in K^{\sqbracket{p}}}\R \cdot \sigma.
\]
\hide{
The tuple
$\left(\sigma \right)_{\sigma \in K^{\sqbracket{p}}}$ is then a basis (called the standard basis) of $C_p(K)$
(where by a standard abuse of notation we identify $\sigma$ with the image of $1\cdot \sigma$ under the canonical injection of the direct summand $\R \cdot \sigma$ into $C_p(K) = \bigoplus_{\sigma \in K^{\sqbracket{p}}}\R \cdot \sigma$).
}
\end{definition}
\begin{definition}[The boundary map]
\label{def:boundary-map}
We denote by $\partial_p(K): C_p(K) \rightarrow C_{p-1}(K)$ the 
linear map (called the $p$-th \emph{boundary map})  defined as follows. Since 
$\left(\sigma \right)_{\sigma \in K^{\sqbracket{p}}}$ is a basis of $C_p(K)$ it is enough to define the image of each $\sigma \in C_p(K)$. 
We define for $\sigma = [i_0,\ldots,i_p] \in K^{\sqbracket{p}}$,
$
\partial_p(K)(\sigma) = \sum_{0\leq j \leq p} (-)^j [i_0,\ldots, \widehat{i_j}, \ldots,i_p] \in C_{p-1}(K),
$
where $\widehat{\cdot}$ denotes omission.
\end{definition}
\hide{
One can easily check that the boundary maps $\partial_p$ satisfy the key property that 
\[
\partial_{p+1}(K) \;\circ\; \partial_{p}(K) = 0,
\]
or equivalently
that 
\[
\mathrm{Im}(\partial_{p+1}(K)) \subset \Ker(\partial_p(K)).
\]
}
\begin{notation}[Cycles, boundaries, homology and the canonical surjection]
We denote 
$
Z_p(K) = \Ker(\partial_p(K)),
$
(the space of \emph{$p$-dimensional cycles}),
$
B_p(K) = \mathrm{Im}(\partial_{p+1}(K))
$
(the space of \emph{$p$-dimensional boundaries}),
and
$\HH_p(K) = Z_p(K)/B_p(K)$
(the \emph{$p$-dimensional simplicial homology group} of $K$).
We will denote by 
\[
\phi_p(K):Z_p(K) \rightarrow Z_p(K)/B_p(K) =  \HH_p(K)
\]
the canonical surjection.
\end{notation}

\subsection{Representing homology classes by harmonic chains} 
Let $K$ be a finite simplicial complex.
We make the chain group $C_p(K)$ into an Euclidean space by fixing an inner product $\la \cdot,\cdot\ra_{C_p(K)}$. 
For the rest of the paper we fix the following inner product on $C_p(K)$ which we will    
refer to as the standard inner product on $C_p(K)$.
We define:
\begin{equation}
\label{eqn:standard}
\la \sigma, \sigma'\ra_{C_p(K)} = \delta_{\sigma,\sigma'},  \sigma,   \sigma' \in K^{\sqbracket{p}}
\end{equation}
(i.e. we declare the basis  $\left(\sigma \right)_{\sigma \in K^{\sqbracket{p}}}$ to be an orthonormal basis).
If the context is clear we will omit the subscript from the notation  $\la \cdot,\cdot\ra_{C_p(K)}$.

We now come to a key definition -- namely, that of harmonic homology (following \cite{Eckmann}).
\begin{definition}[Harmonic homology subspace]
\label{def:harmonic}
For $p \geq 0$,  we will denote 
\[
\mathcal{H}_p(K) = Z_p(K)  \cap  B_p(K)^\perp
\]
and call $\mathcal{H}_p(K) \subset C_p(K)$ the \emph{harmonic homology subspace of $K$}. 
\end{definition}

\subsubsection{Elementary properties}
The following two propositions encapsulate the key properties of the harmonic homology subspaces.
\begin{proposition}
\label{prop:f}
The map  $\mathfrak{f}_p(K)$ defined by 
\begin{equation}
    \label{eqn:f}
    z + B_p(K) \rightarrow \mathrm{proj}_{B_p(K)^\perp}(z), z \in Z_p(K)
\end{equation}
gives an isomorphism
$
\mathfrak{f}_p(K): \HH_p(K)  \rightarrow \mathcal{H}_p(K).
$
\end{proposition}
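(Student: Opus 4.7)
The plan is to verify the four standard properties of an isomorphism in turn: well-definedness, image landing in $\mathcal{H}_p(K)$, injectivity, and surjectivity. Linearity is immediate from the linearity of orthogonal projection, so I will not belabor it.

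First I would set up the orthogonal decomposition $C_p(K) = B_p(K) \oplus B_p(K)^\perp$ induced by the standard inner product, with the corresponding projection $\pi \defeq \mathrm{proj}_{B_p(K)^\perp}$. Well-definedness of the map on the quotient is immediate: if $z, z' \in Z_p(K)$ satisfy $z - z' \in B_p(K)$, then $\pi(z-z') = 0$, hence $\pi(z) = \pi(z')$, so the value of $\mathfrak{f}_p(K)$ depends only on the class $z + B_p(K)$.

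The first substantive step is to check that $\pi(z) \in \mathcal{H}_p(K) = Z_p(K) \cap B_p(K)^\perp$ whenever $z \in Z_p(K)$. By definition $\pi(z) \in B_p(K)^\perp$, so it suffices to show $\pi(z) \in Z_p(K)$. Writing $z = b + h$ with $b \in B_p(K)$ and $h = \pi(z) \in B_p(K)^\perp$, the relation $\partial_{p+1} \circ \partial_p = 0$ (equivalently $B_p(K) \subseteq Z_p(K)$) gives $b \in Z_p(K)$, so $h = z - b \in Z_p(K)$ as well. This is the only step that genuinely uses the chain complex structure (namely $B_p \subseteq Z_p$); everything else is formal linear algebra.

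For injectivity, suppose $\mathfrak{f}_p(K)(z + B_p(K)) = 0$, i.e.\ $\pi(z) = 0$. Then $z \in B_p(K)$, so the class $z + B_p(K)$ is zero in $\HH_p(K)$. For surjectivity, given $h \in \mathcal{H}_p(K)$, the inclusion $h \in Z_p(K)$ means $h + B_p(K) \in \HH_p(K)$ is a well-defined homology class, and $\mathfrak{f}_p(K)(h + B_p(K)) = \pi(h) = h$ since $h \in B_p(K)^\perp$. Combining these steps yields the isomorphism.

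I do not anticipate any real obstacle here; the only point that requires more than unwinding definitions is verifying $\pi(z) \in Z_p(K)$ in the second step, which hinges on $B_p(K) \subseteq Z_p(K)$. The result can be viewed as the simplicial analogue of the Hodge decomposition picking out a canonical representative in each cohomology class.
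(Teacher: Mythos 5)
Your proof is correct and takes essentially the same approach as the paper: the key step in both is using $B_p(K) \subseteq Z_p(K)$ to conclude that $\mathrm{proj}_{B_p(K)^\perp}(z)$ stays in $Z_p(K)$, after which injectivity and surjectivity are routine. You spell out the remaining steps that the paper dismisses as obvious, which is fine.
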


\begin{proof}
First observe that 
using 
the fact that $B_p(K) \subset Z_p(K)$,
we have that for $z \in Z_p(K)$, $\mathrm{proj}_{B_p(K)^\perp}(z) \in Z_p(K)$, and so the
map $\mathfrak{f}_p(K)$ is well defined. The injectivity and surjectivity of $\mathfrak{f}_p(K)$ are
then obvious.
\end{proof}

\begin{proposition}
\label{prop:intersection}
$
\mathcal{H}_p(K) = \Ker(\partial_{p+1}(K)^*) \cap \Ker(\partial_{p}(K))
$
(where $L^*$ denotes the adjoint of a linear map $L$ between two inner product spaces).
\end{proposition}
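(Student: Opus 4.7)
The plan is to unpack the definition $\mathcal{H}_p(K) = Z_p(K) \cap B_p(K)^{\perp}$ and rewrite each factor as the kernel of a linear map, after which the claimed identity becomes immediate.

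First, I would rewrite $Z_p(K)$ using its definition: $Z_p(K) = \ker(\partial_p(K))$, which is already given. So one factor of the intersection is already in the desired form.

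Next, I would rewrite $B_p(K)^{\perp}$ as a kernel of an adjoint. This is the one genuine (but entirely standard) linear-algebra input: for any linear map $L : V \to W$ between finite-dimensional inner product spaces, one has $\mathrm{Im}(L)^{\perp} = \ker(L^{*})$. I would prove this one-liner by noting that $w \in \mathrm{Im}(L)^{\perp}$ means $\langle L v, w \rangle_{W} = 0$ for all $v \in V$, which by definition of the adjoint is equivalent to $\langle v, L^{*} w \rangle_{V} = 0$ for all $v \in V$, which is the same as $L^{*} w = 0$. Applied with $L = \partial_{p+1}(K) : C_{p+1}(K) \to C_p(K)$, this gives $B_p(K)^{\perp} = \mathrm{Im}(\partial_{p+1}(K))^{\perp} = \ker(\partial_{p+1}(K)^{*})$.

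Combining the two displays,
\[
\mathcal{H}_p(K) = Z_p(K) \cap B_p(K)^{\perp} = \ker(\partial_p(K)) \cap \ker(\partial_{p+1}(K)^{*}),
\]
which is exactly the claim. There is no real obstacle here; the only subtlety is making sure we invoke the adjoint with respect to the same inner product $\langle \cdot , \cdot \rangle_{C_p(K)}$ that is used to define both $B_p(K)^{\perp}$ and $\partial_{p+1}(K)^{*}$, which is automatic given the setup of the standard inner product on each chain group.
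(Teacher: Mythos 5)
Your proof is correct and follows essentially the same route as the paper: both reduce the claim to the identity $B_p(K)^{\perp} = \ker(\partial_{p+1}(K)^{*})$, which is exactly the standard fact $\mathrm{Im}(L)^{\perp} = \ker(L^{*})$ applied to $L = \partial_{p+1}(K)$, and both verify that fact by the same chain of equivalences unwinding the definition of the adjoint.
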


\begin{proof}
From Definition~\ref{def:harmonic} and the fact that 
$Z_p(K) = \Ker(\partial_p)$, it 
suffices to prove that 
\[\Ker(\partial_{p+1}(K)^*) = B_p(K)^\perp.
\]
For $z \in C_p(K)$,
\begin{eqnarray*}
z \in B_p(K)^\perp &\Leftrightarrow& \la z, z'\ra_{C_p(K)} = 0 \mbox{ for all } z' \in B_p(K) \\
&\Leftrightarrow& \la z, \partial_{p+1}(w) \ra_{C_p(K)} = 0 \mbox{ for all } w \in C_{p+1}(K) \\
&\Leftrightarrow& \la \partial_{p+1}^*(z), w\ra_{C_{p+1}(K)} = 0 \mbox{ for all } w \in C_{p+1}(K) \\
&\Leftrightarrow& z \in \Ker(\partial_{p+1}^*).
\end{eqnarray*}
This completes the proof of the proposition.
\end{proof}

\begin{remark}
\label{rem:harmonic}
The harmonic homology group
$\mathcal{H}_p(K)$ as defined above is
equal to the kernel of the linear map 
$\Delta_p = \partial_{p+1}\circ \partial_{p+1}^* + \partial_p^*\circ \partial_p$.
The linear map $\Delta_p(K):C_p(K) \rightarrow C_p(K)$ is a discrete analog of the Laplace operator and 
thus it makes sense to call its kernel the space of harmonic cycles.
\end{remark}

Since the above description is often taken as a definition of harmonic homology groups
we include the proof of the equivalence of the two definitions below.
\begin{proposition}
\label{prop:Laplacian}
$
\Ker (\Delta_p(K)) = \mathcal{H}_p(K)
$.
\end{proposition}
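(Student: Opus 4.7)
The plan is to reduce the claim, via Proposition~\ref{prop:intersection}, to showing the equality
\[
\ker(\Delta_p(K)) = \ker(\partial_{p+1}(K)^*) \cap \ker(\partial_p(K)),
\]
and then to prove this equality by the standard ``self-adjoint positive sum'' trick.

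First I would prove the easy inclusion $\supseteq$. If $z \in C_p(K)$ satisfies both $\partial_p(K)(z) = 0$ and $\partial_{p+1}(K)^*(z) = 0$, then plugging into the definition $\Delta_p(K) = \partial_{p+1}(K)\circ\partial_{p+1}(K)^* + \partial_p(K)^*\circ\partial_p(K)$ shows immediately that $\Delta_p(K)(z) = 0$. Combined with Proposition~\ref{prop:intersection} this gives $\mathcal{H}_p(K) \subseteq \ker(\Delta_p(K))$.

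For the reverse inclusion, suppose $\Delta_p(K)(z) = 0$. The key step is to pair $\Delta_p(K)(z)$ with $z$ using the inner product on $C_p(K)$, and use the definition of adjoint to rewrite
\[
0 = \langle \Delta_p(K)(z), z\rangle_{C_p(K)} = \langle \partial_{p+1}(K)^*(z), \partial_{p+1}(K)^*(z)\rangle_{C_{p+1}(K)} + \langle \partial_p(K)(z), \partial_p(K)(z)\rangle_{C_{p-1}(K)}.
\]
Both summands on the right are nonnegative (they are squared norms), so each must vanish separately, yielding $\partial_{p+1}(K)^*(z) = 0$ and $\partial_p(K)(z) = 0$. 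Invoking Proposition~\ref{prop:intersection} once more identifies $z$ as an element of $\mathcal{H}_p(K)$, completing the inclusion $\ker(\Delta_p(K)) \subseteq \mathcal{H}_p(K)$.

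There is no real obstacle here: once one has Proposition~\ref{prop:intersection} in hand, this is a two‑line verification using only the adjointness relation $\langle L x, y\rangle = \langle x, L^* y\rangle$ and nonnegativity of squared norms. The only care needed is bookkeeping about which chain group each inner product lives in, so that the adjoint manipulation is justified; this is entirely routine.
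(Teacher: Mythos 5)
Your proof is correct and follows essentially the same route as the paper's: the easy inclusion is immediate from Proposition~\ref{prop:intersection}, and the reverse inclusion is obtained by pairing $\Delta_p(K)(z)$ with $z$, moving the maps to the other side via adjointness, and observing that the resulting sum of nonnegative squared norms must vanish termwise. There is nothing to add.
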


\begin{proof}
It follows directly from Proposition~\ref{prop:intersection} and the
definition of $\Delta_p(K)$ that 
$
\mathcal{H}_p(K) \subset \Ker(\Delta_p(K)).
$

In order to prove the opposite inclusion. Suppose $z \in \Ker(\Delta_p(K))$.
Then, 
$
\partial_{p+1}\circ\partial_{p+1}^*(z) +\partial_p^*\circ \partial_p(z) = 0
$.

Taking inner product with $z$ we obtain
\[
\la z, \partial_{p+1}\circ\partial_{p+1}^*(z)\ra_{C_p(K)} + \la z, \partial_p^*\circ\partial_p(z) \ra_{C_p(K)} = 0,
\]
which using the defining property of adjoints gives
\[
\la \partial_{p+1}^*(z), \partial_{p+1}^*(z)\ra_{C_{p+1}(K)} + \la \partial_p(z), \partial_p(z) \ra_{C_{p-1}(K)} = 0.
\]

This implies that 
$
z \in \Ker(\partial_{p+1}^*) \cap \Ker(\partial_p) = \mathcal{H}_p(K),
$
the last equality being a consequence of Proposition~\ref{prop:intersection}.
\end{proof}

\subsubsection{Functoriality of the maps $\mathfrak{f}_p(K)$ under inclusion}
Now suppose $K_1 \subset K_2$ are sub-complexes of the finite simplicial complex $K$. 
Then, $C_p(K_1)$ is a subspace of $C_p(K_2)$. 

\begin{proposition}
\label{prop:functorial}
The restriction of $\mathrm{proj}_{B_p(K_2)^\perp}$ to $\mathcal{H}_p(K_1)$ gives a linear map
\[
\mathfrak{i}_p= \mathrm{proj}_{B_p(K_2)^\perp}|_{\mathcal{H}_p(K_1)} : \mathcal{H}_p(K_1) \rightarrow \mathcal{H}_p(K_2),
\]
which makes the following diagram commute
\[
\xymatrix{
\HH_p(K_1) \ar[r]^{i_p} \ar[d]^{\mathfrak{f}_p(K_1)} & \HH_p(K_2) \ar[d]^{\mathfrak{f}_p(K_2)} \\
\mathcal{H}_p(K_1) \ar[r]^{\mathfrak{i}_p} &\mathcal{H}_p(K_2)
}
\]
where $i_p: \HH_p(K_1) \rightarrow \HH_p(K_2)$ is the map induced by the inclusion $K_1 \hookrightarrow K_2$.
\end{proposition}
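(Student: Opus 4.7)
The plan is to verify two things separately: first, that the proposed map $\mathfrak{i}_p = \proj_{B_p(K_2)^\perp}|_{\mathcal{H}_p(K_1)}$ indeed has image inside $\mathcal{H}_p(K_2)$ so that it is a well-defined map between the claimed spaces; and second, that the resulting square commutes. Both steps rest on the elementary observation that since $K_1 \subset K_2$, the inclusion $C_p(K_1) \hookrightarrow C_p(K_2)$ intertwines the boundary operators $\partial_p(K_1)$ and $\partial_p(K_2)$, yielding the containments $Z_p(K_1) \subset Z_p(K_2)$ and $B_p(K_1) \subset B_p(K_2)$.

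For the first step, I would take $z \in \mathcal{H}_p(K_1)$ and write $z = \proj_{B_p(K_2)^\perp}(z) + \proj_{B_p(K_2)}(z)$. Since $z \in Z_p(K_1) \subset Z_p(K_2)$ and $\proj_{B_p(K_2)}(z) \in B_p(K_2) \subset Z_p(K_2)$, subtraction gives $\proj_{B_p(K_2)^\perp}(z) \in Z_p(K_2)$. Combined with the fact that $\proj_{B_p(K_2)^\perp}(z)$ lies in $B_p(K_2)^\perp$ by construction, we conclude that $\mathfrak{i}_p(z) \in Z_p(K_2) \cap B_p(K_2)^\perp = \mathcal{H}_p(K_2)$.

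For the second step, I would chase a class $[z] \in \HH_p(K_1)$ with cycle representative $z \in Z_p(K_1)$ around both sides of the square. Going across and then down produces $\mathfrak{f}_p(K_2)(i_p([z])) = \proj_{B_p(K_2)^\perp}(z)$ by Proposition~\ref{prop:f}; going down and then across produces $\mathfrak{i}_p(\mathfrak{f}_p(K_1)([z])) = \proj_{B_p(K_2)^\perp}(\proj_{B_p(K_1)^\perp}(z))$. To see these coincide, decompose $z = \proj_{B_p(K_1)^\perp}(z) + \proj_{B_p(K_1)}(z)$ and apply $\proj_{B_p(K_2)^\perp}$; the second term vanishes because $B_p(K_1) \subset B_p(K_2)$, so $\proj_{B_p(K_1)}(z)$ is already in $B_p(K_2)$ and is therefore annihilated by the projection onto $B_p(K_2)^\perp$.

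The main (mild) obstacle is precisely this commutativity step, since the two orthogonal projections $\proj_{B_p(K_1)^\perp}$ and $\proj_{B_p(K_2)^\perp}$ do not commute as operators on $C_p(K_2)$ in general, and one cannot simply interchange them. The saving input is the inclusion $B_p(K_1) \subset B_p(K_2)$, which yields the identity $\proj_{B_p(K_2)^\perp} \circ \proj_{B_p(K_1)} = 0$, and this is exactly what reduces the two-step projection to the single projection $\proj_{B_p(K_2)^\perp}(z)$ and closes the diagram.
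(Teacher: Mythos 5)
Your proof is correct and follows essentially the same approach as the paper: the commutativity step is exactly the content of the paper's Lemma~\ref{lem:functorial} (decompose $z = \proj_{B_p(K_1)^\perp}(z) + \proj_{B_p(K_1)}(z)$ and observe that $\proj_{B_p(K_2)^\perp}$ kills the second summand since $B_p(K_1)\subset B_p(K_2)$), which you have simply inlined. You also verify explicitly that $\mathfrak{i}_p$ lands in $\mathcal{H}_p(K_2)$, a point the paper handles somewhat more implicitly (it follows there from surjectivity of $\mathfrak{f}_p(K_1)$ once the square is shown to commute); your explicit check is a small but welcome clarification.
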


Before proving the proposition we first prove a very basic lemma.

\begin{lemma}
\label{lem:functorial}
Let $B_1 \subset Z_1$, $B_2 \subset Z_2$ be subspaces of an Euclidean vector space $V$, and
suppose that $B_1 \subset B_2$, and $Z_1 \subset Z_2$. Then for $z_1 \in Z_1$,
\[
\proj_{B_2^\perp} \circ \proj_{B_1^\perp}(z_1) = \proj_{B_2^\perp}(z_1).
\]
\end{lemma}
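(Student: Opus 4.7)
The plan is a direct linear-algebra computation using orthogonal decomposition. First, I would write the standard decomposition of $z_1$ with respect to the orthogonal sum $V = B_1 \oplus B_1^\perp$, namely
\[
z_1 = \proj_{B_1}(z_1) + \proj_{B_1^\perp}(z_1).
\]
Applying the linear map $\proj_{B_2^\perp}$ to both sides and using linearity gives
\[
\proj_{B_2^\perp}(z_1) = \proj_{B_2^\perp}\bigl(\proj_{B_1}(z_1)\bigr) + \proj_{B_2^\perp}\bigl(\proj_{B_1^\perp}(z_1)\bigr).
\]

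Next, I would observe that $\proj_{B_1}(z_1) \in B_1$ by definition of the projection, and $B_1 \subset B_2$ by hypothesis, so $\proj_{B_1}(z_1) \in B_2$. Since $\proj_{B_2^\perp}$ annihilates every vector of $B_2$, the first summand on the right vanishes, leaving
\[
\proj_{B_2^\perp}(z_1) = \proj_{B_2^\perp}\bigl(\proj_{B_1^\perp}(z_1)\bigr),
\]
which is exactly the desired identity.

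I should note that the hypotheses $Z_1 \subset Z_2$ and $z_1 \in Z_1$ are not actually needed for this identity itself; only the inclusion $B_1 \subset B_2$ enters. Those extra hypotheses on the $Z_i$ are there because of how the lemma will be invoked in the proof of Proposition~\ref{prop:functorial}, where one needs the image of $\mathfrak{i}_p$ to land in $Z_p(K_2)$ as well. There is no real obstacle in the proof — the entire argument is a two-line manipulation, with the only substantive step being the containment $\proj_{B_1}(z_1) \in B_1 \subset B_2$.
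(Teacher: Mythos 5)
Your proof is correct and is essentially the same as the paper's: both decompose $z_1$ as $\proj_{B_1}(z_1) + \proj_{B_1^\perp}(z_1)$, apply $\proj_{B_2^\perp}$, and observe that the first summand is killed because $\proj_{B_1}(z_1) \in B_1 \subset B_2$. Your side remark that the hypotheses on $Z_1, Z_2$ are superfluous for this identity is also accurate; they are only needed for the way the lemma is later used in Proposition~\ref{prop:functorial}.
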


\begin{proof}
First observe that $B_1 \subset B_2$ implies that $B_2^\perp 
\subset B_1^\perp$.

Now let $w_1 = \proj_{B_1^\perp}(z_1)$. Then $w_1 = z_1 - \proj_{B_1}(z_1)$.
\begin{eqnarray*}
\proj_{B_2^\perp} \circ \proj_{B_1^\perp}(z_1) &=& \proj_{B_2^\perp}(w_1) \\
&=& \proj_{B_2^\perp}(z_1) - \proj_{B_2^\perp}(\proj_{B_1}(z_1)) \\
&=& \proj_{B_2^\perp}(z_1),
\end{eqnarray*}
noting that
$
\proj_{B_2^\perp}(\proj_{B_1}(z_1)) = 0,
$
since (as noted earlier) $B_2^\perp \subset B_1^\perp$.
\end{proof}

\begin{proof}[Proof of Proposition~\ref{prop:functorial}]
Let $z_1 + B_p(K_1) \in \HH_p(K_1)$. Using the definition of $\mathfrak{f}_p(K_1)$ (see Eqn.\  \eqref{eqn:f} in Proposition~\ref{prop:f}) we have
\[
\mathfrak{f}_p(K_1)(z_1+ B_p(K_1)) = \mathrm{proj}_{B_p(K_1)^\perp}(z_1) \in \mathcal{H}_p(K_1) \subset 
B_p(K_1)^\perp.
\]

Now observe that 
$B_p(K_1) \subset Z_p(K_1)$, $B_p(K_2) \subset Z_p(K_2)$, $B_p(K_1) \subset B_p(K_2)$,
$Z_p(K_1) \subset Z_p(K_2)$.

So 
\begin{eqnarray*}
\mathfrak{i}_p(K) \circ \mathfrak{f}_p(K_1) (z_1 + B_p(K_1)) &=& 
\mathrm{proj}_{B_p(K_2)^\perp} \circ \mathrm{proj}_{B_p(K_1)^\perp}(z_1) \mbox{ (using \eqref{eqn:f})}\\
&=&
\mathrm{proj}_{B_p(K_2)^\perp}(z_1) \mbox{ (using Lemma~\ref{lem:functorial})}\\
&=& \mathfrak{f}_p(K_2) \circ i_p(z_1 + B_p(K_1)).
\end{eqnarray*}
This proves the proposition.
\end{proof}

\subsection{Stability of harmonic homology subspaces}
Suppose $K$ is a finite simplicial complex and $K_1,K_2$ sub-complexes of $K$. In various applications
one would want to compare the homology spaces of $K_1$ and $K_2$ quantitatively. In particular, one
wants to say that if $K_1$ and $K_2$ are close under some natural metric then so are $\HH_*(K_1)$ and
$\HH_*(K_2)$. Harmonic homology allows us to make rigorous this intuitive statement. 

Since for each $p \geq 0$, $\mathcal{H}_p(K_1), \mathcal{H}_p(K_2)$ will be subspaces of $C_p(K)$, and thus correspond to  points
in $\Gr(b_p(K_1),C_p(K)), \Gr(b_p(K_2),C_p(K))$ respectively,  
(denoting by $b_p(K_i) = \dim \mathcal{H}_p(K_i)$)
we first introduce a metric on the disjoint union of Grassmannians, 
$\coprod_{0 \leq d \leq \dim C_p(K)} \Gr(d,C_p(K))$.

\subsubsection{Metric on Grassmannian}
Let $V$ be an Euclidean space, and for $0 \leq d \leq \dim V$ we will denote by
$\Gr(d,V)$ the real Grassmannian variety of $d$-dimensional subspaces of $V$, and we will
denote 
\[
\Gr(V) = \coprod_{0 \leq d \leq \dim V} \Gr(d,V).
\]

Given two subspaces $A, B \subset V$, one way to measure how far away they are from each other
is to take the square root of the sum of the squares of the principal angles between $A$ and $B$. When the dimensions
of $A$ and $B$ are equal this works well and produces a metric on the Grassmannian $\Gr(d,V)$ where
$d = \dim A = \dim B$. However, if the dimensions of $A$ and $B$ are not necessarily equal, then this 
quantity can be zero even when $A \neq B$ (for example, if $A \subset B$). However, we would like
to distinguish between such subspaces in our applications.
In order to obtain
a metric on the disjoint union $\Gr(V)$, one needs to tweak the above definition.

Following  Lim and Ye \cite{Lim-Ye2016}, we now define a metric on $\Gr(V)$.
\begin{definition}
\label{def:grassmetric}
For $A \in \Gr(k,V), B \in \Gr(\ell,V)$ with $0 \leq k,\ell \leq \dim V$, we define
$$
d_{V}(A,B)=\left(|k-\ell|\frac{\pi^2}{4}+\ds\sum_{i=1}^{\min\{k,\ell\}} \theta_i^2\right)^{1/2}
$$
where $\theta_i$ is the $i$-th principal angle between $A$ and $B$.
\end{definition}

\subsubsection{Principal angles in terms of singular values}
The cosines of the principle angles between two subspaces of an Euclidean space $V$ can be expressed in terms
of the singular values of an associated matrix. Let $P,Q$ be two subspaces of $V$.
Fix an orthonormal basis $\mathbf{B}$ of $V$.
Suppose $\dim P = p \geq q = \dim Q$. Let $A,B$ be matrices whose
columns are the coordinates with respect to $\mathbf{B}$ of some orthonormal bases of $P$ and $Q$ respectively.  
Let
$U^T \Sigma V$ be the singular value decomposition of the $p \times q$ matrix $P^T Q$, with
$\Sigma$ a $p \times q$ matrix with diagonal entries the singular values $\sigma_1 \geq \cdots
\geq \sigma_q \geq 0$.

Then for $1 \leq i \leq q$,
\begin{equation}
\label{eqn:singular}
\sigma_i = \cos(\theta_i), 
\end{equation}
where $\theta_i$ is the $i$-th principle angle between
$P$ and $Q$.

\begin{lemma}
\label{lem:subspace}
Let $V$ be an Euclidean space and $W_1, W_2 \subset V$ be subspaces. 
Let 
\[
k = \max(\dim W_1,\dim W_2),
\]
and 
\[
\ell = \dim (W_1 \cap W_2).
\]
Then, 
\[
d_V(W_1,W_2) \leq  \frac{\pi}{2} \cdot (k - \ell)^{1/2}. 
\]
\end{lemma}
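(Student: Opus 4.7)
The plan is to expand the definition of $d_V(W_1, W_2)$ and bound the sum of squared principal angles using the fact that each principal angle lies in $[0, \pi/2]$ together with the standard identity that exactly $\dim(W_1 \cap W_2)$ of the principal angles equal zero.

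Without loss of generality I will assume $p := \dim W_1 \geq \dim W_2 =: q$, so that $k = p$ and $\min(\dim W_1, \dim W_2) = q$. Definition~\ref{def:grassmetric} then reads
\begin{equation*}
d_V(W_1, W_2)^2 \;=\; (p - q)\frac{\pi^2}{4} \;+\; \sum_{i=1}^{q} \theta_i^2,
\end{equation*}
where $\theta_1 \leq \theta_2 \leq \cdots \leq \theta_q$ are the principal angles between $W_1$ and $W_2$, each lying in $[0, \pi/2]$.

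The key step will be to show that exactly $\ell$ of these angles vanish, so that $\theta_1 = \cdots = \theta_\ell = 0$. I will deduce this from the singular value characterization \eqref{eqn:singular}: writing $A, B$ for matrices whose columns are orthonormal bases of $W_1, W_2$, one has $\cos\theta_i = \sigma_i$, where the $\sigma_i$ are the singular values of $A^T B$. A singular value $\sigma_i$ equals $1$ iff there exist unit vectors $u_i \in W_1$ and $v_i \in W_2$ with $\langle u_i, v_i \rangle = 1$, and by the equality case of the Cauchy--Schwarz inequality this forces $u_i = v_i \in W_1 \cap W_2$. Running this observation in reverse, starting from an orthonormal basis of $W_1 \cap W_2$ extended to orthonormal bases of $W_1$ and of $W_2$, shows that at least $\ell$ singular values equal $1$, so the count is exactly $\ell$.

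Granting this, $\theta_i = 0$ for $i \leq \ell$ while $\theta_i \leq \pi/2$ for $\ell < i \leq q$, so
\begin{equation*}
\sum_{i=1}^{q} \theta_i^2 \;=\; \sum_{i=\ell+1}^{q} \theta_i^2 \;\leq\; (q - \ell)\frac{\pi^2}{4}.
\end{equation*}
Substituting into the expression above yields $d_V(W_1, W_2)^2 \leq (p - \ell)\frac{\pi^2}{4} = (k - \ell)\frac{\pi^2}{4}$, and the claim follows upon taking square roots. The only non-trivial ingredient is the intersection-dimension identity for principal angles; the remaining bookkeeping is a one-line estimate, and the bound is in fact sharp (it is attained when $W_1 \subseteq W_2$ or vice versa).
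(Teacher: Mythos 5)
Your proof is correct and follows essentially the same route as the paper: both expand the Grassmann distance via Definition~\ref{def:grassmetric}, observe that $\ell$ of the principal angles vanish because they correspond to the intersection $W_1 \cap W_2$, and bound the remaining $\min(k_1,k_2)-\ell$ angles by $\pi/2$. The paper leaves the ``$\ell$ angles are zero'' step implicit (``from the definition of principle angles''), whereas you spell it out via the singular-value characterization and the Cauchy--Schwarz equality case; this added detail is correct, though for the inequality only the ``at least $\ell$'' direction is actually needed.
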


\begin{proof}
Let $k_1 = \dim W_1, k_2 = \dim W_2$.
It follows from  Definition~\ref{def:grassmetric}, and the definition of principle angles that
\begin{eqnarray*}
d_V(W_1,W_2) &\leq&  \frac{\pi}{2} \cdot \left( |k_1 - k_2| + \min(k_1,k_2) - \ell\right)^{1/2} \\
&=& \frac{\pi}{2} \cdot (k - \ell)^{1/2}.
\end{eqnarray*}
\end{proof}

We will also need the following lemma.
\begin{lemma}
\label{lem:singular}
Let $V$ be an Euclidean space and $W_1, W_2 \subset V$ be subspaces
having dimensions $p$ and $q$ respectively, with $p \geq q$. Let $L \subset W_2$ be a $1$-dimensional subspace.  
Let $\theta$ be the principal angle between $L$ and $W_1$, and $\theta_1 \leq \cdots \leq \theta_q$ be the principle angles between $W_1$ and $W_2$. Then, 
\[
\theta_1 \leq \theta \leq \theta_q.
\]
\end{lemma}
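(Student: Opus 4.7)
The plan is to translate the angle inequality into a cosine inequality, then into a Rayleigh quotient estimate, at which point the result reduces to a standard bound on extremal singular values.

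First I would fix orthonormal bases of $W_1$ and $W_2$ encoded as matrices $A$ (with $p$ columns) and $B$ (with $q$ columns), and set $M = A^T B \in \R^{p\times q}$. By \eqref{eqn:singular}, the singular values of $M$ are exactly $\cos\theta_1 \geq \cdots \geq \cos\theta_q$; the hypothesis $p \geq q$ is what guarantees that there are $q$ principal angles and that $M$ is a tall (or square) matrix, so $M^T M \in \R^{q\times q}$ has spectrum exactly $\{\cos^2\theta_1,\dots,\cos^2\theta_q\}$.

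Next I would handle the single angle $\theta$. Pick a unit vector $v_0$ spanning $L$; since $L \subset W_2$, we may write $v_0 = Bc$ for a unique unit vector $c \in \R^q$. The principal angle between $L$ and $W_1$ satisfies $\cos\theta = \|\proj_{W_1}(v_0)\|$, and since $A$ has orthonormal columns,
\[
\cos\theta \;=\; \|A A^T v_0\| \;=\; \|A^T v_0\| \;=\; \|A^T B c\| \;=\; \|M c\|.
\]
So $\cos^2\theta = c^T M^T M c$ is a Rayleigh quotient of $M^T M$ at the unit vector $c$.

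Applying the Rayleigh--Ritz characterization to the symmetric positive semidefinite operator $M^T M$, whose eigenvalues are $\cos^2\theta_1 \geq \cdots \geq \cos^2\theta_q$, yields
\[
\cos^2\theta_q \;\leq\; c^T M^T M c \;=\; \cos^2\theta \;\leq\; \cos^2\theta_1.
\]
Finally, all of $\theta,\theta_1,\dots,\theta_q$ lie in $[0,\pi/2]$, on which cosine is monotonically decreasing, so taking $\arccos$ reverses the inequalities and gives $\theta_1 \leq \theta \leq \theta_q$.

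The only subtle point (really a bookkeeping check rather than an obstacle) is to confirm that the $q$ singular values of $M$ account for \emph{all} the principal angles between $W_1$ and $W_2$ — i.e., that none of the $\theta_i$ are hiding in a kernel direction that would be paired with a zero singular value forced by dimension rather than geometry. This is exactly where $p \geq q$ is used: it ensures $M^T M$ is $q\times q$ and its eigenvalues are precisely the $\cos^2\theta_i$, with no extraneous zeros inserted by the convention for singular values of a non-square matrix.
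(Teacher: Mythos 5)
Your proof is correct, and it takes a genuinely different route than the paper. The paper handles the two inequalities asymmetrically: for the lower bound $\theta_1 \leq \theta$ it simply cites the variational characterization of principal angles, and for the upper bound $\theta \leq \theta_q$ it chooses an orthonormal basis $\{f_1,\dots,f_q\}$ of $W_2$ with $f_1$ spanning $L$, observes that the $p\times 1$ matrix $M' = P^T Q'$ (with $Q'$ the first column of $Q$) is a submatrix of $M = P^T Q$, and invokes Thompson's singular-value interlacing theorem to get $\sigma = \cos\theta \geq \sigma_q = \cos\theta_q$. You instead reduce both inequalities to the single Rayleigh--Ritz estimate $\lambda_{\min}(M^T M) \le c^T M^T M c \le \lambda_{\max}(M^T M)$, using the identity $\cos^2\theta = \|A^T v_0\|^2 = c^T M^T M c$ (which relies on the correct observation $\|AA^T v_0\| = \|A^T v_0\|$ for $A$ with orthonormal columns). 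This buys you a more unified and self-contained argument: a single standard fact (Rayleigh--Ritz for a symmetric matrix) delivers both bounds simultaneously, rather than an interlacing theorem for one side and an unproved citation for the other. Your side remark about why $p \ge q$ matters is also on point: it ensures $M^T M$ is $q\times q$ with spectrum exactly $\{\cos^2\theta_1,\dots,\cos^2\theta_q\}$, with no spurious zero eigenvalues coming from a dimension mismatch.
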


\begin{proof}
The first inequality follows from the variational characterization of the principle angles. We prove the 
second inequality. Let $\{e_1,\ldots,e_p\}$ be an orthonormal basis of
$W_1$.
Let $f = f_1$ be a unit length vector such that $\spanof(f) = L$, and let
$\{f_1,f_2,\ldots, f_q\}$ be an orthonormal basis of $W_2$.  Let $P$ and $Q$ be matrices whose columns are the coordinates of the $e_i$'s and the $f_i$'s respectively.
Let $Q'$ be the submatrix of $Q$ with consisting of the first column of $Q$ (i.e. the column of
coordinates of $f$).

Let $M = P^T Q, M' = P^T Q'$. Then, $M'$ is a $p \times 1$ submatrix of the $p \times q$ matrix $M$.
Let $\sigma$ be the singular value of $M'$, and $\sigma_1 \geq \cdots \geq \sigma_q$ the singular values of $M$.
It follows from Eqn.\ \eqref{eqn:singular} that 
\begin{eqnarray*}
\sigma &=& \cos \theta, \\
\sigma_i &=& \cos \theta_i, 1 \leq i \leq q.
\end{eqnarray*}
Finally it follows from 
the interlacing inequality  \cite[Theorem 10, Inequality (10)]{Thompson} that
$\sigma = \cos \theta  \geq \sigma_q = \cos \theta_q$, from which it follows that
$\theta \leq \theta_q$, since $0 \leq \theta,\theta_q \leq \pi/2$.
\end{proof}

\subsubsection{Stability theorem}
Using harmonic homology spaces we now define a distance function between the homology of two sub-complexes
of a fixed simplicial complex in any fixed dimension.

Let $K$ be a finite simplicial complex and $K_1,K_2$ be two sub-complexes. 

\begin{definition}
\label{def:distance-homology}
We define 
\[
d_{K,p}(K_1,K_2) =  d_{C_p(K)}(\mathcal{H}_p(K_1),\mathcal{H}_p(K_2)).
\]
\end{definition}

We are now in a position to make quantitative the intuitive idea that two sub-complexes of 
a fixed simplicial complex which are close to each other should have homology spaces that are also 
close. We prove the following theorem.

\begin{theorem}[Stability of harmonic homology]
\label{thm:stability-homology}
Let $K$ be a finite simplicial complex and $K_1,K_2$ sub-complexes of $K$. Then, for each $p \geq 0$,
\begin{equation}
\label{eqn:thm:stability-homology}
d_{K,p}(K_1,K_2) \leq   \frac{\pi}{2}\cdot \Delta_p(K_1,K_2)^{1/2}    
\end{equation}
where $\Delta_p(K_1,K_2)$ is defined to be the maximum of 
\[
\card\left(K_1^{\sqbracket{p}} \setminus   K_2^{\sqbracket{p}}\right) + \card\left(K_2^{\sqbracket{p+1}} \setminus   K_1^{\sqbracket{p+1}}\right),
\]
and
\[
\card\left(K_2^{\sqbracket{p}} \setminus  K_1^{\sqbracket{p}}\right) + \card\left(K_1^{\sqbracket{p+1}} \setminus K_2^{\sqbracket{p+1}}\right).
\]
\end{theorem}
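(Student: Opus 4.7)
The plan is to invoke Lemma~\ref{lem:subspace} with $V = C_p(K)$ and $W_i = \mathcal{H}_p(K_i)$ for $i=1,2$, so the task reduces to producing a subspace $U \subseteq \mathcal{H}_p(K_1)\cap\mathcal{H}_p(K_2)$ of codimension in $\mathcal{H}_p(K_i)$ bounded by the two cardinalities appearing in $\Delta_p(K_1,K_2)$. Let $K_0 = K_1 \cap K_2$, so that $K_0^{\sqbracket{p}} = K_1^{\sqbracket{p}} \cap K_2^{\sqbracket{p}}$ and $C_p(K_0) = C_p(K_1) \cap C_p(K_2)$. Observe that with respect to the standard inner product on $C_p(K)$, we have the orthogonal decomposition $C_p(K_1) = C_p(K_0) \oplus D$ where $D = \spanof\{\sigma : \sigma \in K_1^{\sqbracket{p}} \setminus K_2^{\sqbracket{p}}\}$.

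First I would cut down $\mathcal{H}_p(K_1)$ by intersecting with $C_p(K_0)$: set $W_1' := \mathcal{H}_p(K_1) \cap C_p(K_0)$. Since $W_1'$ is the kernel of the projection $\mathcal{H}_p(K_1) \to D$, its codimension in $\mathcal{H}_p(K_1)$ is at most $\dim D = \card(K_1^{\sqbracket{p}} \setminus K_2^{\sqbracket{p}})$. Next, let $E := \spanof\{\partial_{p+1}(\tau) : \tau \in K_2^{\sqbracket{p+1}} \setminus K_1^{\sqbracket{p+1}}\}$ and define $W_1'' := W_1' \cap E^\perp$. Again by a projection argument, the codimension of $W_1''$ in $W_1'$ is at most $\dim E \leq \card(K_2^{\sqbracket{p+1}} \setminus K_1^{\sqbracket{p+1}})$. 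So
\[
\dim \mathcal{H}_p(K_1) - \dim W_1'' \;\leq\; \card\!\left(K_1^{\sqbracket{p}} \setminus K_2^{\sqbracket{p}}\right) + \card\!\left(K_2^{\sqbracket{p+1}} \setminus K_1^{\sqbracket{p+1}}\right).
\]

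The key step is to verify that $W_1'' \subseteq \mathcal{H}_p(K_2)$. Since $W_1'' \subseteq W_1' \subseteq C_p(K_0)$ and $W_1'' \subseteq \mathcal{H}_p(K_1) \subseteq Z_p(K_1)$, the boundary of an element of $W_1''$ vanishes, and the element lies in $C_p(K_2)$, so $W_1'' \subseteq Z_p(K_2)$. For orthogonality, write an arbitrary generator $\partial_{p+1}(\tau)$ of $B_p(K_2)$ with $\tau \in K_2^{\sqbracket{p+1}}$ according to whether $\tau \in K_1^{\sqbracket{p+1}}$ (in which case $\partial_{p+1}(\tau) \in B_p(K_1)$ and is orthogonal to $W_1'' \subseteq \mathcal{H}_p(K_1) \perp B_p(K_1)$) or $\tau \in K_2^{\sqbracket{p+1}} \setminus K_1^{\sqbracket{p+1}}$ (in which case $\partial_{p+1}(\tau) \in E$ and is orthogonal to $W_1'' \subseteq E^\perp$). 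Hence $W_1'' \perp B_p(K_2)$, and so $W_1'' \subseteq \mathcal{H}_p(K_2)$. This is the technically delicate step, because the naive hope that the intersection of two harmonic spaces equals the harmonic space of the intersection complex fails in general, and we need the two-step restriction above to land inside both harmonic spaces.

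Thus $W_1'' \subseteq \mathcal{H}_p(K_1) \cap \mathcal{H}_p(K_2)$, giving the codimension estimate on the $K_1$ side. Running the entirely symmetric construction with the roles of $K_1$ and $K_2$ swapped yields the analogous bound on the $K_2$ side. Setting $k = \max(\dim \mathcal{H}_p(K_1), \dim \mathcal{H}_p(K_2))$ and $\ell = \dim(\mathcal{H}_p(K_1) \cap \mathcal{H}_p(K_2))$, whichever side attains the maximum $k$ supplies the correct inequality, so $k - \ell \leq \Delta_p(K_1,K_2)$. Plugging this into Lemma~\ref{lem:subspace} gives
\[
d_{K,p}(K_1,K_2) = d_{C_p(K)}(\mathcal{H}_p(K_1), \mathcal{H}_p(K_2)) \;\leq\; \frac{\pi}{2}\,(k-\ell)^{1/2} \;\leq\; \frac{\pi}{2}\,\Delta_p(K_1,K_2)^{1/2},
\]
which is precisely \eqref{eqn:thm:stability-homology}.
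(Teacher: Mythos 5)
Your proof is correct and follows essentially the route the paper takes: the paper's Lemma~\ref{lem:stability-homology} realizes your two cut-down steps as the null space of an augmented boundary matrix whose extra rows are indexed by exactly $K_1^{\sqbracket{p}}\setminus K_2^{\sqbracket{p}}$ and $K_2^{\sqbracket{p+1}}\setminus K_1^{\sqbracket{p+1}}$, so your $W_1''$ is precisely that null space, and the final appeal to Lemma~\ref{lem:subspace} is the same. If anything your write-up is slightly more careful than the paper's, since the paper simply asserts that the augmented null space equals $\mathcal{H}_p(K_1)\cap\mathcal{H}_p(K_2)$, while you verify the one containment $W_1''\subseteq\mathcal{H}_p(K_1)\cap\mathcal{H}_p(K_2)$ that the codimension estimate actually requires.
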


Before proving Theorem~\ref{thm:stability-homology} we first prove a lemma that we will need
in the proof of Theorem~\ref{thm:stability-homology} and also later in the paper.

\begin{lemma}
\label{lem:stability-homology}
Let $K$ be a finite simplicial complex and $K_1,K_2$ sub-complexes of $K$. Then, for each $p \geq 0$,
\begin{equation}
\label{eqn:lem:stability-homology:1}
\dim \mathcal{H}_p(K_1) - \dim (\mathcal{H}_p(K_1) \cap \mathcal{H}_p(K_2)) \leq
\card\left(K_1^{\sqbracket{p}} \setminus  K_2^{\sqbracket{p}}\right) + \card\left(K_2^{\sqbracket{p+1}} \setminus K_1^{\sqbracket{p+1}}\right),
\end{equation}

\begin{equation}
\label{eqn:lem:stability-homology:2}
 \dim \mathcal{H}_p(K_2) - \dim (\mathcal{H}_p(K_1) \cap \mathcal{H}_p(K_2)) \leq
\card\left(K_2^{\sqbracket{p}} \setminus  K_1^{\sqbracket{p}}\right) + \card\left(K_1^{\sqbracket{p+1}} \setminus  K_2^{\sqbracket{p+1}}\right).
\end{equation}
\end{lemma}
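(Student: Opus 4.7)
The plan is to prove inequality \eqref{eqn:lem:stability-homology:1} by rank--nullity applied to a carefully chosen linear map out of $\mathcal{H}_p(K_1)$ whose codomain has dimension equal to the right-hand side and whose kernel is exactly $\mathcal{H}_p(K_1)\cap\mathcal{H}_p(K_2)$. Inequality \eqref{eqn:lem:stability-homology:2} then follows by interchanging the roles of $K_1$ and $K_2$.

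Concretely, I would define a linear map
\[
\phi: \mathcal{H}_p(K_1) \longrightarrow \R^{K_1^{\sqbracket{p}}-K_2^{\sqbracket{p}}} \oplus \R^{K_2^{\sqbracket{p+1}}-K_1^{\sqbracket{p+1}}},
\]
whose first component sends $z$ to the tuple of its coordinates (in the standard basis of $C_p(K)$) at the simplices in $K_1^{\sqbracket{p}}-K_2^{\sqbracket{p}}$, and whose second component sends $z$ to the tuple $(\langle z,\partial_{p+1}(K)(\tau)\rangle_{C_p(K)})_\tau$ indexed by $\tau\in K_2^{\sqbracket{p+1}}-K_1^{\sqbracket{p+1}}$. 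Both components are manifestly linear, so $\phi$ is linear, and the dimension of its codomain is exactly the right-hand side of \eqref{eqn:lem:stability-homology:1}. Rank--nullity will then give
\[
\dim \mathcal{H}_p(K_1) - \dim \ker(\phi) \;=\; \dim \mathrm{Im}(\phi) \;\leq\; \card(K_1^{\sqbracket{p}}-K_2^{\sqbracket{p}}) + \card(K_2^{\sqbracket{p+1}}-K_1^{\sqbracket{p+1}}),
\]
so the whole task reduces to showing $\ker(\phi) = \mathcal{H}_p(K_1)\cap\mathcal{H}_p(K_2)$.

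For the inclusion $\mathcal{H}_p(K_1)\cap\mathcal{H}_p(K_2)\subset\ker(\phi)$, any $z$ in the intersection lies in $C_p(K_1)\cap C_p(K_2)=C_p(K_1\cap K_2)$, hence has zero coordinate on every simplex of $K_1^{\sqbracket{p}}-K_2^{\sqbracket{p}}$; moreover $z\perp B_p(K_2)$ by Proposition~\ref{prop:intersection} applied to $K_2$, so $\langle z,\partial_{p+1}(\tau)\rangle=0$ for every $\tau\in K_2^{\sqbracket{p+1}}$, including the $\tau$ indexing the second component. The harder and main inclusion is $\ker(\phi)\subset\mathcal{H}_p(K_1)\cap\mathcal{H}_p(K_2)$: for $z\in\ker(\phi)$, vanishing of the first component combined with $z\in C_p(K_1)$ forces $z$ to be supported on $K_1^{\sqbracket{p}}\cap K_2^{\sqbracket{p}}$, so $z\in C_p(K_2)$, and $\partial_p(z)=0$ (from $z\in\mathcal{H}_p(K_1)$) shows $z\in Z_p(K_2)$. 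To conclude $z\in\mathcal{H}_p(K_2)$ I must show $z\perp B_p(K_2)$, and this is where the two pieces of data encoded by $\phi$ combine: for $\tau\in K_1^{\sqbracket{p+1}}\cap K_2^{\sqbracket{p+1}}$ the orthogonality $\langle z,\partial_{p+1}(\tau)\rangle=0$ comes from $z\in\mathcal{H}_p(K_1)$ (again via Proposition~\ref{prop:intersection}), while for $\tau\in K_2^{\sqbracket{p+1}}-K_1^{\sqbracket{p+1}}$ it is exactly the vanishing of the second component of $\phi(z)$.

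The only technical subtlety worth watching is that orthogonality and boundary maps are taken consistently inside $C_p(K)$ (equipped with its standard inner product), so that restrictions to $C_p(K_i)$ agree with the respective inner products there; once this is noted, the argument is purely linear-algebraic and the symmetric inequality \eqref{eqn:lem:stability-homology:2} is immediate by swapping $K_1$ and $K_2$ in the construction of $\phi$.
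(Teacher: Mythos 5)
Your proposal is correct and follows essentially the same approach as the paper: both proofs characterize $\mathcal{H}_p(K_1)\cap\mathcal{H}_p(K_2)$ inside $\mathcal{H}_p(K_1)$ as the solution set of exactly $\card(K_1^{\sqbracket{p}}-K_2^{\sqbracket{p}})+\card(K_2^{\sqbracket{p+1}}-K_1^{\sqbracket{p+1}})$ additional linear constraints (vanishing of coordinates on $K_1^{\sqbracket{p}}-K_2^{\sqbracket{p}}$ and orthogonality to $\partial_{p+1}\tau$ for $\tau\in K_2^{\sqbracket{p+1}}-K_1^{\sqbracket{p+1}}$) and bound the codimension by the number of constraints. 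The paper packages this as augmented matrices $M'_p$, $M'_{p+1}$ with extra rows and compares nullspaces, while you package the extra constraints as a linear map $\phi$ and apply rank--nullity, but the underlying identification of the cutting-out conditions and the dimension count are identical.
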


\begin{proof}
We prove the inequality \eqref{eqn:lem:stability-homology:1}. The proof of inequality
\eqref{eqn:lem:stability-homology:2} is similar.

For each $p\geq 0$, and $i=1,2$, let $M_{p}(K_i)$ denote the matrix corresponding to the boundary map
$\partial_p: C_{p}(K_i) \rightarrow C_{p-1}(K_i)$ with respect to the orthonormal bases
$\mathcal{A}_p(K_i) = \left(\sigma\right)_{\sigma  \in K_i^{\sqbracket{p}}}$ of $C_p(K_i)$ and $
\mathcal{A}_{p-1}(K_i) = \left(\sigma\right)_{\sigma  \in K_i^{\sqbracket{p-1}}}$ 
of $C_{p-1}(K_i)$.

Note that the rows of $M_p(K_i)$ are indexed by $K_i^{\sqbracket{p-1}}$ and the columns of 
$M_p(K_i)$ are indexed by $K_i^{\sqbracket{p}}$.

Observe that 
$\mathcal{H}_p(K_1) \subset C_p(K_1)$ is the intersection of the nullspaces
of the matrices $M_{p}(K_1)$ and $M_{p+1}(K_1)^T$ i.e.
\[
z \in \mathcal{H}_p(K_1)  \Leftrightarrow [z]_{\mathcal{A}_p(K_1)}  \in \Ker(M_p(K_1)) \cap \Ker(M_{p+1}(K_1)^T).
\]

The subspace $\mathcal{H}_p(K_1) \cap \mathcal{H}_p(K_2)$ of $\mathcal{H}_p(K_1)$ is cut out 
of $\mathcal{H}_p(K_1)$ by additional equations.
Let $M'_p$ be the matrix whose columns are indexed by $K_1^{\sqbracket{p}}$ and whose rows are
indexed by $K_1^{\sqbracket{p-1}} \cup (K_1^{\sqbracket{p}} \setminus K_2^{\sqbracket{p}})$ defined by

\begin{eqnarray*}
(M'_p)_{\sigma,\sigma'} &=& M_p(K_1)_{\sigma,\sigma'} \mbox{ if $\sigma\in K_1^{\sqbracket{p-1}}, \sigma' \in K_1^{\sqbracket{p}}$,} \\
&=& 1, \mbox{ if $\sigma = \sigma'\in K_1^{\sqbracket{p}} \setminus  K_2^{\sqbracket{p}}$,} \\
&=& 0, \mbox{ otherwise.}
\end{eqnarray*}

Similarly, 
Let $M'_{p+1}$ be the matrix whose columns are indexed by $K_1^{\sqbracket{p+1}} \cup K_2^{\sqbracket{p+1}}$ and whose rows are
indexed by $K_1^{\sqbracket{p}}$ be defined by

\begin{eqnarray*}
(M'_{p+1})_{\sigma,\sigma'} &=& M_{p+1}(K_1)_{\sigma,\sigma'}, \mbox{ if $\sigma\in K_1^{\sqbracket{p}}, \sigma' \in K_1^{\sqbracket{p+1}}$,} \\
&=& M_{p+1}(K_2)_{\sigma,\sigma'}, \mbox{ if $\sigma\in K_1^{\sqbracket{p}}, \sigma' \in K_2^{\sqbracket{p+1}} \setminus   K_1^{\sqbracket{p+1}} $.} 
\end{eqnarray*}

Then,
\[
z \in \mathcal{H}_p(K_1) \cap  \mathcal{H}_p(K_2)  \Leftrightarrow [z]_{\mathcal{A}_p(K_1)}  \in \Ker(M_p') \cap \Ker(M_{p+1}'^T).
\]

Observe that $M'_p$ contains $M_p(K_1)$ as a submatrix and has $\card(K_1^{\sqbracket{p}} \setminus  K_2^{\sqbracket{p}})$ extra rows.
Similarly, $M_{p+1}'^T$ has $M_{p+1}(K_1)^T$ as a submatrix and has   
$\card(K_2^{\sqbracket{p+1}} \setminus   K_1^{\sqbracket{p+1}})$ extra rows.

It follows that the codimension of $\mathcal{H}_p(K_1) \cap  \mathcal{H}_p(K_2)$ in 
$\mathcal{H}_p(K_1)$ is bounded by 
\[
\card\left(K_1^{\sqbracket{p}} \setminus   K_2^{\sqbracket{p}}\right) + \card\left(K_2^{\sqbracket{p+1}} \setminus  K_1^{\sqbracket{p+1}}\right),
\]
which completes the proof of 
inequality \eqref{eqn:lem:stability-homology:1}.
\end{proof}

\begin{proof}[Proof of Theorem~\ref{thm:stability-homology}]
The theorem  now follows from Lemma~\ref{lem:subspace}, and inequalities \eqref{eqn:lem:stability-homology:1}, 
\eqref{eqn:lem:stability-homology:2} in Lemma~\ref{lem:stability-homology}.
\end{proof}

\begin{corollary}
\label{cor:stability-homology}
With the same assumptions as in Theorem~\ref{thm:stability-homology}:
\begin{equation}
\label{eqn:cor:stability-homology}   
d_{K,p}^2(K_1,K_2) \leq   \frac{\pi^2}{4}\cdot\left( \sum_{\sigma \in K^{\sqbracket{p}} \cup K^{\sqbracket{p+1}}} |\chi_{K_1}(\sigma) - \chi_{K_2}(\sigma)|        \right),
\end{equation}
where for any sub-complex $K'$ of $K$ we denote by 
$\chi_{K'}(\cdot)$ the characteristic function of  $K'$ (considered as a set of simplices). 
\end{corollary}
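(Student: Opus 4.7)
The plan is to derive the corollary directly from Theorem~\ref{thm:stability-homology} by bounding $\Delta_p(K_1,K_2)$ by the symmetric-difference sum appearing on the right-hand side of \eqref{eqn:cor:stability-homology}.

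First, I would square both sides of \eqref{eqn:thm:stability-homology} to obtain
\[
d_{K,p}^2(K_1,K_2) \;\leq\; \frac{\pi^2}{4}\cdot \Delta_p(K_1,K_2).
\]
So it suffices to show that
\[
\Delta_p(K_1,K_2) \;\leq\; \sum_{\sigma \in K^{\sqbracket{p}} \cup K^{\sqbracket{p+1}}} |\chi_{K_1}(\sigma) - \chi_{K_2}(\sigma)|.
\]

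Next, I would unpack the definition of $\Delta_p(K_1,K_2)$: it is the maximum of two non-negative integers, each of which is of the form $\card(K_i^{\sqbracket{p}}\setminus K_j^{\sqbracket{p}}) + \card(K_j^{\sqbracket{p+1}}\setminus K_i^{\sqbracket{p+1}})$ for $\{i,j\}=\{1,2\}$. Since the maximum of two non-negative numbers is bounded by their sum, I get
\[
\Delta_p(K_1,K_2) \;\leq\; \card(K_1^{\sqbracket{p}}\triangle K_2^{\sqbracket{p}}) + \card(K_1^{\sqbracket{p+1}}\triangle K_2^{\sqbracket{p+1}}),
\]
where $\triangle$ denotes symmetric difference.

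Finally, I would observe that for a simplex $\sigma \in K^{\sqbracket{p}}\cup K^{\sqbracket{p+1}}$, the quantity $|\chi_{K_1}(\sigma) - \chi_{K_2}(\sigma)|$ equals $1$ precisely when $\sigma$ belongs to exactly one of $K_1, K_2$, and $0$ otherwise. Hence
\[
\card(K_1^{\sqbracket{p}}\triangle K_2^{\sqbracket{p}}) + \card(K_1^{\sqbracket{p+1}}\triangle K_2^{\sqbracket{p+1}}) = \sum_{\sigma \in K^{\sqbracket{p}} \cup K^{\sqbracket{p+1}}} |\chi_{K_1}(\sigma) - \chi_{K_2}(\sigma)|,
\]
and combining the displays yields \eqref{eqn:cor:stability-homology}. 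There is no real obstacle here; this is just repackaging Theorem~\ref{thm:stability-homology} in a form that bounds the distance in terms of a single sum over simplices counted with characteristic-function differences, which will be more convenient for later integration/normalization arguments.
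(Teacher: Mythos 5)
Your proof is correct and follows essentially the same route as the paper: square the inequality from Theorem~\ref{thm:stability-homology} and bound $\Delta_p(K_1,K_2)$ by the symmetric-difference sum, which the paper dismisses as ``easy to see'' and you spell out explicitly (max of two non-negative quantities bounded by their sum, then identification of the four cardinalities with the characteristic-function sum). Nothing is missing; you have merely supplied the elementary verification the paper omits.
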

\begin{proof}
It is easy to see that the quantity $\Delta_p(K_1,K_2)$ defined in Theorem~\ref{thm:stability-homology} is bounded from above by 
\[
\sum_{\sigma \in K^{\sqbracket{p}} \cup K^{\sqbracket{p+1}}} |\chi_{K_1}(\sigma) - \chi_{K_2}(\sigma)|.
\]
The corollary 
is now an immediate consequence of Theorem~\ref{thm:stability-homology} after squaring both sides
of the inequality \eqref{eqn:thm:stability-homology}.
\end{proof}

\begin{remark}
\label{rem:crude}
The bound in  Theorem~\ref{thm:stability-homology} is a bit crude in that it only depends on the
dimension of the intersection of the two subspaces $\mathcal{H}_p(K_1)$ and $\mathcal{H}_p(K_2)$.
However, even if the dimension of the intersection stays constant, the principal angles between the
two subspaces give a measure of how ``close'' these two subspaces are. The following example
is instructive.
\end{remark}

\begin{example}
\label{eg:ladder}

\begin{figure}
    \centering
    \includegraphics[scale=0.9]{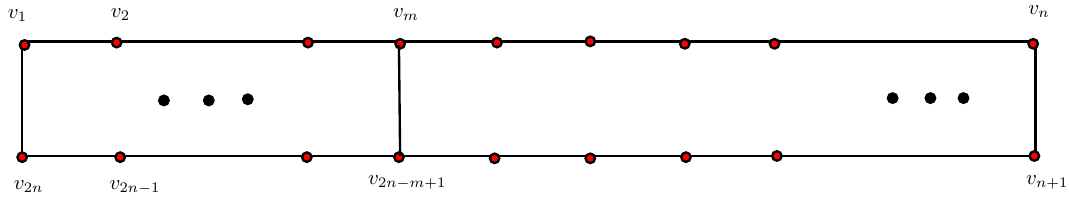}
    \caption{Convergence of harmonic homology subspaces }
    \label{fig:ladder}
\end{figure}

\begin{figure}
    \centering
    \includegraphics[scale=0.8]{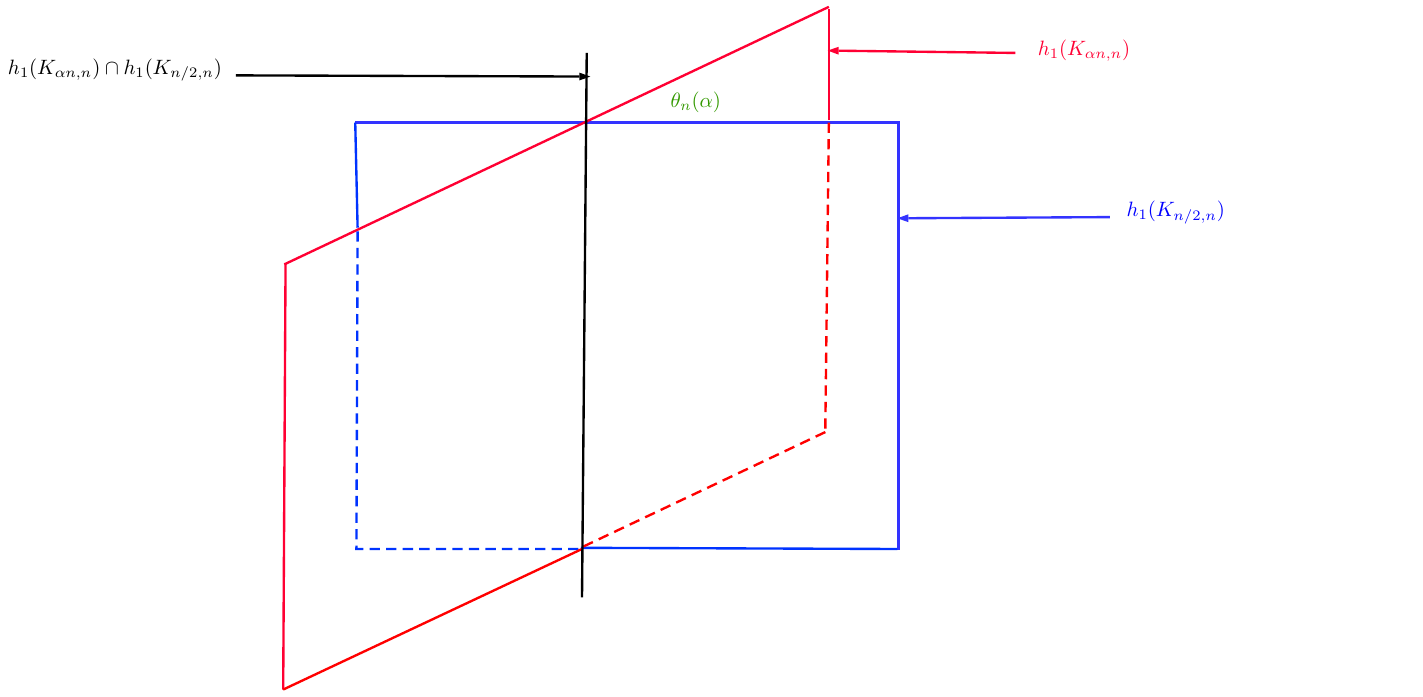}
    \caption{The subspaces $\mathcal{H}_1(K_{\alpha n,n})$ and $\mathcal{H}_1(K_{n/2,n})$.}
    \label{fig:ladder-subspaces}
\end{figure}

Let $n > 1$ be a fixed integer, and 
for $1 < m < n$ let $K_{m,n}$ denote the one dimensional simplicial complex depicted in 
Figure~\ref{fig:ladder}. Then, $\mathcal{H}_1(K_{m,n})$ is a two-dimensional subspace 
of $C_1(K)$, where $K$ denotes the union over $m, 1 < m < n$,  of all the $K_{m,n}$ as sub-complexes.

Now, the harmonic homology spaces $\mathcal{H}_1(K_{m,n})$ have a one-dimensional
subspace in common -- namely, the subspace spanned by the vector 
\[
w=[v_1,v_2] + [v_2,v_3] +\cdots +  [v_{2n-1},v_{2n}] + [v_{2n},v_1].
\]

For $0 < \alpha < \frac{1}{2}$, we consider the subspaces $\mathcal{H}_1(K_{\lfloor \alpha n\rfloor ,n})$ and 
$\mathcal{H}_1(K_{\lfloor n/2 \rfloor ,n})$. Note there is at most one non-zero principal angle 
between these subspaces since they have a one-dimensional subspace in common. Let $\theta_n(\alpha)$ denote this principal angle.
Intuitively,
one should expect that as $\alpha \rightarrow 1/2$, 
the subspaces $\mathcal{H}_1(K_{\lfloor \alpha n\rfloor ,n})$ and 
$\mathcal{H}_1(K_{\lfloor n/2 \rfloor ,n})$ should come closer to each other and
the non-zero principal angle between them should go to $0$.
Let $K_1=K_{\lfloor \alpha n\rfloor, n}$ and $K_2=K_{\lfloor n/2\rfloor, n}$. Since we are going to take the limit as $n$ goes to infinity, the floor function becomes irrelevant.

With this caveat a standard calculation which we omit shows that
\[
\cos(\theta_n(\alpha))=\dfrac{\alpha n}{\left( 2\alpha n(1-\alpha)^2 + (2n)(1-\alpha)\alpha^2 +1\right)^{\frac{1}{2}}\left(\frac{n}{2}+1 \right)^{\frac{1}{2}}
}.
\]
This implies that
$
\lim_{n \rightarrow \infty} \cos \theta_n(\alpha) = \left(\frac{\alpha}{1-\alpha}\right)^{1/2}
$,
which agrees with the intuition expressed before, since clearly
the angle $\theta_n(\alpha)$ goes to $0$, as $\alpha$ approaches $1/2$.
\end{example}

\section{Harmonic persistent homology and harmonic barcodes of a filtration}
\label{sec:harmonic-persistent-homology}
In this section we define harmonic persistent homology and the associated harmonic persistent barcodes
and prove stability results.
We first recall the notion of persistent homology and also the definition of the associated barcodes.
There is some subtlety in the definition of the latter that is explained in the following section.

\subsection{Persistent homology and barcodes}
\label{subsec:persistent}
Let $T$ be an ordered set 
(with or without  endpoints),   and
$\mathcal{F} = (K_t)_{t \in T}$,    a tuple of sub-complexes of a  finite simplicial complex $K$, such that
$s \leq t \Rightarrow K_s \subset K_t$.    We call $\mathcal{F}$ a filtration
of the  simplicial complex $K$.

\begin{convention}
\label{convention:1}
If the ordered set $T$  has end points, $a = \min(T), b = \max(T)$ (for example if $T$ is finite), then
we will formally adjoin two new elements $-\infty$ and $\infty$ to $T$ with $-\infty < c, \infty > c$,
for every $c \in T$,
and adopt the convention that $K_{-\infty} = \emptyset$ and $K_\infty = K_b$ (or equivalently
that $K_s = \emptyset$ for $s < a$ and $K_t = K_b$ for $t > b$).
\end{convention}

\begin{notation}
\label{not:inclusion}

  For $s, t \in T,   s \leq t$, and $p \geq 0$, we let $i_p^{s, t} : \HH_p (K_s) \longrightarrow
  \HH_p (K_t)$, denote the homomorphism induced by the inclusion $K_s
  \hookrightarrow K_t$.
\end{notation}

\begin{definition}[Persistent homology groups]
\label{def:persistent}
  {\cite{Edelsbrunner_survey}} For each triple $(p, s, t)  \in \Z_{\geq 0} \times T  \times T$ with 
$s \leq t$ 
  the   
  \emph{persistent homology  group},       
  $\HH_p^{s, t} (\mathcal{F})$ is defined by
  \begin{eqnarray*}
    \HH_p^{s, t} (\mathcal{F}) & = &  \mathrm{Im} (i_p^{s, t}).
  \end{eqnarray*}
  Note that $\HH_p^{s, t} (\mathcal{F}) \subset \HH_p (K_t)$,    
  and 
  $\HH_p^{s, s}(\mathcal{F}) = \HH_p (K_s)$.
\end{definition}
\begin{notation}[Persistent Betti numbers]
  We denote by $b_p^{s, t} (\mathcal{F}) = \dim_\R(\HH_p^{s, t}(\mathcal{F}))$, and call
 $b_p^{s,t}(\mathcal{F})$ $(p,s,t)$-th persistent Betti number.
 \end{notation}
 We now define barcode of a filtration.
 
\subsubsection{Barcodes of filtrations}

\begin{definition}
\label{def:birth-death}
In the lingua franca of the theory of persistent homology, for $s \leq t \in T$, and $ p \geq 0$,
	\begin{itemize}
		\item  we say that a homology class $\gamma \in \HH_p(K_{s})$ is \emph{born} at time $s$, if $\gamma \notin \HH_{p}^{s',s}(\mathcal{F})$, for any $s' < s$;
		\item for a class $\gamma \in \HH_p(K_{s})$ born at time $s$, we say that $\gamma$ \emph{dies} at time $t$, if  $i_p^{s,t'}(\gamma) \notin \HH_{p}^{s',t'}(\mathcal{F})$ for all $s', t'$ such that $s' < s < t' < t$, but $i_p^{s,t}(\gamma) \in \HH_{p}^{s'',t}(\mathcal{F})$, for some $s'' < s$.
	\end{itemize}  	
\end{definition}

First observe that it follows from Definition~\ref{def:persistent} that for all $s' \leq s \leq t$ and $p \geq 0$, 
$\HH_p^{s',t}(\mathcal{F})$ is a subspace of $\HH^{s,t}_p(\mathcal{F})$, and both are subspaces of
$\HH_p(K_t)$. This is because the homomorphism $i^{s',t}_p = i^{s,t}_p\circ i^{s',s}_p$, and so the image of 
$i^{s',t}_p$ is contained in the image of $i^{s,t}_p$.
It follows that, for $s \leq t$,  
$\sum_{s' < s} \HH^{s',t}_p(\mathcal{F}) = \bigcup_{s' < s} \HH^{s',t}_p(\mathcal{F})$ 
since the subspaces $\HH^{s',t}$ satisfy that  $\HH^{s',t} \subset \HH^{s'',t}$ for all $s' \leq s'' < s$.
In particular, setting $t=s$,  
$\sum_{s' < s} \HH^{s',s}(\mathcal{F})$ is a subspace of $\HH_p(K_s)$. \\

The following definitions are taken from \cite{Basu-Karisani} (see also the references therein and \cite[Theorem 1]{Ghrist-Henselman}).
We follow  the same notation as above and first define certain subspaces of the homology
groups $\HH_p(K_s), s \in T, p \geq 0$.

\begin{definition}[Subspaces of $\HH_p(K_s)$]
\label{def:barcode}
For $s \leq t$, and $p \geq 0$, we define
\begin{eqnarray*}
M^{s,t}_p(\mathcal{F}) &=& \sum_{s' < s} (i^{s,t}_p)^{-1}(\HH^{s',t}_p(\mathcal{F})), \\
M^{s,\infty}_p(\mathcal{F}) &=& \sum_{s \leq t' } M^{s,t'}_p(\mathcal{F}), \\
N^{s,t}_p(\mathcal{F}) &=& \sum_{s' < s\leq t' < t} (i^{s,t'}_p)^{-1}(\HH^{s',t'}_p(\mathcal{F})). \\
\end{eqnarray*}
\end{definition}

\begin{remark}
\label{rem:def:barcode:finite}
Definition~\ref{def:barcode} does not assume anything about the ordered indexing set $T$.
In the special case when $T$ is finite, for example if $T = [0,N] \subset \mathbb{Z}$, the definition of the
subspaces in Definition~\ref{def:barcode} simplifies and we have for 
$0 \leq s < t \leq N$, 
and $p \geq 0$,
\begin{eqnarray*}
M^{s,t}_p(\mathcal{F}) &=& (i^{s,t}_p)^{-1}(\HH^{s-1,t}_p(\mathcal{F})), \\
M^{s,\infty}_p(\mathcal{F}) &=& M^{s,N}_p(\mathcal{F}), \\
N^{s,t}_p(\mathcal{F}) &=&  (i^{s,t-1}_p)^{-1}(\HH^{s-1,t-1}_p(\mathcal{F}))\\
                        &=&  M_p^{s,t-1}(\mathcal{F}).
\end{eqnarray*}
\end{remark}

The ``meaning''  of the subspaces $M^{s,t}_p(\mathcal{F}), N^{s,t}_p(\mathcal{F})$ 
in terms of birth and death of homological cycles
(as per Definition~\ref{def:birth-death}) is encapsulated in the following proposition.

\begin{proposition}
	\label{prop:barcode_subspace_mean}
		\begin{remunerate}
		 
			\item
			\label{itemlabel:prop:barcode_subspace_mean:1} 
			For every  
   $s,t \in T, s < t $, 
   $M^{s,t}_p(\mathcal{F})$  is  a subspace of  $\HH_p(K_s)$ consisting of homology classes in $\HH_p(K_s)$ which are either $0$
			or 
			\begin{center}
				``are either born before time $s$, or born at time $s$ and die at time $t$ or earlier''. 
			\end{center}
			
			\item
			\label{itemlabel:prop:barcode_subspace_mean:2} 
			For every fixed $s \in T$,  $M^{s,\infty}_p(\mathcal{F})$ is a subspace of $\HH_p(K_s)$ consisting of homology classes in $\HH_p(K_s)$ which are either $0$ or
			\begin{center}
				``are either born before time $s$, or born at time $s$ and die at some time $t \geq s$''. 
			\end{center}

			\item
			\label{itemlabel:prop:barcode_subspace_mean:3} 
			Similarly,
			for every   
   $s,t  \in T, s < t$,
   $N^{s,t}_p(\mathcal{F})$  is  a subspace of $\HH_p(K_s)$ consisting of homology classes in $\HH_p(K_s)$ which are either $0$ or
			\begin{center}
				``are either born before time $s$, or born at time $s$ and die strictly earlier than $t$''.
			\end{center}
		\end{remunerate}
\end{proposition}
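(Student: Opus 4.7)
The plan is to prove each of the three parts as a set equality by checking both inclusions, using only Definition~\ref{def:persistent} (which identifies $\HH^{s',t}_p(\mathcal{F})$ with $\mathrm{Im}(i^{s',t}_p)$), the functorial equality $i^{s',t}_p = i^{s,t}_p \circ i^{s',s}_p$ valid whenever $s' \leq s \leq t$, and the precise meaning of ``born'' and ``dies'' given in Definition~\ref{def:birth-death}. The whole argument is a careful case analysis on the birth time of a class $\gamma \in \HH_p(K_s)$, which falls into exactly three regimes: $\gamma = 0$, $\gamma$ is born at some $s'' < s$, or $\gamma$ is born at time $s$. In the last case one must further ask whether and when $\gamma$ dies.

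For part \eqref{itemlabel:prop:barcode_subspace_mean:1}, to show the ``$\supseteq$'' direction I would handle the three regimes separately. If $\gamma = 0$, trivially $i^{s,t}_p(\gamma) = 0 \in \HH^{s',t}_p(\mathcal{F})$ for any $s' < s$, so $\gamma \in M^{s,t}_p(\mathcal{F})$. If $\gamma$ is born at some $s'' < s$, then by definition $\gamma \in \HH^{s'',s}_p(\mathcal{F}) = \mathrm{Im}(i^{s'',s}_p)$, so $\gamma = i^{s'',s}_p(\gamma'')$ for some $\gamma''$, and functoriality gives $i^{s,t}_p(\gamma) = i^{s'',t}_p(\gamma'') \in \HH^{s'',t}_p(\mathcal{F})$; taking $s' = s''$ shows $\gamma \in M^{s,t}_p(\mathcal{F})$. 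If $\gamma$ is born at $s$ and dies at some $t'' \leq t$, then by Definition~\ref{def:birth-death} there is $s^* < s$ with $i^{s,t''}_p(\gamma) \in \HH^{s^*,t''}_p(\mathcal{F})$; post-composing with $i^{t'',t}_p$ and using functoriality again yields $i^{s,t}_p(\gamma) \in \HH^{s^*,t}_p(\mathcal{F})$, hence $\gamma \in M^{s,t}_p(\mathcal{F})$. The ``$\subseteq$'' direction proceeds by taking $\gamma$ in the union and, if $\gamma \neq 0$, noting that the existence of $s' < s$ with $i^{s,t}_p(\gamma) \in \HH^{s',t}_p(\mathcal{F})$ immediately excludes any birth time $> s$ and, if the birth time equals $s$, certifies (by the definition of death) that $\gamma$ has died by time $t$.

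For part \eqref{itemlabel:prop:barcode_subspace_mean:2}, I would just take the union over $t' \geq s$ of the statement proved in part \eqref{itemlabel:prop:barcode_subspace_mean:1}: a class lies in $M^{s,\infty}_p(\mathcal{F})$ exactly when it lies in $M^{s,t'}_p(\mathcal{F})$ for some $t' \geq s$, which by part~\eqref{itemlabel:prop:barcode_subspace_mean:1} means it is $0$, born strictly before $s$, or born at $s$ and dies at some $t'' \leq t'$; the quantification over $t'$ then absorbs the finite-death classes and imposes no extra condition on pre-$s$ births. Part \eqref{itemlabel:prop:barcode_subspace_mean:3} is proved by essentially the same argument as part \eqref{itemlabel:prop:barcode_subspace_mean:1} but with the range of $t'$ restricted to $s \leq t' < t$, so that the death time is strictly less than $t$; the one point to verify carefully is that for a class born at $s$, the condition $\gamma \in (i^{s,t'}_p)^{-1}(\HH^{s',t'}_p(\mathcal{F}))$ for some $s' < s \leq t' < t$ translates exactly into having a death time strictly smaller than $t$, which follows by comparing to the ``dies at $t$'' clause of Definition~\ref{def:birth-death}.

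The main obstacle is purely one of bookkeeping: making sure the quantifier order in Definition~\ref{def:birth-death} (``for all $s' < s < t' < t$ \ldots but for some $s'' < s$ \ldots'') is matched correctly against the union quantifiers appearing in Definition~\ref{def:barcode}, and that the degenerate cases ($\gamma = 0$, birth exactly at $s$ with no death, and death at the indexing endpoint $\infty$ under Convention~\ref{convention:1}) are all covered. No new machinery is needed; the proof is essentially a translation between the image/preimage language of the definition and the born/dies language of the statement.
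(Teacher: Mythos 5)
Your proof is correct and takes essentially the same route as the paper's: a case analysis on the birth time of $\gamma$, translating directly between the image/preimage language of Definition~\ref{def:barcode} and the born/dies language of Definition~\ref{def:birth-death}, with parts~\eqref{itemlabel:prop:barcode_subspace_mean:2} and \eqref{itemlabel:prop:barcode_subspace_mean:3} reduced to part~\eqref{itemlabel:prop:barcode_subspace_mean:1} by taking the appropriate unions over $t'$. If anything your handling of the ``born at $s$ and dies at $t''\leq t$'' case in the $\supseteq$ direction is slightly more careful than the paper's, which asserts $i^{s,t}_p(\gamma)=0$ and thereby overlooks the possibility that $\gamma$ dies by merging with an older class rather than by vanishing; your use of functoriality to land $i^{s,t}_p(\gamma)$ in $\HH^{s^*,t}_p(\mathcal{F})$ is the precise argument.
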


\begin{proof}
\begin{remunerate}
    \item 
    Let $0 \neq \gamma \in  M^{s,t}_p(\mathcal{F})$. 
    
    Since $M^{s,t}_p(\mathcal{F}) \subset \HH_p(K_s)$, and
    every element of $\HH_p(K_s)$ is born at time $s$ or earlier, it is clear that $\gamma$ is born 
    at time $s$ or earlier.
    
    If $s = \min(T)$, then using Convention~\ref{convention:1}  $\HH^{s',t}_p(\mathcal{F}) = 0$ for 
    $s' < s$, and $\gamma \in \Ker(i^{s,t}_p)$. This implies that $\gamma$ is born at time $s$ and
    dies at or before time $t$.
    
    If $s \neq \min(T)$, then
    it follows from Definition~\ref{def:barcode} that 
    \[
    \gamma \in M^{s,t}_p(\mathcal{F}) \Leftrightarrow 
    \exists s' < s \mbox{ such that } i^{s,t}_p(\gamma) \in \HH^{s',t}_p(\mathcal{F}) = \im(i^{s',t}_p).
    \]
    This implies as per Definition~\ref{def:birth-death} that
    
    Now $i^{s',t}_p = i^{s,t}_p \circ i^{s',s}_p$. So there exists $s' < s$ and $\gamma' \in \HH_p(K_{s'})$
    such that 
    \begin{equation}
    \label{eqn:prop:barcode_subspace_mean:proof:1}
    i^{s',t}_p(\gamma') = i^{s,t}_p \circ i^{s',s}_p(\gamma') = i^{s,t}_p(\gamma).
    \end{equation}

    If there exists $\gamma'$ satisfying 
    \eqref{eqn:prop:barcode_subspace_mean:proof:1}  such that
    $i^{s',s}_p(\gamma') = \gamma$
    then $\gamma$ is born before time $s$.
    
    Else, $s$ is born at time $s$.  
    Moreover, the image of $\gamma$ in $\HH^{s,t}_p(\mathcal{F})$ equals the image of
    $\gamma'$ in $\HH^{s,t}_p(\mathcal{F})$ for some $\gamma' \in \HH_p(K_{s'})$. 
    It follows that $\gamma$ is born at time $s$ and dies at time $t$ or earlier.
    
    Conversely, suppose that  $\gamma \in \HH_p(K_s)$, and
    either $\gamma$ is born before time $s$,  or $\gamma$ is born at time $s$ and dies at time $t$ or earlier.
    
    There are two possibilities. Either $\gamma$ is born before time $s$. In this case,
    there exists $s' < s$ and $\gamma' \in \HH_p(K_{s'})$ such that
    $i^{s,t}_p(\gamma) = i^{s',t}_p(\gamma')$ which implies that 
    $\gamma \in M^{s,t}_p(\mathcal{F})$. 
    
    Or, $\gamma$ is born at time $s$ and dies at time $t$ or earlier. In this case, 
    $i^{s,t}_p(\gamma) = 0$, which implies that $\gamma \in M^{s,t}_p(\mathcal{F})$.
    
    This proves Part~\eqref{itemlabel:prop:barcode_subspace_mean:1}.
    \item
    Part~\eqref{itemlabel:prop:barcode_subspace_mean:2} is follows directly from Part~\eqref{itemlabel:prop:barcode_subspace_mean:1}.
    \item
    It is clear from the definition that each $\gamma \in N^{s,t}_p(\mathcal{F})$ belongs to
    $M^{s,t'}_p(\mathcal{F})$ for some $t'$ with $s\leq t' < t$. 
    Part~\eqref{itemlabel:prop:barcode_subspace_mean:3} now follows from 
    Part~\eqref{itemlabel:prop:barcode_subspace_mean:1}.
\end{remunerate}
\end{proof}

We now define certain \emph{subquotients} of the homology groups of $\HH_p(K_s), s \in T, p \geq 0$,
in terms of the subspaces defined above in Definition~\ref{def:barcode}. 
These subquotients (which are vector spaces and therefore 
have well defined dimensions) are in bijection with equivalence classes
of homology classes ``born at time $s$ and which die at time $t$''.

\begin{definition}[Subquotients associated to a filtration]
\label{def:barcode1}
For 
$s < t$
and $p \geq 0$, we define
\begin{eqnarray*}
P^{s,t}_p(\mathcal{F}) &=& M^{s,t}_p(\mathcal{F})/N^{s,t}_p(\mathcal{F}), \\
P^{s,\infty}_p(\mathcal{F}) &=&  
\HH_p(K_s) / M^{s,\infty}_p(\mathcal{F}).
\end{eqnarray*}

We will call 
\begin{remunerate}
\item
$P^{s,t}_p(\mathcal{F})$ the \emph{space of $p$-dimensional cycles  born at time $s$ and which dies at time $t$};
and 
\item
$P^{s,\infty}_p(\mathcal{F})$ the \emph{space of $p$-dimensional cycles born at time $s$ and which never die}.
\end{remunerate}
\end{definition}

\begin{definition}[Persistent multiplicity, barcode, simple bars]
\label{def:barcode2}
We will denote for $s \in T,  t \in T \cup \{\infty \}$,
\begin{equation}
\label{eqn:def:barcode:multiplicity}
\mu^{s,t}_p(\mathcal{F}) = \dim P^{s,t}_p(\mathcal{F}),
\end{equation}
and call $\mu^{s,t}_p(\mathcal{F})$ the \emph{persistent multiplicity of $p$-dimensional cycles born at time $s$ and dying at time $t$ if $t \neq \infty$,   or never dying in case $t = \infty$}.

Finally, we will call the set
\[
\mathbf{B}_p(\mathcal{F}) = \{(s,t,\mu^{s,t}_p(\mathcal{F})) \mid \mu^{s,t}_p(\mathcal{F}) > 0\}
\]
\emph{the $p$-dimensional barcode associated to the filtration $\mathcal{F}$}. 

We will call an element
$b = (s,t,\mu^{s,t}_p(\mathcal{F})) \in \mathbf{B}_p(\mathcal{F})$ a \emph{bar of $\mathcal{F}$ of multiplicity
$\mu^{s,t}_p(\mathcal{F}$}). 
If $\mu^{s,t}_p(\mathcal{F})=1$, we will call $b$ a \emph{simple} bar.
\end{definition}

\subsection{Harmonic persistent homology and harmonic barcodes}
\label{subsec:harmonic-peristent-homology}
We now define the harmonic versions of persistent homology groups. These will all be subspaces
of $C_p(K), p \geq 0$.

Using Proposition~\ref{prop:functorial} we have for each $s,t \in T,s \leq t$, a linear map
\[
\mathfrak{i}_p^{s,t} := \mathrm{proj}_{B_p(K_t)^\perp}|_{\mathcal{H}_p(K_s)} : \mathcal{H}_p(K_s) \rightarrow \mathcal{H}_p(K_t),
\]

which makes the following diagram commute
\begin{equation}
\label{eqn:commutative}
\xymatrix{
\HH_p(K_s) \ar[r]^{i_p^{s,t}} \ar[d]^{\mathfrak{f}_p(K_s)} & \HH_p(K_t) \ar[d]^{\mathfrak{f}_p(K_t)} \\
\mathcal{H}_p(K_s) \ar[r]^{\mathfrak{i}_p^{s,t}} &\mathcal{H}_p(K_t)
}.
\end{equation}

\begin{definition}[Harmonic persistent homology subspaces]
\label{def:harmonic-persistent}
For each triple $(p, s, t)  \in \Z_{\geq 0} \times T  \times T$ with 
$s \leq t$ 
  the   
  \emph{harmonic persistent homology  subspace},       
  $\mathcal{H}_p^{s, t} (\mathcal{F})$ is defined by
  \begin{eqnarray*}
    \mathcal{H}_p^{s, t} (\mathcal{F}) & = &  \mathrm{Im} (\mathfrak{i}_p^{s, t}(\mathcal{F})) \subset C_p(K).
  \end{eqnarray*}
\end{definition}

\subsubsection{Harmonic barcodes of  filtrations}
We now give the harmonic analogs of the above spaces. They are all subspaces of $C_p(K)$ (in fact, of 
the various harmonic homology spaces $\mathcal{H}_p(K_s), s \in T$).

\begin{definition}[Harmonic barcode of a filtration]
\label{def:harmonic-barcode}
For $s \leq t$, and $p \geq 0$, we define
\begin{eqnarray*}
\mathcal{M}^{s,t}_p(\mathcal{F}) &=& \sum_{s' < s} (\mathfrak{i}^{s,t}_p)^{-1}(\mathcal{H}^{s',t}_p(\mathcal{F})), \\
\end{eqnarray*}
and for $s < t$,
\begin{eqnarray*}
\mathcal{N}^{s,t}_p(\mathcal{F}) &=& \sum_{s' < s\leq t' < t} (\mathfrak{i}^{s,t'}_p)^{-1}(\mathcal{H}^{s',t'}_p(\mathcal{F})), \\
\mathcal{P}^{s,t}_p(\mathcal{F}) &=& \mathcal{M}^{s,t}_p(\mathcal{F}) \cap \mathcal{N}^{s,t}_p(\mathcal{F})^\perp, \\
\mathcal{P}^{s,\infty}_p(\mathcal{F}) &=&  \mathcal{H}_p(K_s)  \cap   \bigcap_{s \leq t} \mathcal{M}^{s,t}_p(\mathcal{F})^\perp.
\end{eqnarray*}
\end{definition}

The vector spaces defined in Definition~\ref{def:harmonic-barcode} are isomorphic to
the corresponding ones in Definitions~\ref{def:barcode} and \ref{def:barcode1}. 
Following the same notation as in Definition~\ref{def:harmonic-barcode} we have the 
following proposition.

\begin{proposition}
\label{prop:harmonic-barcode}
\begin{eqnarray}
\label{eqn:prop:harmonic-barcode:1}
\mathcal{M}^{s,t}_p(\mathcal{F}) &\cong & {M}^{s,t}_p(\mathcal{F}), \\ 
\label{eqn:prop:harmonic-barcode:2}
\mathcal{N}^{s,t}_p(\mathcal{F}) &\cong& {N}^{s,t}_p(\mathcal{F}), \\
\label{eqn:prop:harmonic-barcode:3}
\mathcal{P}^{s,t}_p(\mathcal{F}) &\cong& {P}^{s,t}_p(\mathcal{F}), \\
\label{eqn:prop:harmonic-barcode:4}
\mathcal{P}^{s,\infty}_p(\mathcal{F}) &\cong&  {P}^{s,\infty}_p(\mathcal{F}).
\end{eqnarray}
\end{proposition}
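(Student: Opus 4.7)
The plan is to leverage the commutative square \eqref{eqn:commutative} together with the isomorphism $\mathfrak{f}_p(K_s) : \HH_p(K_s) \to \mathcal{H}_p(K_s)$ from Proposition~\ref{prop:f} in order to transport the subspaces of $\HH_p(K_s)$ defined via the maps $i^{s,t}_p$ to the corresponding subspaces of $\mathcal{H}_p(K_s)$ defined via the maps $\mathfrak{i}^{s,t}_p$, and then to observe that the orthogonal-complement operations appearing on the harmonic side play the role of the quotient operations appearing on the homological side.

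First I would prove \eqref{eqn:prop:harmonic-barcode:1} and \eqref{eqn:prop:harmonic-barcode:2}. Since $\mathfrak{f}_p$ is an isomorphism, the commutativity of \eqref{eqn:commutative} yields
\[
\mathcal{H}^{s',t}_p(\mathcal{F}) \;=\; \mathrm{Im}(\mathfrak{i}^{s',t}_p) \;=\; \mathfrak{f}_p(K_t)(\HH^{s',t}_p(\mathcal{F})),
\]
and hence for any $z \in \HH_p(K_s)$,
\[
i^{s,t}_p(z) \in \HH^{s',t}_p(\mathcal{F}) \;\Longleftrightarrow\; \mathfrak{i}^{s,t}_p(\mathfrak{f}_p(K_s)(z)) \in \mathcal{H}^{s',t}_p(\mathcal{F}).
\]
So $\mathfrak{f}_p(K_s)$ carries each preimage $(i^{s,t}_p)^{-1}(\HH^{s',t}_p(\mathcal{F}))$ bijectively onto $(\mathfrak{i}^{s,t}_p)^{-1}(\mathcal{H}^{s',t}_p(\mathcal{F}))$, and taking the appropriate unions (over $s' < s$, respectively over pairs with $s' < s \leq t' < t$) produces the claimed isomorphisms $M^{s,t}_p(\mathcal{F}) \xrightarrow{\sim} \mathcal{M}^{s,t}_p(\mathcal{F})$ and $N^{s,t}_p(\mathcal{F}) \xrightarrow{\sim} \mathcal{N}^{s,t}_p(\mathcal{F})$.

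For \eqref{eqn:prop:harmonic-barcode:3}, I would first verify that $N^{s,t}_p(\mathcal{F}) \subset M^{s,t}_p(\mathcal{F})$: if $z \in (i^{s,t'}_p)^{-1}(\HH^{s',t'}_p(\mathcal{F}))$ with $s' < s \leq t' < t$, writing $i^{s,t'}_p(z) = i^{s',t'}_p(w)$ and post-composing with $i^{t',t}_p$ gives $i^{s,t}_p(z) \in \HH^{s',t}_p(\mathcal{F})$. The isomorphisms established above transfer the inclusion to $\mathcal{N}^{s,t}_p(\mathcal{F}) \subset \mathcal{M}^{s,t}_p(\mathcal{F})$. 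Therefore
\[
\mathcal{M}^{s,t}_p(\mathcal{F}) \;=\; \mathcal{N}^{s,t}_p(\mathcal{F}) \;\oplus\; \bigl(\mathcal{M}^{s,t}_p(\mathcal{F}) \cap \mathcal{N}^{s,t}_p(\mathcal{F})^\perp\bigr),
\]
so $\mathcal{P}^{s,t}_p(\mathcal{F}) \cong \mathcal{M}^{s,t}_p(\mathcal{F}) / \mathcal{N}^{s,t}_p(\mathcal{F})$; combining with \eqref{eqn:prop:harmonic-barcode:1} and \eqref{eqn:prop:harmonic-barcode:2} gives the required isomorphism with $P^{s,t}_p(\mathcal{F})$. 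An entirely parallel argument handles \eqref{eqn:prop:harmonic-barcode:4}: the identity $\bigcap_{s\leq t}\mathcal{M}^{s,t}_p(\mathcal{F})^\perp = \bigl(\bigcup_{s\leq t}\mathcal{M}^{s,t}_p(\mathcal{F})\bigr)^\perp$, together with the monotonicity of $M^{s,t}_p(\mathcal{F})$ in $t$ transferred to the harmonic side, lets the orthogonal-complement-as-quotient principle applied inside $\mathcal{H}_p(K_s)$ identify $\mathcal{P}^{s,\infty}_p(\mathcal{F})$ with $\mathcal{H}_p(K_s)/\mathfrak{f}_p(K_s)(M^{s,\infty}_p(\mathcal{F}))$, which is isomorphic via $\mathfrak{f}_p(K_s)$ to $\HH_p(K_s)/M^{s,\infty}_p(\mathcal{F}) = P^{s,\infty}_p(\mathcal{F})$.

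The main obstacle is not conceptual but notational: one must carefully verify that preimage and union operations commute with $\mathfrak{f}_p(K_s)$, and that the orthogonal complements, although taken a priori inside $C_p(K)$, restrict correctly to the ambient subspace $\mathcal{H}_p(K_s)$ in which all the $\mathcal{M}^{s,t}_p(\mathcal{F})$ live. Once that bookkeeping is in place, the proof reduces to two elementary observations: isomorphisms preserve set-theoretic preimages, and for any subspace $U$ of an Euclidean space $V$, the intersection $V \cap U^\perp$ realizes the quotient $V/U$.
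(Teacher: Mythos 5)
Your proof is correct and follows essentially the same approach as the paper's: use the fact that the vertical arrows in the commutative diagram \eqref{eqn:commutative} are isomorphisms (Proposition~\ref{prop:f}) to transport the preimage/union definitions from the homology side to the harmonic side, then replace quotients by orthogonal complements taken inside the ambient harmonic subspace. You are simply more explicit than the paper about the bookkeeping — the chain of equivalences establishing that $\mathfrak{f}_p(K_s)$ carries $(i^{s,t}_p)^{-1}(\HH^{s',t}_p(\mathcal{F}))$ onto $(\mathfrak{i}^{s,t}_p)^{-1}(\mathcal{H}^{s',t}_p(\mathcal{F}))$, the verification that $N^{s,t}_p(\mathcal{F}) \subset M^{s,t}_p(\mathcal{F})$, and the monotonicity in $t$ that makes $\bigcup_{s\leq t}\mathcal{M}^{s,t}_p(\mathcal{F})$ a subspace — which the paper states without proof.
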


\begin{proof}
The isomorphisms in \eqref{eqn:prop:harmonic-barcode:1} and \eqref{eqn:prop:harmonic-barcode:2} follow from the fact that the vertical arrows in the commutative
diagram \eqref{eqn:commutative} are isomorphisms (Proposition~\ref{prop:f}).

In order to prove the isomorphism in \eqref{eqn:prop:harmonic-barcode:3},  first observe that  
for $s < t$,
$\mathcal{N}^{s,t}_p(\mathcal{F})$ is a subspace of 
$\mathcal{M}^{s,t}_p(\mathcal{F})$.
The isomorphism now follows from the following simple observation.
If $W_1 \subset W_2$ are subspaces of an Euclidean  space $V$, then
$W_1^\perp \cap W_2$ equals the orthogonal complement of $W_1$ in $W_2$ 
(restricting the inner product of $V$ to $W_2$).

Finally, the isomorphism in \eqref{eqn:prop:harmonic-barcode:4} follows from the fact that
$
\sum_{s \leq t} \mathcal{M}^{s,t}_p(\mathcal{F})
$ is a subspace of 
$\mathcal{H}_p(K_s)$,
and 
\[
\left(\sum_{s \leq t} \mathcal{M}^{s,t}_p(\mathcal{F})\right)^\perp = \bigcap_{s \leq t} \mathcal{M}^{s,t}_p(\mathcal{F})^\perp,
\]
and the previous observation.
\end{proof}

In analogy with Definition~\ref{def:barcode2} we now define harmonic barcodes of filtrations. 
\begin{definition}[Harmonic barcodes]
\label{def:harmonic-barcode2}
We will call the set
\[
\mathbf{\mathcal{B}}_p(\mathcal{F}) = \{(s,t,\mathcal{P}^{s,t}_p(\mathcal{F})) \mid  \mathcal{P}^{s,t}_p(\mathcal{F})) \neq  0\}
\]
\emph{the $p$-dimensional harmonic barcode associated to the filtration $\mathcal{F}$}.
For ${b} = (s,t,\mu^{s,t}_p(\mathcal{F})) \in {B}_p(\mathcal{F})$, we will call the subspace
$\mathcal{P}^{s,t}_p(\mathcal{F}) \subset \mathcal{H}_p(K_s)$, the harmonic homology subspace associated to
$b$.
\end{definition}

\begin{remark}
Note that  
\[
\mathbf{B}_p(\mathcal{F})  \subset  T  \times \left(T \cup \{\infty\}\right)  \times  \Z_{> 0},
\]
and
\[
\mathbf{\mathcal{B}}_p(\mathcal{F})  \subset  T  \times \left(T \cup \{\infty\}\right)  \times \coprod_{0 \leq d \leq \dim C_p(K)}  \Gr(d,  C_p(K)).
\]
\end{remark}

\subsection{Harmonic persistent homology spaces of filtrations induced by functions}
\label{subsec:stability}
We now consider filtrations of finite simplicial complexes induced by functions. Our goal
is to prove that the harmonic persistent homology spaces of such filtrations are stable with respect 
to perturbations of of the functions defining them.

\subsubsection{Filtration induced by functions}

\begin{definition}[Admissible functions]
\label{def:simplex-wise}
Let $K$ be a finite simplicial complex and $f:K \rightarrow \R$. We say that $f$  
is an \emph{admissible function} if $f$ satisfies for each $\sigma,\tau \in K$,
with $\sigma \prec \tau$, $f(\sigma) < f(\tau)$.
\end{definition}

\begin{notation}[Filtration induced by an admissible function]
\label{not:induced-filtration}
If $f:K \rightarrow \R$ is an admissible function, then for each $t \in \R$,
$K_{f \leq t} = f^{-1}(-\infty,t]$ is a sub-complex of $K$,
and $(K_{f \leq t})_{t \in \R}$ is a filtration with respect to the usual order of $\R$.
We will call this filtration \emph{the filtration induced by $f$} 
and denote it by $\mathcal{F}_f$.
\end{notation}

\begin{definition}[Harmonic filtration  function]
\label{def:persfunc}
Let $K$ be a finite simplicial complex, and
let $f: K\rightarrow \R$ be 
an admissible function.

For each $p \geq 0$ we define:
\begin{align*}
\mathcal{H}_p(K,f): \R&\rightarrow \Gr(C_p(K)) = \coprod_{0 \leq d \leq \dim C_p(K)}  \Gr(d, C_p(K)) \\
t& \mapsto \h_p(K_{f \leq t}).
\end{align*}
We will denote $\mathcal{H}_p(K,f)(t)$ by $\mathcal{H}_p^t(K,f)$. 
We call $\mathcal{H}_p(K,f)$ a 
\emph{harmonic filtration function of dimension $p$}.
\end{definition}

\begin{remark}
Notice that the harmonic filtration function $\mathcal{H}_p(K,f)$ contains all the information of persistent homology. 
Once $\mathcal{H}_p(K,f)$ is known, for $s\leq t$ we have 
\[
\proj_{\mathcal{H}_p(K,f)(t)}(\mathcal{H}_p(K,f)(s))=\h_p^{s,t}(\mathcal{F}_f)\cong \HH_p^{s,t}(\mathcal{F}_f).
\]
This was already observed by Lieutier (see \cite[Lemma 2]{talk:Lieutier}).
\end{remark}

\subsubsection{Metric on the space of harmonic filtration functions}
Observe that for any finite dimensional real vector space $V$, and for $d \geq 0$, the Grassmannian
$\Gr(d,V)$ has the structure of a semi-algebraic set -- and hence, $\Gr(V) = \coprod_{0 \leq d \leq \dim V}\Gr(d,V)$
is also a semi-algebraic set. For a finite simplicial complex $K$, and $p \geq 0$, we denote by
$\mathcal{S}_p(K)$ the set of
piece-wise constant maps with at most finitely many discontinuities
$F:\R \rightarrow \Gr(C_p(K))$.

We now introduce a class of (pseudo)-metrics on $\mathcal{S}_p(K)$.
\begin{definition}
\label{def:metric}
Let $\ell >0$. For $F,G \in \mathcal{S}_p(K)$ we define
\begin{equation}
\label{eqn:def:metric}
 \dist_{K,p,\ell}(F,G) = \left(\int_\R d_{K,p}(F(t),G(t))^\ell\; dt\right)^{1/\ell}.   
\end{equation}
\end{definition}

We also define a family of semi-norms on the space of functions $f:K \rightarrow \R$.
\begin{definition}
\label{def:norm-on-functions}
Let $\ell > 0$.
Given a function $f:K \rightarrow \R$ we define 
\begin{equation}
\label{eqn:def:norm-on-functions}
||f||_\ell^{(p)}=  \left(\sum_{\sigma \in K^{\sqbracket{p}}} |f(\sigma)|^\ell\right)^{1/\ell}.   
\end{equation}
\end{definition}

We are now in a position to state our first stability result.
\begin{theorem}
\label{thm:stable}
Let $K$ be a finite simplicial complex, $f,g:K \rightarrow \R$ be admissible functions 
and $p \geq 0$.
Then,
\[
\dist_{K,p,2}(\mathcal{H}_p(K,f(t)), \mathcal{H}_p(K,g(t))) \leq    
\frac{\pi}{2} \cdot \left( ||f-g||_1^{(p)} + ||f-g||_1^{(p+1)}\right)^{1/2}.
\]
\end{theorem}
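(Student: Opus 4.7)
The plan is to reduce the $L^2$ Grassmannian distance between the two harmonic filtration functions to a pointwise bound at each level $t$ provided by Corollary~\ref{cor:stability-homology}, and then to compute the resulting one-dimensional integrals one simplex at a time.

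First I would unfold the definitions: by Definition~\ref{def:metric} combined with Definition~\ref{def:persfunc},
\[
\dist_{K,p,2}(\mathcal{H}_p(K,f),\mathcal{H}_p(K,g))^2 = \int_\R d_{K,p}\bigl(\mathcal{H}_p(K_{f\leq t}),\mathcal{H}_p(K_{g\leq t})\bigr)^2\,dt.
\]
Admissibility of $f$ and $g$ guarantees that $K_{f\leq t}$ and $K_{g\leq t}$ are genuine sub-complexes of $K$ for every $t$, and since $K$ is finite there are only finitely many distinct sub-complexes appearing as $t$ varies over $\R$. Hence the integrand is piecewise constant with finitely many discontinuities and compact support, so the measurability and integrability required to talk about the integral at all are automatic, and in fact the map $t \mapsto \mathcal{H}_p(K_{f\leq t})$ lies in $\mathcal{S}_p(K)$ as required by Definition~\ref{def:metric}.

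Next I would apply Corollary~\ref{cor:stability-homology} pointwise in $t$ with $K_1 = K_{f\leq t}$ and $K_2 = K_{g\leq t}$, and then interchange the finite sum and the integral by Tonelli's theorem to obtain
\[
\dist_{K,p,2}^2 \;\leq\; \frac{\pi^2}{4}\sum_{\sigma \in K^{\sqbracket{p}} \cup K^{\sqbracket{p+1}}} \int_\R \bigl|\chi_{K_{f\leq t}}(\sigma) - \chi_{K_{g\leq t}}(\sigma)\bigr|\,dt.
\]
The core calculation is the per-simplex integral: for a fixed $\sigma$, $\chi_{K_{f\leq t}}(\sigma) = 1$ if and only if $f(\sigma) \leq t$, and similarly for $g$, so the integrand is (almost everywhere) the indicator of the interval with endpoints $f(\sigma)$ and $g(\sigma)$, which integrates to exactly $|f(\sigma) - g(\sigma)|$.

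Summing these per-simplex contributions and invoking Definition~\ref{def:norm-on-functions} yields $\dist_{K,p,2}^2 \leq \tfrac{\pi^2}{4}(\|f-g\|_1^{(p)} + \|f-g\|_1^{(p+1)})$, and the theorem follows by taking square roots. I do not expect a serious obstacle: all the genuinely topological content is already packaged in Corollary~\ref{cor:stability-homology}, and what remains is a Fubini swap together with the elementary ``layer cake'' identity $\int_\R |\mathbf{1}_{f(\sigma)\leq t} - \mathbf{1}_{g(\sigma)\leq t}|\,dt = |f(\sigma) - g(\sigma)|$ applied once for each $\sigma \in K^{\sqbracket{p}} \cup K^{\sqbracket{p+1}}$. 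The only point worth flagging is that Corollary~\ref{cor:stability-homology} was stated for two sub-complexes of a single ambient complex $K$, which is exactly the situation at hand since $K_{f\leq t}, K_{g\leq t} \subset K$ for every $t$.
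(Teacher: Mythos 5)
Your proposal is correct and follows essentially the same route as the paper's own proof: apply Corollary~\ref{cor:stability-homology} pointwise in $t$, interchange the (finite) sum over simplices with the integral, and evaluate each per-simplex integral via the layer-cake identity $\int_\R |\chi_{K_{f\leq t}}(\sigma)-\chi_{K_{g\leq t}}(\sigma)|\,dt = |f(\sigma)-g(\sigma)|$. The only difference is that you spell out the measurability and well-posedness details (that $t\mapsto \mathcal{H}_p(K_{f\leq t})$ is piecewise constant with compact support and hence lies in $\mathcal{S}_p(K)$), which the paper leaves implicit.
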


\begin{proof}
Using Corollary~\ref{cor:stability-homology} we have 
\begin{eqnarray*}
\dist_{K,p,2}(\mathcal{H}_p(K,f(t)), \mathcal{H}_p(K,g(t)))^2 &=& \int_\R d_{K,p}(\mathcal{H}_p^t(K,f),\mathcal{H}_p^t(K,g))^2 dt \\
&\leq&
\frac{\pi^2}{4}\cdot\int_\R \left( \sum_{\sigma \in K^{\sqbracket{p}} \cup K^{\sqbracket{p+1}}} |\chi_{K_{f \leq t}}(\sigma) - \chi_{K_{g \leq t}}(\sigma)|        dt \right)\\
&=& 
\frac{\pi^2}{4}\cdot\left( \sum_{\sigma \in K^{\sqbracket{p}} \cup K^{\sqbracket{p+1}}} \int_\R  |\chi_{K_{f \leq t}}(\sigma) - \chi_{K_{g \leq t}}(\sigma)|        dt \right)\\
&=&
\frac{\pi^2}{4}\cdot\left( \sum_{\sigma \in K^{\sqbracket{p}}} |f(\sigma) - g(\sigma)|   +     
\sum_{\sigma \in K^{\sqbracket{p}+1}} |f(\sigma) - g(\sigma)|  \right)\\
&=&
\frac{\pi^2}{4}\cdot\left( ||f - g||_1^{(p)} + ||f - g ||_1^{(p+1)}  \right).
\end{eqnarray*}
The theorem follows.
\end{proof}

We prove now a similar stability theorem for the harmonic persistent homology spaces
\[
\mathcal{H}_p^{s,t}(K,f) := \mathcal{H}_p^{s,t}(\mathcal{F}_f), 
\mathcal{H}_p^{s,t}(K,g) := \mathcal{H}_p^{s,t}(\mathcal{F}_g).
\]

For convenience  (and in order to make sure that the distance does not blow up
for trivial reasons) we will only consider admissible 
functions $f: K \rightarrow \R$ taking values in $[0,1]$. This is not a serious restriction, since $K$ is a finite complex and one can always scale and translate functions without affecting the induced finite filtration of $K$ other than the index. 

We define:
\begin{definition}
\label{def:persistent-harmonicdist}
Let $\ell > 0$.
For $F,G \in \mathcal{S}_p(K)$ we define
\begin{equation}
\label{eqn:persistent-harmonicdist}
 \dist_{\h,p,\ell}^{\persistent}(F,G) = \left(\int_{0}^{1} \int_{0}^{t}  
 d_{K,p}(F^{s,t}, G^{s,t})^\ell ds dt \right)^{1/\ell},
\end{equation}
where 
\[
F^{s,t} = \proj_{F(t)}F(s), G^{s,t} = \proj_{G(t)}G(s).
\]
\end{definition}

\begin{theorem}
\label{thm:stable-persistent}
Let $K,f,g$ be as above and $p \geq 0$.
Then,
\[
\dist_{\h,p,1}^{\persistent}(\mathcal{H}_p(K,f), \mathcal{H}_p(K,g)) \leq    \pi \cdot \left( ||f-g||_1^{(p)} + ||f-g||_1^{(p+1)}\right).
\]
\end{theorem}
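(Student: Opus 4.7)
The overall strategy mirrors the proof of Theorem~\ref{thm:stable}, but with two key adaptations: (i) because $\dist_{\h,p,1}^{\persistent}$ is $\ell^1$-type, we need a pointwise bound that is \emph{linear} (not a square root) in an integer count of symmetric differences, exploiting that $\sqrt{n}\leq n$ for $n\in\mathbb{Z}_{\geq 0}$; and (ii) we must encode the persistent condition $z\in Z_p(K_{f\leq s})+B_p(K_{f\leq t})$, which involves an existentially quantified $(p+1)$-chain. The latter is the main technical obstacle and is handled by lifting to an auxiliary product space.

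Write $F^{s,t}:=\mathcal{H}_p^{s,t}(\mathcal{F}_f)$ and $G^{s,t}:=\mathcal{H}_p^{s,t}(\mathcal{F}_g)$; these coincide with the subspaces $\proj_{F(t)}F(s)$ and $\proj_{G(t)}G(s)$ in Definition~\ref{def:persistent-harmonicdist}, since the image of $\mathcal{H}_p(K_{f\leq s})$ under $\mathfrak{i}_p^{s,t}$ already lies in $\mathcal{H}_p(K_{f\leq t})$. Set
\[
V_f:=\{(z,c)\in C_p(K)\times C_{p+1}(K):\,z\in\mathcal{H}_p(K_{f\leq t}),\ c\in C_{p+1}(K_{f\leq t}),\ z-\partial_{p+1}c\in C_p(K_{f\leq s})\},
\]
and $V_g$ analogously; a routine verification shows $F^{s,t}=\pi_1(V_f)$ and $G^{s,t}=\pi_1(V_g)$, where $\pi_1$ denotes projection to the first factor.

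Using the orthonormal simplex basis, both $V_f$ and $V_g$ are cut out by finite linear systems expressing support, cycle, harmonicity, and the persistence equation (the last via the auxiliary $c$). As in the proof of Lemma~\ref{lem:stability-homology}, the codimension of $V_f\cap V_g$ inside $V_f$ is bounded by the number of extra equations needed to impose the $g$-analogue of each condition; counting them gives
\[
\dim V_f-\dim(V_f\cap V_g)\leq\delta_A(s,t):=\card(K_{f\leq t}^{\sqbracket{p}}\setminus K_{g\leq t}^{\sqbracket{p}})+\card(K_{f\leq t}^{\sqbracket{p+1}}\triangle K_{g\leq t}^{\sqbracket{p+1}})+\card(K_{f\leq s}^{\sqbracket{p}}\setminus K_{g\leq s}^{\sqbracket{p}}),
\]
and symmetrically $\dim V_g-\dim(V_f\cap V_g)\leq\delta_B(s,t)$ with $f$ and $g$ interchanged. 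Since $\pi_1(V_f\cap V_g)\subset F^{s,t}\cap G^{s,t}$ and $\ker(\pi_1|_{V_f\cap V_g})\subset\ker(\pi_1|_{V_f})$, rank-nullity descends these bounds to $\dim F^{s,t}-\dim(F^{s,t}\cap G^{s,t})\leq\delta_A(s,t)$, and symmetrically for $G^{s,t}$.

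Consequently, by Lemma~\ref{lem:subspace} and $\sqrt{n}\leq n$ on non-negative integers,
\[
d_{K,p}(F^{s,t},G^{s,t})\leq\tfrac{\pi}{2}\bigl(\max(\dim F^{s,t},\dim G^{s,t})-\dim(F^{s,t}\cap G^{s,t})\bigr)\leq\tfrac{\pi}{2}\delta(s,t),
\]
where $\delta:=\delta_A+\delta_B=\card(K_{f\leq s}^{\sqbracket{p}}\triangle K_{g\leq s}^{\sqbracket{p}})+\card(K_{f\leq t}^{\sqbracket{p}}\triangle K_{g\leq t}^{\sqbracket{p}})+2\,\card(K_{f\leq t}^{\sqbracket{p+1}}\triangle K_{g\leq t}^{\sqbracket{p+1}})$. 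Applying Fubini on $\{0\leq s\leq t\leq 1\}$, using $\int_s^1 dt\leq 1$ and $t\leq 1$, together with the identity $\int_0^1|\chi_{K_{f\leq u}}(\sigma)-\chi_{K_{g\leq u}}(\sigma)|\,du=|f(\sigma)-g(\sigma)|$ from the proof of Theorem~\ref{thm:stable}, one obtains $\int_0^1\int_0^t\delta(s,t)\,ds\,dt\leq 2(\|f-g\|_1^{(p)}+\|f-g\|_1^{(p+1)})$, and multiplying by $\pi/2$ yields the claim.
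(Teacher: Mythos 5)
Your proof is correct, and it takes a genuinely different route from the paper's. The paper bounds $d_{K,p}(F^{s,t},G^{s,t})$ via the triangle inequality through the intermediate subspaces $Z_f=\proj_{V_f}(U_f\cap U_g)$, $Z_g=\proj_{V_g}(U_f\cap U_g)$, and then needs an auxiliary fact (Lemma~\ref{lem:stable-persistent}) bounding $\dim Z_i - \dim(Z_1\cap Z_2)$ in terms of the codimensions of $V_1\cap V_2$ in $V_1,V_2$. You instead lift $\mathcal{H}_p^{s,t}$ to a linear subspace $V_f$ of $C_p(K)\times C_{p+1}(K)$, with the existentially quantified boundary witness $c$ carried along as an explicit coordinate, and then you can directly re-run the codimension-counting argument of Lemma~\ref{lem:stability-homology} on these lifted systems. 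The rank--nullity step (using that $\pi_1(V_f\cap V_g)\subset F^{s,t}\cap G^{s,t}$ and $\ker(\pi_1|_{V_f\cap V_g})\subset\ker(\pi_1|_{V_f})$) correctly transfers the codimension bound downstairs. Your pointwise bound, $\tfrac{\pi}{2}\bigl(D^{(p)}(s)+D^{(p)}(t)+2D^{(p+1)}(t)\bigr)$ where $D^{(q)}(u)=\card(K_{f\leq u}^{\sqbracket{q}}\triangle K_{g\leq u}^{\sqbracket{q}})$, differs from the paper's $\tfrac{\pi}{2}\bigl(D^{(p)}(s)+D^{(p+1)}(s)+D^{(p)}(t)+D^{(p+1)}(t)\bigr)$ (yours doubles the $(p+1)$-term at time $t$ but has none at time $s$; neither dominates the other), but both integrate to the same final $\pi\cdot(\|f-g\|_1^{(p)}+\|f-g\|_1^{(p+1)})$. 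The one verification you label ``routine''—that $F^{s,t}=\pi_1(V_f)$, i.e., that one may use the weaker condition $z-\partial_{p+1}c\in C_p(K_{f\leq s})$ in place of $z-\partial_{p+1}c\in\mathcal{H}_p(K_{f\leq s})$—does go through (project $z-\partial_{p+1}c\in Z_p(K_{f\leq s})$ onto $B_p(K_{f\leq s})^{\perp}$ and absorb the discrepancy, which lies in $B_p(K_{f\leq s})\subset B_p(K_{f\leq t})$, into $c$), but it would be worth spelling out since it is the step that makes the system purely linear in $(z,c)$. Overall your approach is more self-contained than the paper's, bypassing Lemma~\ref{lem:stable-persistent} entirely.
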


Before proving Theorem~\ref{thm:stable-persistent} we need a lemma.

\begin{lemma}
\label{lem:stable-persistent}
Let $V$ be a finite dimensional inner product space and $P,V_1,V_2$, subspaces of $V$.
Let for $i =1,2$, $Z_i = \proj_{V_i}(P)$ and $\Delta_i = \dim(V_i) - \dim (V_1 \cap V_2)$.
Then for $i=1,2$,
\[
\dim Z_i - \dim(Z_1 \cap Z_2) \leq \Delta_1 + \Delta_2.
\]
\end{lemma}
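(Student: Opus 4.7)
The plan is to reduce the problem to a Grassmann dimension calculation inside a cleverly chosen subspace of $W := V_1 \cap V_2$. First set $Y_i := Z_i \cap W$ for $i=1,2$. Since $Z_i \subset V_i$, every element of $Z_1 \cap Z_2$ already lies in $V_1 \cap V_2 = W$, so $Z_1 \cap Z_2 = Y_1 \cap Y_2$. Next introduce the auxiliary subspace $Y := \proj_W(P)$ and verify that $Y$ equals both $\proj_W(Z_1)$ and $\proj_W(Z_2)$; this rests on the identity $\proj_W = \proj_W \circ \proj_{V_i}$, which holds whenever $W \subset V_i$ because then $V_i^\perp \subset W^\perp$, so $v - \proj_{V_i}(v) \in V_i^\perp \subset W^\perp$ is annihilated by $\proj_W$. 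With $Y$ in hand, both $Y_1$ and $Y_2$ sit inside $Y$: any $y \in Y_i \subset W$ satisfies $y = \proj_W(y) \in \proj_W(Z_i) = Y$.

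Applying the Grassmann formula inside $Y$ gives $\dim(Y_1 \cap Y_2) \geq \dim Y_1 + \dim Y_2 - \dim Y$, which rearranges to
\[
\dim Z_1 - \dim(Z_1 \cap Z_2) \;\leq\; (\dim Z_1 - \dim Y_1) + (\dim Y - \dim Y_2).
\]
The first summand is at most $\Delta_1$: the composition $Z_1 \hookrightarrow V_1 \twoheadrightarrow V_1/W$ has kernel exactly $Y_1 = Z_1 \cap W$, so $\dim Z_1 - \dim Y_1 \leq \dim(V_1/W) = \Delta_1$. For the second summand, the surjection $\proj_W\colon Z_2 \twoheadrightarrow Y$ gives $\dim Y \leq \dim Z_2$, and the analogous argument $Z_2 \hookrightarrow V_2 \twoheadrightarrow V_2/W$ gives $\dim Z_2 - \dim Y_2 \leq \Delta_2$; combining yields $\dim Y - \dim Y_2 \leq \dim Z_2 - \dim Y_2 \leq \Delta_2$. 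Adding the two bounds completes the case $i=1$, and the case $i=2$ follows by interchanging the roles of $V_1$ and $V_2$.

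The step I expect to be the main obstacle, and the real content of the argument, is selecting the right ambient subspace for the Grassmann inequality. Using $W$ itself in place of $Y$ would produce only the weaker estimate $\Delta_1 + \Delta_2 + \dim W - \dim Z_2$, which is insufficient when $\dim Z_2 < \dim W$. Choosing $Y = \proj_W(P)$ exactly compensates, since the surjection $Z_2 \twoheadrightarrow Y$ forces $\dim Y \leq \dim Z_2$ and thereby cancels the offending term.
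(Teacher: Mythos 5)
Your argument is correct. Every step checks out: $Z_1\cap Z_2=Y_1\cap Y_2$ because $Z_1\cap Z_2\subset V_1\cap V_2=W$; the identity $\proj_W=\proj_W\circ\proj_{V_i}$ holds since $W\subset V_i$ forces $V_i^\perp\subset W^\perp$, so $Y=\proj_W(P)=\proj_W(Z_1)=\proj_W(Z_2)$ and $Y_1,Y_2\subset Y$; and the two rank estimates via $Z_i\hookrightarrow V_i\twoheadrightarrow V_i/W$ are exactly $\Delta_i$.

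The route is genuinely different from the paper's, though it uses the same raw ingredients. The paper works on the other side of the Grassmann identity: it writes $\dim(Z_1\cap Z_2)=\dim Z_1+\dim Z_2-\dim(Z_1+Z_2)$, decomposes each $\pi_i=\proj_{V_i}|_P$ orthogonally as $\pi\oplus\pi_i'$ relative to $V_i=W\oplus W_i$ (with $W_i$ the orthogonal complement of $W$ in $V_i$), and bounds $\dim(Z_1+Z_2)\leq\rank(\pi)+\Delta_1+\Delta_2$ before cancelling $\rank(\pi)$ against $\rank(\pi_i)$. You instead push the whole problem into $W$, identify $Z_1\cap Z_2$ with $Y_1\cap Y_2$, and apply the Grassmann formula inside the auxiliary space $Y=\proj_W(P)$ (which equals $\image(\pi)$ in the paper's notation), converting the result to a bound on $\dim Z_i-\dim(Z_1\cap Z_2)$ via two separate kernel counts. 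The paper's orthogonal splitting $\pi_i=\pi\oplus\pi_i'$ is more structural and makes the asymmetry-free bound on $\dim(Z_1+Z_2)$ transparent; your version trades that for a cleaner ``everything lives in $W$'' picture and makes explicit the surjection $Z_2\twoheadrightarrow Y$, which is the observation that keeps the bound tight when $\dim Z_2<\dim W$. Your closing remark about why $Y$ rather than $W$ must serve as the ambient space is accurate and captures the same subtlety that, in the paper's proof, is absorbed into the cancellation $r\leq r_i$.
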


\begin{proof}
Let $V' = V_1+V_2$,
and let 
\[
\pi_1 = \proj_{V_1}|_P,
\pi_2 = \proj_{V_2}|_P,
\pi = \proj_{V_1 \cap V_2}|_P:P  \rightarrow V',
\]
denote the orthogonal projections from $P$ on to $V_1,V_2$, and $V_1 \cap V_2$ respectively.
Denote for $i=1,2$, $r_i = \rank(\pi_i)$, and  $r = \rank(\pi)$.

Using the notation introduced above we have:
\begin{eqnarray}
\nonumber
\dim(Z_1\cap Z_2) &=& \dim(Z_1) + \dim(Z_2) - \dim(Z_1 + Z_2) \\
\label{eqn:lem:stable-persistent:1}
&=& r_1 + r_2 - \dim(Z_1 + Z_2).
\end{eqnarray}

Now let for $i=1,2$, $W_i$ be the orthogonal complement of $V_1 \cap V_2$ in $V_i$.
Thus, for $i=1,2$ we have an orthogonal decomposition
$V_i = (V_1 \cap V_2) \oplus W_i$,
so that $\Delta_i = \dim(W_i)$,
and a decomposition
$\pi_i= \pi \oplus \pi_i'$, where $\pi_i' = \proj_{W_i}|_P$.

Thus,
\begin{eqnarray*}
Z_1 + Z_2 &=& \image(\pi_1) + \image(\pi_2) \\
&=& \image(\pi) + \image(\pi_1') + \image(\pi_2').
\end{eqnarray*}

Hence,
\begin{eqnarray}
\nonumber
\dim(Z_1 + Z_2) &\leq& \rank(\pi) + \rank(\pi_1') + \rank(\pi_2') \\
\label{eqn:lem:stable-persistent:2}
&\leq& r + \Delta_1 + \Delta_2. 
\end{eqnarray}

Using inequalities \eqref{eqn:lem:stable-persistent:1}, \eqref{eqn:lem:stable-persistent:2}, and the 
fact that for $i,1,2$, $r \leq r_i$ (since $\pi$ factors through each of the $\pi_i$'s), we obtain
\begin{eqnarray*}
\dim(Z_1\cap Z_2) &\geq& r_1 + r_2 - r - \Delta_1 - \Delta_2 \\
&\geq& r_i - \Delta_1 - \Delta_2 \\
&=& \dim Z_i - \Delta_1 - \Delta_2,
\end{eqnarray*}
from which the lemma follows.
\end{proof}

\begin{proof}[Proof of Theorem~\ref{thm:stable-persistent}]
First observe that  the subspace $\mathcal{H}_p^{s,t}(K,f)$ (resp. $\mathcal{H}_p^{s,t}(K,g)$) is equal to the orthogonal projection
of $\mathcal{H}_p^{s}(K,f)$ into $\mathcal{H}_p^{t}(K,f)$ (respectively, $\mathcal{H}_p^{t}(K,g)$).

Denote 
\begin{equation}
\label{eqn:Fp-Gp}
F_p = \mathcal{H}_p(K,f), G_p = \mathcal{H}_p(K,g).
\end{equation}

Let 
\begin{align*}
U_f &= \mathcal{H}_p^{s}(K,f), & U_g &= \mathcal{H}_p^{s}(K,g),\\
V_f &= \mathcal{H}_p^{t}(K,f), & V_g &= \mathcal{H}_p^{t}(K,g),\\
W_f &= \proj_{V_f}(U_f), & W_g &= \proj_{V_g}(U_g), \\
Z_f &= \proj_{V_f}(U_f \cap U_g), &Z_g &= \proj_{V_g}(U_f \cap U_g).
\end{align*}

Using triangle inequality we get
\begin{eqnarray*}
d_{K,p}(F_p^{s,t}, G_p^{s,t}) &=&  d_{K,p}(W_f,W_g) \\
&\leq& d_{K,p}(W_f,Z_f) + d_{K,p}(Z_f,Z_g) + d_{K,p}(W_g,Z_g)
\end{eqnarray*}
(see Eqn.\  \eqref{eqn:Fp-Gp} and Definition~\ref{def:persistent-harmonicdist} for the definition of $F^{s,t}_p, G^{s,t}_p$). 
It is easy to verify from the definition of the metric $d_{K,p}$ that orthogonal projection does not increase
the distance between subspaces if one is a subspace of the other.
Hence,
\begin{eqnarray*}
d_{K,p}(W_f,Z_f) &\leq & d_{K,p}(U_f,U_f \cap U_g) \\
&=& \frac{\pi}{2} \cdot (\dim U_f - \dim (U_f \cap U_g))^{1/2}, \\
d_{K,p}(W_g,Z_g) &\leq & d_{K,p}(U_g,U_f \cap U_g) \\
&=& \frac{\pi}{2} \cdot (\dim U_g - \dim (U_f \cap U_g))^{1/2}.
\end{eqnarray*}

Moreover, using Lemma~\ref{lem:stable-persistent},
(with $P = U_f \cap U_g$)
we have that

\begin{eqnarray*}
d_{K,p}(Z_f,Z_g) &\leq & \frac{\pi}{2}\left(\max(\dim Z_f, \dim Z_g) - \dim (Z_f \cap Z_g)\right)^{1/2} \\
&\leq  & \frac{\pi}{2}\left((\dim V_f- \dim (V_f \cap V_g)) + (\dim V_g- \dim (V_f \cap V_g)) \right)^{1/2}.
\end{eqnarray*}

Now it follows from Lemma~\ref{lem:stability-homology} that,

\begin{eqnarray*}
\dim U_f - \dim (U_f \cap U_g) &=&
\dim \mathcal{H}_p^s(K,f) - \dim (\mathcal{H}_p^s(K,f) \cap \mathcal{H}_p^s(K,g))\\
&\leq&
\card\left(K_{f \leq s}^{\sqbracket{p}} \setminus  K_{g \leq s}^{\sqbracket{p}}\right) + \card\left(K_{g \leq s}^{\sqbracket{p+1}} \setminus K_{f \leq s}^{\sqbracket{p+1}}\right),\\
\dim U_g - \dim (U_f \cap U_g) &=& 
\dim \mathcal{H}_p^s(K,g) - \dim (\mathcal{H}_p^s(K,f) \cap \mathcal{H}_p^s(K,g)) \\
&\leq&
\card\left(K_{g \leq s}^{\sqbracket{p}} \setminus  K_{f \leq s}^{\sqbracket{p}}\right) + \card\left(K_{f \leq s}^{\sqbracket{p+1}} \setminus K_{g \leq s}^{\sqbracket{p+1}}\right), \\
\dim V_f - \dim (V_f \cap V_g)  &=& \dim \mathcal{H}_p^t(K,f)- \dim (\mathcal{H}_p^t(K,f) \cap \mathcal{H}_p^t(K,g)) \\
&\leq&
\card\left(K_{f \leq t}^{\sqbracket{p}} \setminus  K_{g \leq t}^{\sqbracket{p}}\right) + \card\left(K_{g \leq t}^{\sqbracket{p+1}} \setminus  K_{f \leq t}^{\sqbracket{p+1}}\right), \\
\dim V_g - \dim (V_f \cap V_g) &=&
\dim \mathcal{H}_p^t(K,g)- \dim (\mathcal{H}_p^t(K,f) \cap \mathcal{H}_p^t(K,g)) \\
&\leq&
\card\left(K_{g \leq t}^{\sqbracket{p}} \setminus  K_{f \leq t}^{\sqbracket{p}}\right) + \card\left(K_{f \leq t}^{\sqbracket{p+1}} \setminus K_{g \leq t}^{\sqbracket{p+1}}\right).
\end{eqnarray*}

It follows from the above inequalities that
\begin{eqnarray}
\nonumber
d_{K,p}(F_p^{s,t}, G_p^{s,t}) &\leq&
\frac{\pi}{2} \cdot \left(\Delta_{f,g}^{(p)}(s)^{1/2} + \Delta_{g,f}^{(p)}(s)^{1/2} + (\Delta_{f,g}^{(p)}(t) + \Delta_{g,f}^{(p)}(t))^{1/2}\right)\\
\label{eqn:thm:stable-persistent:0}
&\leq&
\frac{\pi}{2} \cdot \left(\Delta_{f,g}^{(p)}(s) + \Delta_{g,f}^{(p)}(s) + \Delta_{f,g}^{(p)}(t) + \Delta_{g,f}^{(p)}(t)\right),
 \end{eqnarray}
where 
for any two functions $h_1,h_2:K \rightarrow \R$ and $r \in \R$,
\[
\Delta_{h_1,h_2}^{(p)}(r) = 
\card\left(K_{h_1 \leq r}^{\sqbracket{p}} \setminus  K_{h_2 \leq r}^{\sqbracket{p}}\right) + \card\left(K_{h_2 \leq r}^{\sqbracket{p+1}} \setminus K_{h_1 \leq r}^{\sqbracket{p+1}}\right).
\]

Using Definition~\ref{def:persistent-harmonicdist} we have
\begin{eqnarray}
\label{eqn:thm:stable-persistent}
\dist_{\h,p,1}^{\persistent}(\mathcal{H}_p(K,f), \mathcal{H}_p(K,g)) 
&=&
\int_0^1 \int_0^t d_{K,p}(\mathcal{H}_p^{s,t}(K,f), \mathcal{H}_p^{s,t}(K,g)) ds dt.
\end{eqnarray}

Using equality \eqref{eqn:thm:stable-persistent} and inequality \eqref{eqn:thm:stable-persistent:0} we get
\begin{eqnarray*} 
\dist_{\h,p,1}^{\persistent}(\mathcal{H}_p(K,f), \mathcal{H}_p(K,g)) &\leq &
\frac{\pi}{2} \cdot \int_0^1 \int_0^t \left(\Delta_{f,g}^{(p)}(s) + 
 \Delta_{g,f}^{(p)}(s) + \Delta_{f,g}^{(p)}(t) + \Delta_{g,f}^{(p)}(t) \right) ds dt \\
&\leq &
\frac{\pi}{2} \cdot \left(
\int_0^1 \left(\Delta_{f,g}^{(p)}(t) + \Delta_{g,f}^{(p)}(t)\right) dt + \int_0^1 \left(\Delta_{f,g}^{(p)}(s) + \Delta_{g,f}^{(p)}(s)\right)  ds\right) \\
&=&
\pi \cdot \int_0^1 \left(\Delta_{f,g}^{(p)}(s) + \Delta_{g,f}^{(p)}(s)\right) ds \\
&=&
\pi \cdot \left( ||f-g||_1^{(p)} + ||f-g||_1^{(p+1)}\right).
\end{eqnarray*}
This completes the proof.
\end{proof}

\subsection{Stability of harmonic persistent barcodes}
We have proved the stability of the harmonic homology, as well as the persistent homology
subspaces of filtrations of finite simplicial complexes induced by admissible
functions (Theorems~\ref{thm:stable},
and \ref{eqn:thm:stable-persistent}). 
We now study the stability of the harmonic barcodes. 

Let $K$ be a finite simplicial complex and  
let $\mathcal{F}$ denote a finite filtration
$K_0 \subset \cdots \subset K_N= K$.
By convention we will assume that $K_s = \emptyset$ for $s <0$ and 
$K_t = K$ for $t \geq N$.

The harmonic barcode subspaces $\mathcal{P}^{s,t}_p(\mathcal{F})$  (see Definition~\ref{def:harmonic-barcode2}) 
are subspaces of $\mathcal{H}_p(K_s)$ (corresponding to the birth of 
the homology classes corresponding to the bar
${b} = (s,t,\mu^{s,t}_p(\mathcal{F}))$ (assuming
$\mu^{s,t}_p(\mathcal{F}) \neq 0$). 

\begin{definition}
\label{def:generic}
 We will say that a bar ${b} = (s,t,\mu^{s,t}_p(\mathcal{F})) \in \mathbf{B}_p(\mathcal{F})$ is \emph{generic} if it satisfies the following two conditions.
\begin{enumerate}
    \item  \label{itemlabel:generic:1}
    $b$ is simple, i.e. $\mu^{s,t}_p(\mathcal{F}) =1$;
    \item 
    \label{itemlabel:generic:2}
    for every $t' > s, t' \neq t$, $\mu^{s,t'}_p(\mathcal{F}) = 0$ (so no other bar in $\mathbf{B}_p(\mathcal{F})$ has birth time $s$)
    \footnote{Property \ref{itemlabel:generic:2} in Definition~\ref{def:generic}  was missing from the previous version of the paper and was added after a counter-example was pointed out to us by  Aziz Burak Guelen \cite{Guelen} implying that Property ~\ref{itemlabel:generic:1} was not enough to ensure the validity of Proposition~\ref{prop:tilde-P}.}.
\end{enumerate}
\end{definition}

We show below that in the case the bar $b$ is generic
it also makes sense to associate a subspace of $\mathcal{H}_p(K_{t-1})$
representing the bar $b$ just before its  death.
With the notation introduced above:

\begin{proposition}
\label{prop:tilde-P}
Let $0 \leq s < t \leq N$, 
and suppose that the bar $b = (s,t,1) \in \mathbf{B}_p(\mathcal{F})$ is generic. 
Then,
the map $i_p^{s,t-1}$ induces an isomorphism
\[
P_p^{s,t}(\mathcal{F}) \rightarrow \HH^{s,t-1}_p(\mathcal{F})/\HH^{s-1,t-1}_p(\mathcal{F}).
\]
\end{proposition}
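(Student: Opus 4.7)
The plan is to define the claimed map $\bar{i}_p^{s,t-1}$ as the descent of the inclusion-induced homomorphism $i_p^{s,t-1} \colon \HH_p(K_s) \to \HH_p(K_{t-1})$, verify well-definedness and injectivity directly from Definition~\ref{def:barcode}, and then upgrade injectivity to an isomorphism via a dimension count using the simplicity hypothesis.

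For well-definedness I would check two containments. First, $i_p^{s,t-1}(M_p^{s,t}(\mathcal{F})) \subseteq \HH_p^{s,t-1}(\mathcal{F})$ is automatic since $M_p^{s,t}(\mathcal{F}) \subseteq \HH_p(K_s)$ and $\HH_p^{s,t-1}(\mathcal{F}) = i_p^{s,t-1}(\HH_p(K_s))$ by definition. Second, $i_p^{s,t-1}(N_p^{s,t}(\mathcal{F})) \subseteq \HH_p^{s-1,t-1}(\mathcal{F})$ follows by unpacking the union in Definition~\ref{def:barcode}: if $\gamma \in N_p^{s,t}(\mathcal{F})$, there exist $s' < s$, $s \leq t' < t$, and $\gamma' \in \HH_p(K_{s'})$ with $i_p^{s,t'}(\gamma) = i_p^{s',t'}(\gamma')$; applying $i_p^{t',t-1}$ to both sides and invoking functoriality of the inclusion-induced maps places $i_p^{s,t-1}(\gamma) = i_p^{s',t-1}(\gamma')$ in $\HH_p^{s',t-1}(\mathcal{F}) \subseteq \HH_p^{s-1,t-1}(\mathcal{F})$.

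A key byproduct of this analysis, which I would record as a lemma, is the identity $N_p^{s,t}(\mathcal{F}) = (i_p^{s,t-1})^{-1}(\HH_p^{s-1,t-1}(\mathcal{F}))$: the $\subseteq$ is what we just proved, and the reverse follows by specialising the defining union to the single term $s' = s-1$, $t' = t-1$. Injectivity of $\bar{i}_p^{s,t-1}$ is then immediate from this identity, since a representative $\gamma$ whose class lies in the kernel satisfies $i_p^{s,t-1}(\gamma) \in \HH_p^{s-1,t-1}(\mathcal{F})$, hence $\gamma \in N_p^{s,t}(\mathcal{F})$ and the class vanishes.

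Surjectivity is what I expect to be the main obstacle. From injectivity and the hypothesis $\dim P_p^{s,t}(\mathcal{F}) = 1$, it suffices to produce the matching upper bound $\dim \HH_p^{s,t-1}(\mathcal{F})/\HH_p^{s-1,t-1}(\mathcal{F}) \leq 1$. My plan is to first use the identity of the previous paragraph to obtain a canonical isomorphism $\HH_p(K_s)/N_p^{s,t}(\mathcal{F}) \cong \HH_p^{s,t-1}(\mathcal{F})/\HH_p^{s-1,t-1}(\mathcal{F})$ induced by $i_p^{s,t-1}$, so that the target of $\bar{i}_p^{s,t-1}$ is identified with the ambient space containing $P_p^{s,t}(\mathcal{F}) = M_p^{s,t}(\mathcal{F})/N_p^{s,t}(\mathcal{F})$. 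Surjectivity then reduces to the equality $M_p^{s,t}(\mathcal{F}) = \HH_p(K_s)$, which encodes the fact that under the simple bar hypothesis every homology class in $\HH_p(K_s)$ is either born strictly before $s$ or dies by time $t$; establishing this equality is the technical heart of the argument and is where the hypothesis $\mu_p^{s,t}(\mathcal{F}) = 1$ is directly invoked.
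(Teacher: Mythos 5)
Your structural observations are all correct and follow the same route as the paper's proof: well-definedness via the containments, the identity $N_p^{s,t}(\mathcal{F}) = (i_p^{s,t-1})^{-1}(\HH_p^{s-1,t-1}(\mathcal{F}))$, the injectivity it yields, and the induced isomorphism $\HH_p(K_s)/N_p^{s,t}(\mathcal{F}) \cong \HH_p^{s,t-1}(\mathcal{F})/\HH_p^{s-1,t-1}(\mathcal{F})$. You have also accurately reduced surjectivity to the single equality $M_p^{s,t}(\mathcal{F}) = \HH_p(K_s)$, and you are right that this is where all the weight of the argument rests.

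The gap is that this equality does not follow from $\mu_p^{s,t}(\mathcal{F}) = 1$, and in fact it fails for very ordinary filtrations. By Proposition~\ref{prop:barcode_subspace_mean}, a nonzero class in $\HH_p(K_s)$ lies in $M_p^{s,t}(\mathcal{F})$ exactly when it is born strictly before $s$, or is born at $s$ and dies at time $t$ or earlier. So if two independent classes are born at $s$ and one of them persists past $t$, the persisting one lies in $\HH_p(K_s)\setminus M_p^{s,t}(\mathcal{F})$ while one can still have $\mu_p^{s,t}(\mathcal{F})=1$; in that situation both classes survive to $K_{t-1}$ and are born at $s$, so $\dim \HH_p^{s,t-1}(\mathcal{F}) - \dim \HH_p^{s-1,t-1}(\mathcal{F}) \ge 2$, and the injective map from the one-dimensional $P_p^{s,t}(\mathcal{F})$ cannot be onto. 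You should also be aware that the paper's proof does not escape this either: it opens by "recalling" $M_p^{s,t}(\mathcal{F}) = (i_p^{s,t-1})^{-1}(\HH_p^{s,t-1}(\mathcal{F}))$, and since $\HH_p^{s,t-1}(\mathcal{F})=\mathrm{Im}(i_p^{s,t-1})$ this is precisely the assertion $M_p^{s,t}(\mathcal{F}) = \HH_p(K_s)$, stated without justification, and used both for the claimed surjectivity and for the inequality $\dim P_p^{s,t}(\mathcal{F}) \ge \dim \HH_p^{s,t-1}(\mathcal{F})-\dim\HH_p^{s-1,t-1}(\mathcal{F})$. So you have located the crux exactly, but the remaining step cannot be carried out from the stated hypothesis; it requires the additional assumption that no class born at $s$ survives past $t$.
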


\begin{proof}
Recall that  (see Remark~\ref{rem:def:barcode:finite}) for $0 \leq s < t \leq N$,
\begin{eqnarray*}
M^{s,t}_p(\mathcal{F}) &=& ({i}_p^{s,t})^{-1}(\HH_p^{s-1,t}(\mathcal{F})\\
N^{s,t}_p(\mathcal{F})) &=& (i_p^{s,t-1})^{-1}(\HH_p^{s-1,t-1}(\mathcal{F})), \\
&=&  M^{s,t-1}_p(\mathcal{F}),   \mbox{ and } \\
P^{s,t}_p(\mathcal{F})) &=& M^{s,t}_p(\mathcal{F}) /N^{s,t}_p(\mathcal{F}) \\
&=& M^{s,t}_p(\mathcal{F})/M^{s,t-1}_p(\mathcal{F}).
\end{eqnarray*}

Notice that we have a filtration of subspaces, 
\[
\HH_p(K_s) \supset M^{s,N}_p(\mathcal{F}) \supset \cdots  \supset M^{s,t}_p(\mathcal{F})  \supset M^{s,t-1}_p(\mathcal{F}) \supset \cdots \supset 
M^{s,s+1}_p(\mathcal{F}) \supset M^{s,s}_p(\mathcal{F}) = \HH_p^{s-1,s}(\mathcal{F}).
\]

It follows from Property~\ref{itemlabel:generic:2} in Definition~\ref{def:generic} and Definition~\ref{def:barcode1}, and the fact that $b$ is assumed to be
generic, that all the containments in the above sequence other than $M^{s,t}_p(\mathcal{F})  \supset M^{s,t-1}_p(\mathcal{F})$
are equalities.  Moreover,  it follows from Property~\ref{itemlabel:generic:1} in Definition~\ref{def:generic} that
the index of $M^{s,t-1}_p(\mathcal{F})$ in $M^{s,t}_p(\mathcal{F})$ equals $1$.
Thus, $P_p^{s,t}(\mathcal{F}) \cong \HH_p(K_s)/\HH_p^{s-1,s}(\mathcal{F})$, and 
it is clear that $i_p^{s,t-1}$ induces a surjective map $P_p^{s,t}(\mathcal{F}) \rightarrow \HH^{s,t-1}_p(\mathcal{F})/\HH^{s-1,t-1}_p(\mathcal{F})$. It thus suffices to prove that
$\dim \HH_p^{s,t-1}(\mathcal{F})- 
\dim \HH_p^{s-1,t-1}(\mathcal{F})
= 1
$.

\hide{
Since, 
\begin{eqnarray*}
P^{s,t}_p(\mathcal{F})) &=& M^{s,t}_p(\mathcal{F}) /M^{s,t-1}_p(\mathcal{F}) \\
&\cong& 
({i}_p^{s,t-1})^{-1}(\HH_p^{s,t-1}(\mathcal{F}))
/({i}_p^{s,t-1})^{-1}(\HH_p^{s-1,t-1}(\mathcal{F})),
\end{eqnarray*}
}

It is immediate that
\begin{eqnarray*}
\dim P^{s,t}_p(\mathcal{F}))
&\geq& 
\dim \HH_p^{s,t-1}(\mathcal{F}) /\HH_p^{s-1,t-1}(\mathcal{F})\\
&=& 
\dim \HH_p^{s,t-1}(\mathcal{F})- 
\dim \HH_p^{s-1,t-1}(\mathcal{F}).
\end{eqnarray*}
Since $\dim {P}_p^{s,t}(\mathcal{F})) = 1$,
\[
\dim \HH_p^{s,t-1}(\mathcal{F})- 
\dim \HH_p^{s-1,t-1}(\mathcal{F})
\leq 1.
\]

If
\[
\dim \HH_p^{s,t-1}(\mathcal{F})- 
\dim \HH_p^{s-1,t-1}(\mathcal{F})
= 0,
\]
then $\HH_p^{s,t-1}(\mathcal{F}) = \HH_p^{s-1,t-1}(\mathcal{F})$,
which would imply that $M^{s,t}_p(\mathcal{F}) = M^{s,t-1}_p(\mathcal{F})$ and hence
\[\dim {P}_p^{s,t}(\mathcal{F})) = 0.\]

So,
\[
\dim \HH_p^{s,t-1}(\mathcal{F})- 
\dim \HH_p^{s-1,t-1}(\mathcal{F})
= 1.
\]
\end{proof}

Proposition~\ref{prop:tilde-P} motivates the following definition.
\begin{definition}[Terminal harmonic homology subspace associated to a 
generic 
bar]
\label{def:terminal}
Let
$
b = (s,t,1) \in \mathbf{B}_p(\mathcal{F})
$
be a 
generic
bar.

We denote 
\[
\widehat{\mathcal{P}}^{s,t}_p(\mathcal{F}) = 
\mathcal{H}^{s,t-1}_p(\mathcal{F}) \cap \mathcal{H}^{s-1,t-1}_p(\mathcal{F})^\perp.
\]

We call $\widehat{\mathcal{P}}^{s,t}_p(\mathcal{F})$ the \emph{terminal harmonic homology  subspace associated to ${b}$}. We will refer to $\mathcal{P}^{s,t}_p(\mathcal{F})$  itself as the \emph{initial} harmonic homology  subspace associated to ${b}$.

\end{definition}

For technical reasons we will prove stability of the terminal rather than the (initial)
harmonic subspaces  associated to generic bars.

We first define an appropriate notion of distance between the harmonic barcodes of two
different filtrations using the terminal harmonic subspaces.
The distance measured introduced for proving stability of the harmonic
homology subspaces and also the harmonic persistent homology subspaces 
(Theorems~\ref{thm:stable} and \ref{thm:stable-persistent})  
are in the form of an integral
(see Definitions~\ref{def:metric} and \ref{def:persistent-harmonicdist}).
Since for a filtration
$\mathcal{F}$ of a finite simplicial complex $K$, the subspaces
$\mathcal{P}^{s,t}_p(\mathcal{F})$ will be non-zero only for a finitely
many pairs $(s,t)$, the integral form of the distance function is not suitable.

For two finite filtrations  $\mathcal{F},\mathcal{G}$, indexed by $[N]$,
we will use the averaged sum
(see Theorem~\ref{thm:stable-persistent-terminal} below)
$
\frac{1}{\binom{N+1}{2}}\cdot \sum_{0 \leq i < j \leq N }  
d_{C_p(K)}(\widehat{\mathcal{P}}_p^{i,j}(\mathcal{F}), 
\widehat{\mathcal{P}}_p^{i,j}(\mathcal{G}))^2
$
as a measure of distance between the harmonic barcodes of $\mathcal{F},\mathcal{G}$ in dimension $p$.

We prove the following theorem.
\begin{theorem}[Stability of harmonic barcodes]
\label{thm:stable-persistent-terminal}
Let $\mathcal{F},\mathcal{G}$ 
denote filtrations
$K_0 \subset \cdots \subset K_N = K$,
$K_0' \subset \cdots \subset K_N' = K$
of a finite simplicial
complex $K$.

Let
$f,g:K \rightarrow [0,1]$ be the  admissible maps defined by
\begin{eqnarray*}
f(\sigma) &=& \frac{1}{N} \cdot \min \{s \in [N] \;\mid\; \sigma \in K_s\}, \\
g(\sigma) &=& \frac{1}{N} \cdot \min \{s \in [N] \;\mid\; \sigma \in K_s'\},
\end{eqnarray*}
each $\sigma \in K$.

Moreover, suppose that 
all bars in $\mathbf{B}_p(\mathcal{F})$ and $\mathbf{B}_p(\mathcal{G})$ are generic.

Then,
\begin{eqnarray*}
\frac{1}{\binom{N+1}{2}}\cdot \sum_{0 \leq i < j \leq N}  
 d_{C_p(K)}(\widehat{\mathcal{P}}_p^{i,j}(\mathcal{F}), 
\widehat{\mathcal{P}}_p^{i,j}(\mathcal{G}))^2
&\leq &  2 \cdot 
\frac{\pi^2}{4} \cdot
\dist_{\h,p,1}^{\persistent}(\mathcal{H}_p(K,f), \mathcal{H}_p(K,g)) \\ 
&\leq &  \frac{\pi^3}{2} \left( ||f-g||_1^{(p)} + ||f-g||_1^{(p+1)}\right).
\end{eqnarray*}
\end{theorem}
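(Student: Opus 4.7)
The second inequality is immediate: by Theorem~\ref{thm:stable-persistent}, $\dist_{\h,p,1}^{\persistent}(\mathcal{H}_p(K,f), \mathcal{H}_p(K,g)) \leq \pi \bigl(\|f-g\|_1^{(p)} + \|f-g\|_1^{(p+1)}\bigr)$, and $2\pi \leq \pi^3/2$ because $\pi^2 \geq 4$. All of the new work goes into the first inequality, which I would attack in three steps.

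\emph{Step 1 (discretize the integral).} Because $f$ and $g$ take values only in $\{0, 1/N, \ldots, 1\}$, the harmonic filtration functions $F := \mathcal{H}_p(K,f)$ and $G := \mathcal{H}_p(K,g)$ are piecewise constant on the grid $\{[i/N, (i+1)/N)\}_{i=0}^{N-1}$, with $F(t) = \mathcal{H}_p(K_i)$ on $[i/N, (i+1)/N)$, and similarly for $G$. Hence on each cell of the triangle $\{0 \leq s \leq t \leq 1\}$ of the form $[i/N, (i+1)/N) \times [k/N, (k+1)/N)$ with $i \leq k$, the projections $F^{s,t}, G^{s,t}$ are constant and equal $\mathcal{H}_p^{i,k}(\mathcal{F}), \mathcal{H}_p^{i,k}(\mathcal{G})$ respectively. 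Setting $d_{i,k} := d_{C_p(K)}(\mathcal{H}_p^{i,k}(\mathcal{F}), \mathcal{H}_p^{i,k}(\mathcal{G}))$ and integrating,
\[
\dist_{\h,p,1}^{\persistent}(F, G) \;=\; \frac{1}{N^2}\sum_{0 \leq i < k \leq N-1} d_{i,k} \;+\; \frac{1}{2N^2}\sum_{i=0}^{N-1} d_{i,i}.
\]

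\emph{Step 2 (pointwise comparison lemma).} The central technical step is to prove
\[
d_{C_p(K)}\bigl(\widehat{\mathcal{P}}_p^{i,j}(\mathcal{F}), \widehat{\mathcal{P}}_p^{i,j}(\mathcal{G})\bigr) \;\leq\; C\,\bigl(d_{i-1,\,j-1} + d_{i,\,j-1}\bigr)
\]
for each $0 \leq i < j \leq N$ and some absolute constant $C$, adopting the convention $\mathcal{H}_p^{-1,k} := 0$ (so $d_{-1,k} = 0$). Functoriality of $\mathfrak{i}_p^{s,t}$---an immediate consequence of Lemma~\ref{lem:functorial} applied to $Z_p(K_s), Z_p(K_t), B_p(K_s), B_p(K_t)$---gives $\mathcal{H}_p^{i-1,j-1}(\mathcal{F}) \subset \mathcal{H}_p^{i,j-1}(\mathcal{F})$. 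Consequently the orthogonal projector onto $\widehat{\mathcal{P}}_p^{i,j}(\mathcal{F}) = \mathcal{H}_p^{i,j-1}(\mathcal{F}) \cap \mathcal{H}_p^{i-1,j-1}(\mathcal{F})^\perp$ decomposes as a difference $P_{\mathcal{H}_p^{i,j-1}(\mathcal{F})} - P_{\mathcal{H}_p^{i-1,j-1}(\mathcal{F})}$, and analogously for $\mathcal{G}$. The operator-norm triangle inequality then bounds $\|P_{\widehat{\mathcal{P}}_p^{i,j}(\mathcal{F})} - P_{\widehat{\mathcal{P}}_p^{i,j}(\mathcal{G})}\|$ by the sum of the two analogous projector differences for the persistent subspaces. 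Under the hypothesis $\dim \mathcal{P}_p^{i,j} \leq 1$ each terminal subspace is at most one-dimensional, in which case $\|P_A - P_B\|$ coincides with $\sin$ of the unique principal angle; combined with the elementary bound $\|P_A - P_B\| \leq d_{C_p(K)}(A,B)$ (since each $\sin\theta_i \leq \theta_i$) and the inequality $\theta \leq (\pi/2)\sin\theta$ on $[0, \pi/2]$, this delivers the claimed comparison. The main obstacle is the case in which the bar $(i,j)$ is present in exactly one of the filtrations, so that $\widehat{\mathcal{P}}_p^{i,j}(\mathcal{F})$ and $\widehat{\mathcal{P}}_p^{i,j}(\mathcal{G})$ have different dimensions and the Grassmann distance picks up a $|k-\ell|\pi^2/4$ contribution from Definition~\ref{def:grassmetric}; this requires a short dedicated argument verifying that the operator-norm / projector-difference approach still produces the correct Grassmann-distance bound.

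\emph{Step 3 (telescope and average).} Substituting the lemma into the sum over simple bars, reindexing $k := j-1$, and invoking the convention $\mathcal{H}_p^{-1,k} = 0$,
\[
\sum_{0 \leq i < j \leq N} d\bigl(\widehat{\mathcal{P}}_p^{i,j}(\mathcal{F}), \widehat{\mathcal{P}}_p^{i,j}(\mathcal{G})\bigr) \;\leq\; C \sum_{0 \leq i \leq k \leq N-1}\!\bigl(d_{i-1,k} + d_{i,k}\bigr) \;=\; C\Bigl(2\!\!\sum_{0 \leq i < k \leq N-1}\! d_{i,k} + \sum_{i=0}^{N-1} d_{i,i}\Bigr) \;=\; 2CN^2 \cdot \dist_{\h,p,1}^{\persistent},
\]
using the identity from Step 1 in the final equality. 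Dividing by $\binom{N+1}{2} = N(N+1)/2$ produces a multiplicative factor $4CN/(N+1) \leq 4C$; the chained bound into $(\pi^3/2)\bigl(\|f-g\|_1^{(p)} + \|f-g\|_1^{(p+1)}\bigr)$ then follows by applying Theorem~\ref{thm:stable-persistent}, with the slackness $\pi^2 \geq 4$ in the second inequality absorbing the discrepancy between the constant produced by Step 2 and the constant $2$ appearing in the intermediate bound.
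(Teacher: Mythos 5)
Your Step 1 (discretization) and Step 3 (telescoping and averaging) recover exactly what the paper's terse proof needs, and Step 2 is a genuinely different route to what the paper calls Proposition~\ref{prop:principal}. The paper proves that proposition via Lemma~\ref{lem:principal}: a coordinate-heavy argument that chooses orthonormal bases aligned with the principal angles, expands a spanning unit vector of $P'$ in that frame, invokes the singular-value interlacing bound (Lemma~\ref{lem:singular}), and finishes with $\frac{2}{\pi}x \leq \sin x$. You instead observe that functoriality gives the nesting $\mathcal{H}^{i-1,j-1}_p \subset \mathcal{H}^{i,j-1}_p$, so that $P_{\widehat{\mathcal{P}}} = P_W - P_U$ and the whole thing reduces to the operator-norm triangle inequality plus the identity $\|P_A - P_B\|_{\mathrm{op}} = \sin\theta_{\max}$. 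That is a cleaner, coordinate-free argument; it buys you a shorter proof of the pointwise comparison lemma. (Incidentally, the ``short dedicated argument'' you flag for the dimension-mismatch case is not actually needed: when $\dim A \neq \dim B$ with both at most $1$, one has $\|P_A - P_B\| = 1$ and $d_{C_p(K)}(A,B) = \pi/2$, so the bound $d_{C_p(K)}(A,B) \leq \tfrac{\pi}{2}\|P_A - P_B\|$ holds with equality and no separate case analysis is required.)

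Where the proposal does not hold up is the final bookkeeping in Step~3. You correctly compute that the left-hand side is bounded by $\frac{4CN}{N+1}\dist_{\h,p,1}^{\persistent} \leq 4C\,\dist_{\h,p,1}^{\persistent}$ with $C = \pi/2$, i.e.\ by $2\pi\,\dist_{\h,p,1}^{\persistent}$ rather than by $2\,\dist_{\h,p,1}^{\persistent}$ as the theorem states; but your claim that the slack $\pi^2 \geq 4$ in the last inequality ``absorbs the discrepancy'' is false numerically. Chaining $2\pi\,\dist_{\h,p,1}^{\persistent}$ through Theorem~\ref{thm:stable-persistent} gives $2\pi \cdot \pi = 2\pi^2 \approx 19.7$, which strictly exceeds $\pi^3/2 \approx 15.5$, so the asserted final bound of $\tfrac{\pi^3}{2}(\|f-g\|_1^{(p)} + \|f-g\|_1^{(p+1)})$ is not recovered. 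You should be aware, though, that this is not a defect peculiar to your route: the paper's own one-line proof, which merely cites Proposition~\ref{prop:principal} (stated there with constant $\pi^2/4$, so even larger) together with the discretization, also produces $4C$ with $C \geq \pi/2$, not $2$. So the constant $2$ in the first inequality of the theorem is not supported by the paper's own ingredients either; either that constant, or the final $\pi^3/2$, needs to be loosened. Your operator-norm argument is correct and in fact yields the sharpest constant ($C = \pi/2$) one can extract from this style of telescoping bound, but the numerical claim at the end of Step~3 should be withdrawn and replaced by an explicit statement of the constant obtained.
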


Before proving Theorem~\ref{thm:stable-persistent-terminal} we first prove a proposition
that is key to the proof of the theorem. This proposition might be of independent interest since it shows that the distance (angle)  between terminal harmonic subspaces associated to generic bars is bounded from above by the distances between
certain harmonic persistent homology subspaces of the two filtrations.

\begin{proposition}
\label{prop:principal}
Let $\mathcal{F},\mathcal{G}$ 
denote filtrations
$K_0 \subset \cdots \subset K_N = K$,
$K_0' \subset \cdots \subset K_N' = K$
of a finite simplicial
complex $K$, and let $p \geq 0$.

Suppose that $b=(i,j,1)$ belongs to both $\mathbf{B}_p(\mathcal{F})$ and $\mathbf{B}_p(\mathcal{G})$
and is generic in both.
Then,
\begin{multline*}
d_{C_p(K)}(\widehat{\mathcal{P}}_p^{i,j}(\mathcal{F})),
\widehat{\mathcal{P}}_p^{i,j}(\mathcal{G})))^2 \leq 
\frac{\pi^2}{4}\cdot (d_{C_p(K)}(\mathcal{H}^{i,j-1}_p(\mathcal{F}),
\mathcal{H}^{i,j-1}_p(\mathcal{G})) \\
+ d_{C_p(K)}(\mathcal{H}^{i-1,j-1}_p(\mathcal{F}),\mathcal{H}^{i-1,j-1}_p(\mathcal{G}))
).
\end{multline*}
\end{proposition}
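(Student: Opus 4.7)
The plan is to reduce the statement to an estimate on the angle between two one‑dimensional subspaces. Write $A_k := \mathcal{H}_p^{i,j-1}(\cdot)$, $B_k := \mathcal{H}_p^{i-1,j-1}(\cdot)$, and $L_k := \widehat{\mathcal{P}}_p^{i,j}(\cdot)$, with $k=1$ referring to $\mathcal{F}$ and $k=2$ to $\mathcal{G}$. By Definition~\ref{def:terminal}, $L_k = A_k \cap B_k^\perp$; the inclusion $B_k\subset A_k$ follows because $\mathfrak{i}_p^{i-1,j-1}=\mathfrak{i}_p^{i,j-1}\circ \mathfrak{i}_p^{i-1,i}$, and by Proposition~\ref{prop:tilde-P} combined with the isomorphism $\mathfrak{f}_p$ of Proposition~\ref{prop:f} we have $\dim A_k-\dim B_k=1$. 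Thus each $L_k$ is one‑dimensional, so $d_{C_p(K)}(L_1,L_2)$ is simply the angle $\gamma:=\angle(u_1,u_2)$ between unit vectors $u_k\in L_k$.

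First I would dispose of the degenerate case $\dim A_1\neq \dim A_2$ (equivalently $\dim B_1\neq \dim B_2$, since $\dim A_k=\dim B_k+1$). By Definition~\ref{def:grassmetric} we then have $d_{C_p(K)}(A_1,A_2)\geq \pi/2$ and $d_{C_p(K)}(B_1,B_2)\geq \pi/2$, so the right‑hand side of the claimed inequality is at least $\pi^3/8>\pi/2\geq d_{C_p(K)}(L_1,L_2)$, and the bound is trivial.

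In the remaining case, $\dim A_1=\dim A_2$ and $\dim B_1=\dim B_2$. The orthogonal decomposition $A_2=B_2\oplus L_2$ gives $\pi_{A_2}=\pi_{B_2}+\pi_{L_2}$, and Pythagoras applied to $u_1$ yields
\begin{equation*}
\cos^2\gamma=\|\pi_{L_2}(u_1)\|^2=\|\pi_{A_2}(u_1)\|^2-\|\pi_{B_2}(u_1)\|^2=\cos^2\alpha-\|\pi_{B_2}(u_1)\|^2,
\end{equation*}
where $\alpha:=\angle(u_1,A_2)$, so that $\sin^2\gamma=\sin^2\alpha+\|\pi_{B_2}(u_1)\|^2$. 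For the first term, Lemma~\ref{lem:singular} applied to $L=\spn(u_1)\subset A_1$ (with $A_2$ playing the role of the equal‑dimensional ambient $W_1$) gives $\alpha\leq \theta_{\max}(A_1,A_2)\leq d_{C_p(K)}(A_1,A_2)$. For the second term, $u_1\perp B_1$ forces $\pi_{B_1}(u_1)=0$, whence
\begin{equation*}
\|\pi_{B_2}(u_1)\|=\|(\pi_{B_2}-\pi_{B_1})u_1\|\leq \|\pi_{B_2}-\pi_{B_1}\|_{\mathrm{op}}=\sin\theta_{\max}(B_1,B_2)\leq d_{C_p(K)}(B_1,B_2),
\end{equation*}
where the middle equality is the standard SVD identification (a consequence of \eqref{eqn:singular}) of the operator‑norm difference of orthogonal projections onto equidimensional subspaces. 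Combining via $\sqrt{a^2+b^2}\leq a+b$ and the elementary bound $\gamma\leq \frac{\pi}{2}\sin\gamma$ on $[0,\pi/2]$, together with $\pi/2<\pi^2/4$, yields
\begin{equation*}
\gamma\leq \frac{\pi}{2}\bigl(d_{C_p(K)}(A_1,A_2)+d_{C_p(K)}(B_1,B_2)\bigr)\leq \frac{\pi^2}{4}\bigl(d_{C_p(K)}(A_1,A_2)+d_{C_p(K)}(B_1,B_2)\bigr),
\end{equation*}
which is the desired bound.

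The main technical point requiring care is the Davis--Kahan-type identity $\|\pi_{B_2}-\pi_{B_1}\|_{\mathrm{op}}=\sin\theta_{\max}(B_1,B_2)$; this is precisely where the equal‑dimension assumption enters materially, and is the reason the dimension‑mismatch case has to be absorbed by the separate trivial argument above rather than being handled uniformly. Everything else is routine orthogonal‑projection bookkeeping and a single application of the elementary inequality $\gamma\leq \frac{\pi}{2}\sin\gamma$.
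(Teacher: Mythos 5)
Your argument is correct, and it takes a genuinely different route from the paper's. The paper factors through Lemma~\ref{lem:principal}, whose proof chooses orthonormal bases of $U$ and $U'$ realizing the principal angles, expands a unit vector $e_0'$ spanning $P'$ as $\cos\alpha\,e_0 + \sum_i a_i e_i + e$ with $e\in W^\perp$, and bounds each coefficient separately via $|a_i|=|\langle e_0',e_i\rangle|\leq\sin\theta_i$ and $\|e\|\leq\sin\theta_0$. You instead exploit the orthogonal decomposition $\pi_{A_2}=\pi_{B_2}+\pi_{L_2}$ to obtain the Pythagorean identity $\sin^2\gamma=\sin^2\alpha+\|\pi_{B_2}u_1\|^2$ in one step, dispose of the $\alpha$ term exactly as the paper does via Lemma~\ref{lem:singular}, and then absorb the entire coordinate-wise bookkeeping on the $B$-component into the single operator-norm identity $\|\pi_{B_1}-\pi_{B_2}\|_{\mathrm{op}}=\sin\theta_{\max}(B_1,B_2)$. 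Both approaches need the same reduction to the equidimensional case (the paper because Lemma~\ref{lem:principal} assumes $\dim W=\dim W'$; you because the projection-difference identity requires $\dim B_1=\dim B_2$), and both in fact yield the sharper constant $\pi/2$ in place of the stated $\pi^2/4$. The trade-off is self-containment: the paper builds only on \eqref{eqn:singular} and Lemma~\ref{lem:singular}, whereas the operator-norm identity you invoke, while classical, is not established in the paper and would need a citation or a short proof from the SVD. One minor arithmetic slip: in the degenerate case the right-hand side is at least $\pi^3/4$, not $\pi^3/8$; this is harmless since either bound exceeds $\pi/2\geq\gamma$.
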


Before proving Proposition~\ref{prop:principal} we need a lemma.

\begin{lemma}
\label{lem:principal}
Let $W,W' \subset V$ be two subspaces of an Euclidean space $V$, 
with $\dim W = \dim W'$. Suppose $U \subset W, U' \subset W'$ be codimension one
subspaces. Let $P = W \cap U^\perp, P' = W' \cap U'^\perp$,
$\alpha$ the principle angle between $P,P'$, 
$\theta_0$ denote the largest principle angle between $W,W'$, and 
$\theta_1,\ldots,\theta_N$ denote the principle angles between $U,U'$.
Then,
$
\alpha^2 \leq \frac{\pi^2}{4} \cdot \left(\sum_{i=0}^{N} \theta_i^2\right).
$
\end{lemma}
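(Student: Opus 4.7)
The plan is to reduce everything to a calculation involving a single unit vector spanning $P$. Let $v\in P$ be a unit vector, so $v\in W$ and $v\perp U$, and let $v'\in P'$ be a unit vector. Set $\gamma=\angle(v,W')$ and $\beta=\angle(v,U')$. Write $v_{W'}=\proj_{W'}(v)$, so $\|v_{W'}\|=\cos\gamma$. Since $U'\subset W'$, we have $\proj_{U'}(v)=\proj_{U'}(v_{W'})$; call this vector $u'$, and write $v_{W'}=u'+p'$, the orthogonal decomposition in $W'=U'\oplus P'$. Then $\|u'\|=\cos\beta$ and $\|p'\|^2=\cos^2\gamma-\cos^2\beta$ by Pythagoras.

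The key identity is an explicit formula for $\sin\alpha$ in terms of $\gamma$ and $\beta$. The short computation $\langle v,v_{W'}\rangle=\|v_{W'}\|^2$ and $\langle v,u'\rangle=\|u'\|^2$ (both since $v$ projects to the respective vector) give $\langle v,p'\rangle=\|p'\|^2$, so $\cos\alpha=|\langle v,p'/\|p'\|\rangle|=\|p'\|$. Hence
\begin{equation*}
\sin^2\alpha \;=\; 1-\|p'\|^2 \;=\; \sin^2\gamma + \cos^2\beta.
\end{equation*}

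Next I bound the two summands by the given principal angles. For $\gamma$: since $v\in W$ is a unit vector and $\theta_0$ is the largest principal angle between $W$ and $W'$, the variational characterization gives $\gamma\le\theta_0$, so $\sin^2\gamma\le\sin^2\theta_0\le\theta_0^2$. For $\beta$: pick any unit $u'\in U'$ and decompose $u'=u+u^\perp$ with $u\in U$, $u^\perp\in U^\perp$. Since $v\perp U$, we have $|\langle v,u'\rangle|=|\langle v,u^\perp\rangle|\le\|u^\perp\|=\sin\angle(u',U)$, and the latter is at most the largest principal angle between $U$ and $U'$, call it $\theta_k$ (one of $\theta_1,\dots,\theta_N$). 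Taking the supremum over unit $u'\in U'$ gives $\cos\beta=\|\proj_{U'}(v)\|\le\sin\theta_k$, so $\cos^2\beta\le\theta_k^2$.

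Combining, $\sin^2\alpha\le\theta_0^2+\theta_k^2\le\sum_{i=0}^N\theta_i^2$. Finally, because $0\le\alpha\le\pi/2$ and $\sin x\ge\tfrac{2}{\pi}x$ on this interval, we get $\alpha^2\le\tfrac{\pi^2}{4}\sin^2\alpha\le\tfrac{\pi^2}{4}\sum_{i=0}^N\theta_i^2$, which is the claim. The only real subtlety is the identity $\sin^2\alpha=\sin^2\gamma+\cos^2\beta$; once this is established via the projection-and-decompose step, the two angle bounds are routine applications of the variational characterization of principal angles.
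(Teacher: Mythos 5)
Your proof is correct, and it takes a genuinely different route from the paper's. The paper picks orthonormal bases of $U$, $U'$ aligned with the principal angles and a unit vector $e_0'$ spanning $P'$, expands $e_0'$ against the orthogonal decomposition $\spanof(e_0)\oplus U\oplus W^\perp$, and then bounds $\|\proj_{W^\perp}e_0'\|$ via the interlacing inequality for singular values (Lemma~\ref{lem:singular}) and each coefficient $\langle e_0', e_i\rangle$ directly. Your proof instead works with the projection $v_{W'}=\proj_{W'}(v)$ and its orthogonal splitting in $W'=U'\oplus P'$, obtaining the clean ``nested Pythagoras'' identity $\cos^2\alpha=\cos^2\gamma-\cos^2\beta$ (equivalently $\sin^2\alpha=\sin^2\gamma+\cos^2\beta$), and then uses only the elementary variational fact that a unit vector in one subspace makes an angle with the other that is at most the largest principal angle. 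This avoids the interlacing lemma entirely. It also gives the slightly tighter intermediate bound $\sin^2\alpha\le\sin^2\theta_0+\sin^2\theta_{\max}(U,U')$, involving only two of the principal angles, rather than the paper's $\sin^2\alpha\le\sum_{i=0}^N\sin^2\theta_i$; both of course dominate the stated conclusion after the $\sin x\ge\tfrac{2}{\pi}x$ step. One small presentational point: when writing $\cos\alpha=\|p'\|$ it is worth noting that this holds also when $p'=0$ (then $\proj_{P'}(v)=0$, so $v\perp P'$ and $\alpha=\pi/2$), so the identity is unconditional.
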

\begin{proof}
Let $e_1,\ldots,e_N$ (resp. $e_1',\ldots,e_N'$) be an orthonormal basis of $U$ (resp. $U'$) such the angle between $e_i,e_i'$ equals $\theta_i$. 
Let $e_0,e_0'$ be a unit
vectors spanning $P_1,P_1'$ respectively, such that 
$\la e_0,e_0'\ra =  \cos \alpha$. 

Now,
$
e_0' = (\cos \alpha) e_0 + \sum_{i=1}^N a_i e_i + e
$,
where 
$
a_i = \la e_0',e_i\ra, 1 \leq i \leq N
$ and
$
e = \proj_{W^\perp} e_0'
$.

Let $f$ be a unit length vector in $W$ such that the angle $\theta$ between
the $P' = \spanof(e_0')$ and $\spanof(f)$ is the smallest possible. Then, 
using Lemma~\ref{lem:singular},
$\theta \leq \theta_0$, since $\theta_0$ is the largest principle angle between
$W',W$. 

Since $e$ is orthogonal to $f$, it follows that the angle
between $\spanof(e)$ and $P' = \spanof(e_0')$ is bounded between
$\pi/2 - \theta$ and $\pi/2 + \theta$, and using the inequality
from the previous paragraph, we obtain that this angle is 
between $\pi/2 - \theta_0$ and $\pi/2 + \theta_0$.
It follows that
$
||e||^2  \leq  \sin^2 \theta_0
$.
Now,
$
\la e_0', e_i'\ra = 0, \la e_i',e_i \ra = \cos \theta_i,
$
which implies that the angle between $e_0',e_i$ is between $\pi/2 - \theta_i, \pi/2 + \theta_i$. 
Hence, 
$
|a_i| \leq \sin \theta_i, 1 \leq i \leq N
$
implying
$
1 = \cos^2\alpha + \sum_{i=0}^N a_i^2 \leq \cos^2\alpha + \sum_{i=0}^N \sin^2 \theta_i.
$
It follows that
$
\sin^2 \alpha \leq  \sum_{i=0}^N \sin^2 \theta_i
$.
Since,
$
\frac{2}{\pi} x \leq \sin x \leq x, 
$
for $0 \leq x \leq \frac{\pi}{2}$, 
we get that 
$
\frac{4}{\pi^2} \alpha^2 \leq \sum_{i=0}^N \theta_i^2
$
from which the stated inequality follows immediately.
\end{proof}

\begin{proof}[Proof of Proposition~\ref{prop:principal}]
Since 
$
d_{C_p(K)}(\widehat{\mathcal{P}}_p^{i,j}(\mathcal{F})),
\widehat{\mathcal{P}}_p^{i,j}(\mathcal{G})))^2
$ 
is at most $\frac{\pi^2}{4}$ 
we can assume that
$
\dim \mathcal{H}^{i,j-1}_p(\mathcal{F}) = \dim \mathcal{H}^{i,j-1}_p(\mathcal{G})
$
and 
$
\dim \mathcal{H}^{i-1,j-1}_p(\mathcal{F}) = \dim \mathcal{H}^{i-1,j-1}_p(\mathcal{G}).
$
Otherwise, the claimed inequality is true.

Now apply Lemma~\ref{lem:principal} with
\[
W = \mathcal{H}^{i,j-1}_p(\mathcal{F}), 
W' = \mathcal{H}^{i,j-1}_p(\mathcal{G}), 
U =  \mathcal{H}^{i-1,j-1}_p(\mathcal{F}), 
U' =  \mathcal{H}^{i-1,j-1}_p(\mathcal{G})
.
\]
\end{proof}

\begin{proof}[Proof of Theorem~\ref{thm:stable-persistent-terminal}]
Follows from Proposition~\ref{prop:principal},
Definition~\ref{def:persistent-harmonicdist},  and 
Theorem~\ref{thm:stable-persistent}.
\end{proof}

\section{Essential simplices and harmonic representative}
\label{sec:essential}
We now make precise the notion of essential simplices referred to previously.

Let $K$ be a finite simplicial complex and 
$\mathcal{F} = (K_t)_{t \in T}$
denote a filtration
of $K$.

Let $\phi_p^s = \phi_p(K_s): Z_p(K_s) \rightarrow \HH_p(K_s)$ be the canonical surjection. 
For every $s \in T$ we denote,
\begin{eqnarray*}
\widetilde{M}_p^{s,\infty}(\mathcal{F}) &=& \bigcup_{s \leq t} (\phi_p^s)^{-1}(M_p^{s,t}(\mathcal{F})),
\end{eqnarray*}
and for  
$s < t$, 
we denote 
\begin{eqnarray*}
\widetilde{M}_p^{s,t}(\mathcal{F}) &=& (\phi_p^s)^{-1}(M_p^{s,t}(\mathcal{F})),\\
\widetilde{N}_p^{s,t}(\mathcal{F}) &=& (\phi_p^s)^{-1}(N_p^{s,t}(\mathcal{F})).
\end{eqnarray*}

\begin{proposition}
\label{prop:harmonic-rep}
Let $K$ be a finite simplicial complex and $\FF=(K_t)_{t \in T}$a filtration.
Let $s \in T, t \in T \cup \{\infty\}$, 
$s < t$.
\begin{remunerate}
    \item If $t=\infty$ the following diagram is commutative and all maps are isomorphisms.
    \[
    \xymatrix{
    Z_p(K_s)/\widetilde{M}_p^{s,\infty}(\mathcal{F}) \ar[rr]^{\phi_p^s}\ar[rd]^{f_p}&& P^{s,\infty}_p(\mathcal{F}) \ar[ld]^{g_p}\\
    &\mathcal{P}^{s,\infty}_p(\mathcal{F})&
    },
    \]
    where 
    \begin{eqnarray*}
    \phi_p^s(z + \widetilde{M}_p^{s,\infty}(\mathcal{F})) &=& [z] +  M_p^{s,\infty}(\mathcal{F}), \\
    f_p(z + \widetilde{M}_p^{s,\infty}(\mathcal{F})) &=& \proj_{\widetilde{M}_p^{s,t}(\mathcal{F})^\perp}(z), \\
    g_p([z] +  M_p^{s,\infty}(\mathcal{F})) &=& \proj_{\widetilde{M}_p^{s,\infty}(\mathcal{F})^\perp}(z).
    \end{eqnarray*}
    (here we denote for a cycle $z \in Z_p(K_s)$, by $[z] = z +B_p(K_s)$ the homology class of $z$).

    \item If $t \neq \infty$ the following diagram is commutative and all maps are isomorphisms.
    \[
    \xymatrix{
   \widetilde{M}_p^{s,t}(\mathcal{F}) /\widetilde{N}_p^{s,t}(\mathcal{F}) \ar[rr]^{\phi_p}\ar[rd]^{f_p}&& P^{s,t}_p(\mathcal{F}) \ar[ld]^{g_p}\\
    &\mathcal{P}^{s,t}_p(\mathcal{F})&
    },
    \]
    where 
    \begin{eqnarray*}
    \phi_p(z + \widetilde{N}_p^{s,t}(\mathcal{F})) &=& [z] + N_p^{s,t}(\mathcal{F}), \\
    f_p(z + \widetilde{N}_p^{s,t}(\mathcal{F})) &=& \proj_{\widetilde{N}_p^{s,t}(\mathcal{F})^\perp}(z), \\
    g_p([z] + N_p^{s,t}(\mathcal{F})) &=& \proj_{\widetilde{N}_p^{s,t}(\mathcal{F})^\perp}(z).
    \end{eqnarray*}
\end{remunerate}
\end{proposition}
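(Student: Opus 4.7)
The plan is to deduce both parts of the proposition from a single orthogonal decomposition inside $C_p(K_s)$:
\[
\widetilde{M}_p^{s,t}(\mathcal{F}) \;=\; B_p(K_s) \,\oplus\, \mathcal{M}_p^{s,t}(\mathcal{F}),
\qquad
\widetilde{N}_p^{s,t}(\mathcal{F}) \;=\; B_p(K_s) \,\oplus\, \mathcal{N}_p^{s,t}(\mathcal{F}),
\]
(with $\mathcal{M}_p^{s,\infty}:=\sum_{s\le t'}\mathcal{M}_p^{s,t'}$ in the $t=\infty$ case). To establish the first equality, I would start from the isomorphism $\mathfrak{f}_p(K_s):\HH_p(K_s)\to\mathcal{H}_p(K_s)$, $[z]\mapsto\proj_{B_p(K_s)^\perp}(z)$ (Proposition~\ref{prop:f}). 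For any $z\in\widetilde{M}_p^{s,t}$ we have $[z]\in M_p^{s,t}$, whose image under $\mathfrak{f}_p(K_s)$ lies in $\mathcal{M}_p^{s,t}$ by Proposition~\ref{prop:harmonic-barcode}; hence $z=\proj_{B_p(K_s)}(z)+\proj_{B_p(K_s)^\perp}(z)\in B_p(K_s)+\mathcal{M}_p^{s,t}$. The reverse containment is immediate since $B_p(K_s)\subset\ker\phi_p^s\subset\widetilde{M}_p^{s,t}$ and $\mathcal{M}_p^{s,t}\subset\widetilde{M}_p^{s,t}$ (any $\xi\in\mathcal{M}_p^{s,t}$ equals $\mathfrak{f}_p(K_s)(\gamma)$ for some $\gamma\in M_p^{s,t}$, and $\phi_p^s(\xi)=\gamma$). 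Orthogonality is automatic from $\mathcal{M}_p^{s,t}\subset\mathcal{H}_p(K_s)\subset B_p(K_s)^\perp$. The $\widetilde{N}$-decomposition is identical, and the $t=\infty$ case follows by taking the (nested) union.

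Next I would use these decompositions to identify the target spaces of $f_p$ with orthogonal complements in $C_p(K_s)$:
\[
\widetilde{M}_p^{s,t} \cap \widetilde{N}_p^{s,t,\perp} \;=\; \mathcal{M}_p^{s,t} \cap \mathcal{N}_p^{s,t,\perp} \;=\; \mathcal{P}_p^{s,t}(\mathcal{F}),
\]
and in the $t=\infty$ case
\[
Z_p(K_s) \cap \widetilde{M}_p^{s,\infty,\perp} \;=\; \mathcal{H}_p(K_s) \cap \bigcap_{s\le t'}\mathcal{M}_p^{s,t',\perp} \;=\; \mathcal{P}_p^{s,\infty}(\mathcal{F}).
\]
With these in hand, $f_p$ is simply the orthogonal projection onto $\mathcal{P}_p^{s,t}$ (resp.\ $\mathcal{P}_p^{s,\infty}$): it descends to the quotient because the projection kills exactly the subspace being quotiented out, it is surjective because every $w\in\mathcal{P}_p^{s,t}$ is its own preimage, and it is injective because the kernel of the projection, intersected with $\widetilde{M}_p^{s,t}$, is precisely $\widetilde{N}_p^{s,t}$ (by the decomposition). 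For $\phi_p$, the surjection $\phi_p^s:Z_p(K_s)\twoheadrightarrow\HH_p(K_s)$ sends $\widetilde{M}_p^{s,t}$ onto $M_p^{s,t}$ with kernel $B_p(K_s)\subset\widetilde{N}_p^{s,t}$, so it descends to a surjection $\widetilde{M}/\widetilde{N}\twoheadrightarrow M/N = P_p^{s,t}$, which is an isomorphism by a dimension count using the decomposition. The $t=\infty$ case of $\phi_p^s$ is identical using $\widetilde{M}_p^{s,\infty}=(\phi_p^s)^{-1}(M_p^{s,\infty})$.

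Finally, commutativity $g_p\circ\phi_p=f_p$ is a direct substitution from the formulas: given $z\in\widetilde{M}_p^{s,t}$ (resp.\ $z\in Z_p(K_s)$), both sides output $\proj_{\widetilde{N}_p^{s,t,\perp}}(z)$ (resp.\ $\proj_{\widetilde{M}_p^{s,\infty,\perp}}(z)$). Well-definedness of $g_p$ on $P_p^{s,t}$ follows because two representatives differ by an element of $\widetilde{N}_p^{s,t}$, which the projection kills; and its bijectivity then follows from the 2-out-of-3 property in the commutative triangle, since $\phi_p$ and $f_p$ are already isomorphisms. The main obstacle, and essentially the only nontrivial step, is establishing the orthogonal decomposition $\widetilde{M}_p^{s,t}=B_p(K_s)\oplus\mathcal{M}_p^{s,t}$ cleanly; once that is in place everything reduces to a routine diagram chase with orthogonal projections.
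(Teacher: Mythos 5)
Your proof is correct, and it is essentially the same argument the paper has in mind; the paper's own proof is a one-line appeal to ``a standard isomorphism theorem,'' Lemma~\ref{lem:functorial}, Proposition~\ref{prop:functorial}, and routine checks, so it is terse rather than explicit. What you add is the organizing observation
\[
\widetilde{M}_p^{s,t}(\mathcal{F}) = B_p(K_s)\oplus\mathcal{M}_p^{s,t}(\mathcal{F}),\qquad
\widetilde{N}_p^{s,t}(\mathcal{F}) = B_p(K_s)\oplus\mathcal{N}_p^{s,t}(\mathcal{F}),
\]
(with the obvious $t=\infty$ variant), which is not stated in the paper but follows from Propositions~\ref{prop:f} and \ref{prop:harmonic-barcode} exactly as you argue: for $z$ in the preimage under $\phi_p^s$, split $z=\proj_{B_p(K_s)}(z)+\proj_{B_p(K_s)^\perp}(z)$ and use $\mathfrak{f}_p(K_s)(M_p^{s,t})=\mathcal{M}_p^{s,t}$; the reverse containment and orthogonality are as you say. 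That decomposition turns all the assertions of the proposition into direct computations -- you correctly get $\widetilde{M}_p^{s,t}\cap(\widetilde{N}_p^{s,t})^\perp=\mathcal{P}_p^{s,t}(\mathcal{F})$, so that $f_p$ is the projection with kernel exactly $\widetilde{N}_p^{s,t}$ (and image exactly $\mathcal{P}_p^{s,t}$ since every $w\in\mathcal{P}_p^{s,t}$ is fixed), $\phi_p$ descends by the first isomorphism theorem since $\ker(\phi_p^s|_{\widetilde{M}})\circ\text{quot} = \widetilde{N}_p^{s,t}$, and $g_p$ is then forced to agree with $f_p\circ\phi_p^{-1}$ once you check it is well defined (representatives of a class in $P^{s,t}_p$ differ by an element of $\widetilde{N}_p^{s,t}$, which the projection kills). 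The $t=\infty$ case goes through verbatim with $Z_p(K_s)=B_p(K_s)\oplus\mathcal{H}_p(K_s)$ and $\widetilde{M}_p^{s,\infty}$ in place of $\widetilde{N}$. This is a tidier and more self-contained presentation of the same underlying verification, and it buys you a reusable structural fact about the preimages $\widetilde{M},\widetilde{N}$; the paper buys brevity by simply deferring to the prior lemmas.
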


\begin{proof}
The fact that the map $\phi_p$ is an isomorphism follows from a standard isomorphism theorem for vector
spaces. The fact that the remaining maps are well defined, and are  isomorphisms,
and also the commutativity of the diagrams are easily checked using 
Lemma~\ref{lem:functorial}, Proposition~\ref{prop:functorial}, 
and the definitions of the various subspaces involved.
\end{proof}

\begin{definition}[Support of a chain]
\label{def:support}
For $z = \sum_{\sigma \in K^{\sqbracket{p}}}c_\sigma \cdot \sigma \in C_p(K)$, we denote 
$\supp(z) = \{\sigma \in K^{\sqbracket{p}} \mid c_\sigma \neq 0\}$, and call $\supp(z)$ the
\emph{the support of $z$}.
\end{definition}

The following definition subsumes the one in \cite{essential}.

\begin{definition}
\label{def:rep}
Let $b = (s,t, 1) \in \mathbf{B}_p(\mathcal{F})$ be a simple bar
of $\mathcal{F}$ (see Definition~\ref{def:barcode2}).
We define
\begin{eqnarray*}
\Rep(b) &=&  Z_p(K_s) \setminus 
\widetilde{M}^{s,\infty}_p(\mathcal{F})
\mbox{ if $t = \infty$} \\
&=& \widetilde{M}^{s,t}_p(\mathcal{F}) \setminus \widetilde{N}^{s,t}_p(\mathcal{F}) \mbox{ else}.
\end{eqnarray*}
We call $\Rep(b)$ \emph{the set of cycles representing the bar $b$}.
More precisely, 
for $z \in Z_p(K_s)$, $z \in \Rep(b)$, if and only if $z$ represents a non-zero element
in $P^{s,t}_p(\mathcal{F})$.
\end{definition}

\begin{definition}[The set of essential simplices associated to a simple bar]
\label{def:essential}
Let $b = (s,t, 1) \in \mathbf{B}_p(\mathcal{F})$ be a simple bar
of $\mathcal{F}$.
We define
$
\Sigma(b) = \bigcap_{z \in \Rep(b)} \supp(z).
$
We will call $\Sigma(b)$ \emph{the set of essential simplices of $b$}.
\end{definition}

\begin{definition}[Relative essential content]
\label{def:content}
Let $b = (s,t, 1) \in \mathbf{B}_p(\mathcal{F})$ be a simple bar
of $\mathcal{F}$.
For $z = \sum_{\sigma \in K^{\sqbracket{p}})} c_\sigma \cdot \sigma \in \Rep(b)$, we denote
$
\econt(z) = \left(\frac{\sum_{\sigma \in \Sigma(b)}  c_\sigma^2}{\sum_{\sigma \in K^{\sqbracket{p}}} c_\sigma^2}\right)^{1/2}.
$
We will call $\econt(z)$ \emph{the relative essential content of $z$}.
\end{definition}

\begin{definition}[Harmonic representative]
\label{def:harmonic-rep}
Given a simple bar $b = (s,t,1)$ we will call any non-zero cycle in $\mathcal{P}^{s,t}_p(\mathcal{F})$
\emph{a harmonic representative of $b$}.
\end{definition}

\begin{remark}
\label{rem:essential-empty}
\begin{figure}
    \centering
    \includegraphics[scale=0.50]{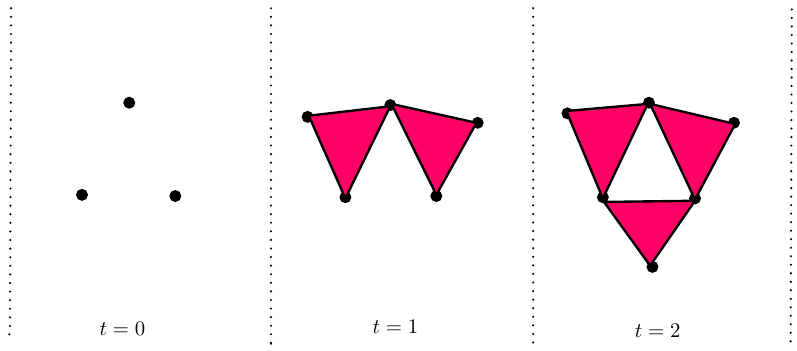}
    \caption{Empty set of essential edges}
    \label{fig:essential-empty}
\end{figure}
Note that the set of essential simplices of a bar can be empty. For example, the unique bar,
$(2,\infty,1) \in \mathbf{B}_1(\mathcal{F})$ of the filtration $\mathcal{F}$ shown in 
Figure~\ref{fig:essential-empty} has no essential simplices (edges). None of the edges 
are indispensable for obtaining a representative of the unique non-zero homology class in dimension one
that is born at $t=2$.
However, this situation cannot occur if the filtration is simplex-wise (see \cite{essential}). 
In that case the last simplex added
at the time a bar is created is always in the set of essential simplices of the bar. The filtration in Figure~\ref{fig:essential-empty} is not a simplex-wise filtration.
\end{remark}

\subsection{Relative essential content}
\label{subsec:harmonic-rep}

\begin{theorem}[Harmonic representatives maximize relative essential content]
\label{thm:essential}
Let $K$ be a finite simplicial complex and $\mathcal{F} = (K_t)_{t \in T}$ denote a filtration
of $K$. Suppose $p \geq 0$, and  
let $b = (s,t, 1) \in \mathbf{B}_p(\mathcal{F})$ be a simple bar.
Let $z_0$ be a harmonic representative of $b$. Then for any $z \in \Rep(b)$,
\[
\econt(z) \leq \econt(z_0).
\]
\end{theorem}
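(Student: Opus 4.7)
The plan is to reduce everything to a clean orthogonal decomposition of $\Rep(b)$ relative to the subspace that $z_0$ is orthogonal to. Let $W = \widetilde{N}^{s,t}_p(\mathcal{F})$ if $t \neq \infty$ and $W = \widetilde{M}^{s,\infty}_p(\mathcal{F})$ if $t = \infty$. By Proposition~\ref{prop:harmonic-rep}, the map $z \mapsto \proj_{W^\perp}(z)$ identifies the one-dimensional quotient with $\mathcal{P}^{s,t}_p(\mathcal{F})$, so every $z \in \Rep(b)$ may be written uniquely as
\[
z = c\,z_0 + w, \qquad c \in \R \setminus \{0\},\ w \in W,
\]
with $z_0 \in \mathcal{P}^{s,t}_p(\mathcal{F}) \subset W^\perp$ a fixed nonzero harmonic representative. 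This is the first thing I would record.

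Next I would prove the key combinatorial description: a simplex $\sigma \in K^{\sqbracket{p}}$ lies in $\Sigma(b)$ if and only if $\sigma \in W^\perp$ and $\langle z_0,\sigma\rangle \neq 0$. For the ``if'' direction, $\sigma \perp W$ forces $\langle z,\sigma\rangle = c\,\langle z_0,\sigma\rangle \neq 0$ for every $z = cz_0 + w \in \Rep(b)$. For the ``only if'' direction, if $\sigma \not\perp W$ then there exists $w' \in W$ with $\langle w',\sigma\rangle \neq 0$; given any $c \neq 0$, setting $w = -c\,\tfrac{\langle z_0,\sigma\rangle}{\langle w',\sigma\rangle}\,w'$ produces a representative $cz_0 + w \in \Rep(b)$ whose $\sigma$-coefficient vanishes, so $\sigma$ is not essential. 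This step is where I expect the real content to lie; the argument is elementary but needs the quantifiers to be handled carefully, and one must use that $\Rep(b)$ is the full set $\{cz_0 + w : c \neq 0,\ w \in W\}$ rather than a single coset.

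With that in hand, the inequality becomes a one-line computation. For any $z = cz_0 + w \in \Rep(b)$ and any $\sigma \in \Sigma(b)$, orthogonality gives $\langle z,\sigma\rangle = c\,\langle z_0,\sigma\rangle$, and Pythagoras gives $\|z\|^2 = c^2\|z_0\|^2 + \|w\|^2$. Therefore
\[
\econt(z)^2 \;=\; \frac{c^2\sum_{\sigma \in \Sigma(b)}\langle z_0,\sigma\rangle^2}{c^2\|z_0\|^2 + \|w\|^2} \;\leq\; \frac{\sum_{\sigma \in \Sigma(b)}\langle z_0,\sigma\rangle^2}{\|z_0\|^2} \;=\; \econt(z_0)^2,
\]
with equality precisely when $w = 0$ (i.e., when $z$ is itself a harmonic representative). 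Taking square roots yields the claim. The only nuance to double-check is that $z_0$ does share at least the right inner products with essential simplices (which is automatic, since $\Sigma(b)$ was defined in terms of every representative, $z_0$ included), so $\econt(z_0)$ is well defined and nonzero precisely when $\Sigma(b) \neq \emptyset$.
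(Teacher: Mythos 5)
Your proof is correct and rests on the same two pillars as the paper's: the characterization of the essential simplices $\Sigma(b)$ as precisely those standard basis vectors lying in $W^\perp$ and appearing with nonzero coefficient in a representative (the content of the paper's Lemma~\ref{lem:essential:2}), and a Pythagorean decomposition with respect to $W^\perp \oplus W$. Where you diverge is in the bookkeeping of the decomposition. The paper starts from an arbitrary $z \in \Rep(b)$, splits it as $z = z_1 + z_2$ with $z_1 = \sum_{\sigma \in \Sigma(b)} c_\sigma \sigma$ the restriction of $z$ to the essential simplices, shows $z_1 \in W^\perp$ and $z_1 \perp \proj_{W^\perp}(z_2)$, and compares with $z_0 = \proj_{W^\perp}(z) = z_1 + \proj_{W^\perp}(z_2)$, using that orthogonal projection does not increase length. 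You instead fix $z_0$ first, parametrize $\Rep(b) = \{cz_0 + w : c\neq 0,\ w \in W\}$, and compute $\econt$ directly; the paper's ``projection shrinks $z_2$'' step becomes the visibly nonnegative $\|w\|^2$ in your denominator. Your route is a bit tighter to write out and makes the equality case more transparent (modulo the degenerate $\Sigma(b) = \emptyset$ situation, which you flag), while the paper's version isolates the support characterization as a self-contained lemma. Two small points worth spelling out in a final write-up: first, the parametrization of $\Rep(b)$ uses that $z_0 \in V$, which holds because $W \subset V$ implies $\proj_{W^\perp}$ preserves $V$, so $\mathcal{P}^{s,t}_p(\mathcal{F}) = \proj_{W^\perp}(V) \subset V$ and $\dim V - \dim W = 1$ forces $V = \spanof(z_0) \oplus W$; second, in the only-if direction of your characterization, the clause $\langle z_0,\sigma\rangle \neq 0$ follows immediately from $\Sigma(b) \subset \supp(z_0)$, but you should state it rather than defer it to the closing sentence.
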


\begin{remark}
Note that Theorem~\ref{thm:essential} implies in particular that the relative essential contents
of any two harmonic representatives of a simple bar are equal. But this is clear also from the definition of the relative essential content and the fact that any two harmonic representatives of a simple bar are proportional.
\end{remark}

Before proving Theorem~\ref{thm:essential} we first prove a technical lemma.

\begin{lemma}
\label{lem:essential:2}
Let $V,W \subset Z_p(K)$ be subspaces  with $W \subset V$ 
with  $\dim V - \dim W =1$.
Let 
$
\Gamma =
\bigcap_{z \in V \setminus W} \supp(z)
$.
Then, for each  $z \in V \setminus W$
\begin{remunerate}
    \item 
    \label{itemlabel:lem:essential:2:1}
    $\Gamma \subset \supp(z)$;
    \item 
    \label{itemlabel:lem:essential:2:2}
    for each $\sigma \in \supp(z)$,  $\sigma \in \Gamma$ 
    if and only if 
    $\sigma \in W^\perp$.
\end{remunerate}
\end{lemma}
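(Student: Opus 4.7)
The plan is to exploit the hypothesis $\dim V - \dim W = 1$ by fixing a generator $z_0$ of the orthogonal complement of $W$ inside $V$: every $z \in V$ has a unique decomposition $z = w + \alpha z_0$ with $w \in W$ and $\alpha \in \R$, and the condition $z \in V \setminus W$ is exactly $\alpha \neq 0$. Throughout, for a simplex $\sigma \in K^{\sqbracket p}$, the coefficient of $\sigma$ in a chain $c$ is $\langle c, \sigma \rangle$ under the standard inner product, so $\sigma \in \supp(c)$ iff $\langle c, \sigma \rangle \neq 0$.

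Part \eqref{itemlabel:lem:essential:2:1} is immediate: for any $z \in V \setminus W$, the intersection defining $\Gamma$ includes the factor $\supp(z)$, hence $\Gamma \subset \supp(z)$.

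For Part \eqref{itemlabel:lem:essential:2:2}, fix $z \in V \setminus W$ and $\sigma \in \supp(z)$. For the forward implication, suppose, for contradiction, that $\sigma \in \Gamma$ but $\sigma \notin W^\perp$. Then there exists $w \in W$ with $\langle w, \sigma \rangle \neq 0$, and the chain
\[
z' = z - \frac{\langle z, \sigma \rangle}{\langle w, \sigma \rangle} \, w
\]
lies in $V$ and is not in $W$ (subtracting a vector in $W$ from a vector in $V \setminus W$ stays in $V \setminus W$), while $\langle z', \sigma \rangle = 0$ by construction. Hence $\sigma \notin \supp(z')$, contradicting $\sigma \in \Gamma$.

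For the reverse implication, assume $\sigma \in W^\perp$ and let $z = w + \alpha z_0$ be the decomposition above with $\alpha \neq 0$. Then $\langle z, \sigma \rangle = \langle w, \sigma \rangle + \alpha \langle z_0, \sigma \rangle = \alpha \langle z_0, \sigma \rangle$, so the assumption $\sigma \in \supp(z)$ forces $\langle z_0, \sigma \rangle \neq 0$. For any other $z' \in V \setminus W$, write $z' = w' + \alpha' z_0$ with $\alpha' \neq 0$; the same computation gives $\langle z', \sigma \rangle = \alpha' \langle z_0, \sigma \rangle \neq 0$. Hence $\sigma \in \supp(z')$ for every $z' \in V \setminus W$, which is exactly $\sigma \in \Gamma$. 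The proof is essentially bookkeeping on this one-parameter family; no serious obstacle is expected.
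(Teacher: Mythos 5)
Your proposal is correct. Part~(1) and the forward implication of Part~(2) follow the paper's argument essentially verbatim (the substitution $z' = z - \frac{\langle z,\sigma\rangle}{\langle w,\sigma\rangle}w$ is exactly the paper's construction of $z''$). The reverse implication is where you diverge: the paper argues by contradiction, assuming some $z'' \in V\setminus W$ with $\sigma \notin \supp(z'')$ and exhibiting a scalar $c \neq 0$ making $z + cz'' \in W$, which has $\sigma$ in its support and thus cannot be orthogonal to $\sigma$. You instead fix a unit generator $z_0$ of the one-dimensional space $V \cap W^\perp$, write every $z' \in V\setminus W$ as $w' + \alpha' z_0$ with $\alpha' \neq 0$, and compute directly that $\langle z', \sigma\rangle = \alpha'\langle z_0, \sigma\rangle$, so that membership of $\sigma \in W^\perp$ in $\supp(z')$ is independent of the choice of $z'$. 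Both arguments hinge on the codimension-one hypothesis; yours makes explicit \emph{why} all elements of $V\setminus W$ have the same support over $W^\perp$ (they all see $\sigma$ through $z_0$), whereas the paper's contradiction is shorter but less transparent. Either is fine, and yours arguably gives a cleaner mental picture of $\Gamma \cap W^\perp$ as $\supp(z_0) \cap W^\perp$.
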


\begin{proof}
Part~\eqref{itemlabel:lem:essential:2:1} is clear. 

We now prove Part~\eqref{itemlabel:lem:essential:2:2}.
Let $z \in V \setminus W$,  and $\sigma_0 \in \supp(z)$.

\begin{remunerate}
\item
$\sigma_0 \in \Gamma \Rightarrow \sigma_0 \in W^\perp$:
Suppose  that $\sigma_0 \in \Gamma$ and  that 
$
\sigma_0 \not\in W ^\perp.
$
Then, there exists  
$
z'  \in W,
$
with $\la\sigma_0,z'\ra \neq 0$.
Otherwise, $\sigma_0$ is orthogonal to every cycle in 
$W$, and and hence $\sigma_0 \in W^\perp$.
Let
$
z'' = z - \frac{\la \sigma_0,z\ra}{\la\sigma_0,z'\ra} z'.
$
Then,
$z'' \in V \setminus W$ (otherwise $z \in W$)
but  $\sigma_0 \not\in \supp(z'')$.
This contradicts the fact that $\sigma_0 \in \Gamma$.
Hence,
$
\sigma_0 \in W^\perp
$.
\item
$\sigma_0 \in W^\perp \Rightarrow \sigma_0 \in \Gamma$:
Let
$
\sigma_0 \in W^\perp,
$
and let $z'' \in V\setminus  W$. Suppose that 
$\sigma_0 \not\in \supp(z'')$.

Since $\dim V - \dim W =1$, there exists $c \in \R, c\neq 0$, such that
$z + c z'' \in W$.

Then, 
$\sigma_0 \in \supp(z+c z'')$, but $z+ c z'' \in W$.
This contradicts the fact that 
$
\sigma_0 \in W^\perp
$.

So, $\sigma_0 \in \supp(z'')$ for every $z'' \in V \setminus W$. Hence, $\sigma_0 \in \Gamma$.
\end{remunerate}
\end{proof}

\begin{proof}[Proof of Theorem~\ref{thm:essential}]
There are two cases.

If $t = \infty$, set 
$
V = Z_p(K_s),
W = \bigcup_{s \leq t} \widetilde{M}^{s,t}_p(\mathcal{F}),
\Gamma = \Sigma(b).
$

If $t \neq  \infty$, set 
$
V = \widetilde{M}^{s,t}_p(\mathcal{F}), 
W = \widetilde{N}^{s,t}_p(\mathcal{F}), 
\Gamma = \Sigma(b).
$

Let 
$
z = \sum_{\sigma \in K^{\sqbracket{p}}} c_\sigma \cdot \sigma \in \Rep(b),
$,
$
z_1 = \sum_{\sigma \in \Sigma(b)} c_\sigma \cdot \sigma, 
$
and 
$
z_2 = z - z_1
$.
Using Lemma~\ref{lem:essential:2}, $z_1 \in W^\perp$. Clearly $z_1 \perp z_2$.
Also using Lemma~\ref{lem:essential:2} we have that
for each $\sigma \in \Sigma(b) = \supp(z_1)$,
$\sigma \in W^\perp$, 
from which it follows that 
$
\sigma \not\in \supp(\proj_W(z_2))
$.
Since it is clear from the definition of $z_2$, that
for each $\sigma \in \Sigma(b) = \supp(z_1)$, 
$
\sigma \not\in \supp(z_2)
$,
we obtain that
for each $\sigma \in \Sigma(b) = \supp(z_1)$,
\begin{equation}
\label{eqn:lem:essential:2:2.5}
\sigma \not\in \supp(z_2 - \proj_W(z_2)) = \supp(\proj_{W^\perp}(z_2)).
\end{equation}
In particular, this implies that
\begin{equation}
\label{eqn:lem:essential:2:3}
    z_1 \perp \proj_{W^\perp}(z_2).
\end{equation}

Let
$z_0 = \proj_{W^\perp}(z)$. Then, using Proposition~\ref{prop:harmonic-rep},
$z_0$ is a harmonic representative of $b$.

Moreover,
\begin{eqnarray*}
z_0 &=& \proj_{W^\perp}(z_1 + z_2) \\
&=& \proj_{W^\perp}(z_1) + \proj_{W^\perp}(z_2) \\
&=& z_1 + \proj_{W^\perp}(z_2),
\end{eqnarray*}
and \eqref{eqn:lem:essential:2:2.5} and \eqref{eqn:lem:essential:2:3} imply that
\begin{eqnarray}
\label{eqn:lem:essential:2:4}
||z_0||^2 &=& ||z_1||^2 + || \proj_{W^\perp}(z_2)||^2, \\
\label{eqn:lem:essential:2:5}
\econt(z_0)  &=&
\frac{||z_1||}{||z_0||}.
\end{eqnarray}

Hence,
\begin{eqnarray*}
\econt(z)^2 &=& \frac{||z_1||^2}{||z_1||^2 + ||z_2||^2} \\
&\leq& \frac{||z_1||^2}{||z_1||^2 + ||\proj_{W^\perp}(z_2)||^2} \mbox{ (projection does not increase length)} \\
&=& \frac{||z_1||^2} {||z_1+\proj_{W^\perp}(z_2)||^2}  \mbox{ (using \eqref{eqn:lem:essential:2:3})}\\
&=& \frac{||z_1||^2} {||z_0||^2} \mbox{ (using \eqref{eqn:lem:essential:2:4})}\\
&=& \econt(z_0)^2 \mbox{ (using  \eqref{eqn:lem:essential:2:5})}.
\end{eqnarray*}
This completes the proof of the theorem.
\end{proof}

\section*{Acknowledgement}
We thank the referees for carefully reading the manuscript and making valuable suggestions. We also thank Aziz Burak Guelen
for a careful reading and pointing out an error in the formulation of Proposition~\ref{prop:tilde-P} in a previous version of the paper.
The authors were partially supported by NSF grants CCF-1910441 and CCF-2128702.
\bibliographystyle{amsplain}
\bibliography{bibliography}

\end{document}